\documentclass[12pt]{amsart}
\usepackage{amsmath,amssymb,amsthm,enumerate}
\usepackage[mathscr]{euscript}
\setlength{\oddsidemargin}{0.25in}
\setlength{\evensidemargin}{0.25in}
\setlength{\textwidth}{6in}
\setlength{\textheight}{9in}
\setlength{\topmargin}{0in}
\setlength{\parskip}{12pt}


\newcommand{\Res}{\mathrm{Res}}
\newcommand{\Ind}{\mathrm{Ind}}
\newcommand{\R}{\mathcal{R}}
\newcommand{\U}{\mathcal{U}}

\newcommand{\SL}{\mathrm{SL}}
\newcommand{\PGL}{\mathrm{PGL}}
\newcommand{\GL}{\mathrm{GL}}

\newcommand{\HH}{\mathcal{H}}

\newcommand{\G}{\mathcal{G}}
\newcommand{\Sres}{\mathcal{S}}
\newcommand{\GG}{\mathbb{G}}
\newcommand{\NN}{\mathbb{N}}
\newcommand{\BB}{\mathbb{B}}
\newcommand{\MM}{\mathbb{M}}

\newcommand{\lrc}[1]{\lceil #1 \rceil}
\newcommand{\lrf}[1]{\lfloor #1 \rfloor}

\newcommand{\T}{\mathcal{T}}

\newcommand{\Stor}{\mathbb{S}}
\newcommand{\ratk}{k}
\newcommand{\resk}{\kappa}

\newcommand{\extresk}{\kappa'}

\newcommand{\PP}{\mathcal{P}}
\newcommand{\p}{\varpi}
\newcommand{\val}{\mathrm{val}}
\newcommand{\real}{\mathbb{R}}
\newcommand{\Rplus}{\widetilde{\mathbb{R}}}
\newcommand{\A}{\mathscr{A}}
\newcommand{\build}{\mathscr{B}}

\newcommand{\cind}{\textrm{c-}\mathrm{Ind}}

\newcommand{\Hom}{\mathrm{Hom}}

\newcommand{\smat}[1]{\left[ \begin{smallmatrix} #1 \end{smallmatrix} \right]}


\newcommand{\lconj}[2]{{}^{#1}#2}      

\newcommand{\Para}{\mathcal{P}}
\newcommand{\B}{\mathcal{B}}


\theoremstyle{plain}
\newtheorem{theorem}{Theorem}[section]

\newtheorem{lemma}[theorem]{Lemma}
\newtheorem{proposition}[theorem]{Proposition}
\newtheorem{corollary}[theorem]{Corollary}

\theoremstyle{definition}
\newtheorem{definition}[theorem]{Definition}
\newtheorem{remark}[theorem]{Remark}

\theoremstyle{remark}
\newtheorem{example}{Example}

\numberwithin{equation}{section}

\begin{document}

\title[On depth-zero branching rules]{On Branching Rules of Depth-Zero Representations}
\author{Monica Nevins}
\address{Department of Mathematics and Statistics, University of Ottawa, Ottawa, Canada K1N 6N5}
\email{mnevins@uottawa.ca}
\thanks{This research is supported by a Discovery Grant from NSERC Canada.}
\subjclass{20G05}
\date{\today}

\begin{abstract}
Using Bruhat-Tits theory, we analyse the restriction of depth-zero representations of a semisimple simply connected $p$-adic group $G$ to a maximal compact subgroup $K$.  We prove the coincidence of branching rules within classes of Deligne-Lusztig supercuspidal representations.  Furthermore, we show that under obvious compatibility conditions, the restriction to $K$ of a Deligne-Lusztig supercuspidal representation of $G$ intertwines with the restriction of a depth-zero principal series representation in infinitely many distinct components of arbitrarily large depth.  Several qualitative and quantitative results are obtained, and their use is illustrated in an example.

\end{abstract}

\maketitle

\setlength{\parskip}{6pt}

\section{Introduction}

The branching rules considered here are those arising from the restriction of a complex admissible representation of a $p$-adic group $G$ to a maximal compact open subgroup $K$.  The ultimate goal of this analysis is to examine the interplay between the admissible duals of $G$ and $K$, as well as to illuminate their respective structures.  Aspects of this question for $G$ include the theory of types and the study of newforms.  On the other hand, the representation theory of $K$ is still in its infancy, and branching rules provide a framework in which to search for results. 

In this paper, we consider the restriction of certain depth-zero supercuspidal representations (those induced from inflations of Deligne-Lusztig cuspidal representations of associated finite groups of Lie type) to a hyperspecial maximal compact subgroup (denoted $G_y$) under the hypothesis that $G$ is connected, simply connected, semisimple and split over a local non-archimedean ground field $\ratk$ of odd residual characteristic.   
Our particular focus is the set of representations of $G_y$ which are common to the branching rules of several different representations of $G$.  We may call these components ``atypical'' to distinguish them from the more special ``types,'' in the sense of Bushnell-Kutzko or Moy-Prasad, that in some cases uniquely identify the irreducible representation of $G$ which contain them.    To this end we prove two main results.

The first concerns Deligne-Lusztig supercuspidal representations (recalled in Section~\ref{S:DL}).  We parametrize their coarse decomposition into Mackey components by a set $X_{x,y}^+$ in Sections~\ref{S:doublecosets} and \ref{S:restriction}.  In Theorem~\ref{T:same} we prove that whenever two Deligne-Lusztig supercuspidal representations arise from the same minisotropic torus and have the same central character, then a large portion of their branching rules are identical, namely, those parametrized by $int(X_{x,y}^+)$.  Moreover, we show that in certain circumstances the entirety of their restrictions to $G_y$ are identical (Corollary~\ref{C:allsame}).

The second main result, stated in Theorem~\ref{T:main}, concerns the intertwining between restrictions of Deligne-Lusztig supercuspidal representations and principal series.  We prove that the restriction of a Deligne-Lusztig supercuspidal representation $\pi$ intertwines in infinitely many distinct components with any compatible depth-zero principal series representation $\Ind_B^G\chi$.  The compatibility condition relates to the central character of the cuspidal representation inducing to $\pi$.   Further refinements of this result, relating to the depths at which these intertwinings occur, are given as a sequence of corollaries in Section~\ref{S:intertwining}.

One of the main methods underlying the proofs of these results, beyond Mackey theory, is the analysis of subgroups of $G$ which are stabilizers of subsets of an apartment $\A$.  A result of independent interest is Proposition~\ref{P:gxromega}, where we relate certain stabilizer subgroups with Moy-Prasad filtration subgroups.  We use this in Theorem~\ref{T:depth} and Proposition~\ref{P:depthps} to glean information about the depths of the representations of $G_y$ which arise.

Proving results on branching rules at this level of generality is a new and novel step, and anticipates the development of a general theory out of the case-by-case analysis achieved to date.  In this sense the current work complements a series by the author on branching rules of $\SL(2,k)$ \cite{Nevins2005, Nevins2011, Nevins2013} and, with P.~Campbell, $\GL(3,k)$ \cite{CampbellNevins2009,CampbellNevins2010}.  Recently, U.~Onn and P.~Singla in \cite{OnnSingla2014} determined the complete decomposition into irreducible representations of the blocks of representations of \cite{CampbellNevins2009}.  We use their results in our example in Section~\ref{S:example} and anticipate that in fact the complete branching rules for depth-zero representations of $\GL(3,k)$ are now attainable, using the above results and ideas inspired from the present paper.  

More generally, G.~Savin determined the branching rules of the minimal representation of split simply connected groups of types $D_n$ ($n\geq 4$) and $E_n$ ($6\leq n\leq 8$) in \cite{Savin1996}.  For these representations $V$, each $V^{G_{y,i+1}}/V^{G_{y,i}}$ is irreducible, similar to the case of (all irreducible representations of) $GL(2,\ratk)$ and $\PGL(2,\ratk)$; \cite{Savin1996} makes clever use of the Harish-Chandra--Howe character formula to identify and explicitly construct these irreducible components.

We assume that $G$ is semisimple; this simplifies the exposition, particularly in Section~\ref{S:background}.  It is feasible and would be interesting to extend the results to $G$ reductive, so that the Levi components of the proper parabolic subgroups are also in this class.  This could allow an inductive analysis of branching rules including all parabolically induced representations.

Our proofs of the main results Theorem~\ref{T:same} and  Theorem~\ref{T:main} rely on showing that certain double cosets support nonzero intertwining operators.  These questions reduce to computations with Deligne-Lusztig characters.
To determine which other double cosets also support intertwining operators would seem to require restricting representations to subgroups which are stabilizers of subsets of the Bruhat-Tits building not contained in any single apartment, and there is currently a dearth of literature on such subgroups.  Moreover, the classification of the double coset spaces which arise is expected to be highly nontrivial: for $\GL(n,k)$, $n \geq 3$, it was shown by  U.~Onn, A.~Prasad and L.~Vaserstein in \cite{OnnPrasadVaserstein2006} to contain a wild classification problem in the limit.

The results of \cite{Nevins2011}, \cite{Savin1996} suggest that we should expect all atypical components of depth zero supercuspidal representations to occur in compatible principal series representations, but this is as yet unanswerable with the methods of Section~\ref{S:intertwining}.  To explore this question in an example we use instead explicit realizations of some atypical supercuspidal components of $SL(3,\ratk)$ in Section~\ref{S:example}.

The characters of Deligne-Lusztig cuspidal representations have a uniform description and are well-known; we make use of these in several computations.   It would be useful to extend our results to other families of cuspidal representations, when they exist.  For example, in $\SL(2,\ratk)$, the non-Deligne-Lusztig cuspidal representations give all atypical \emph{irreducible} positive-depth components of all representations \cite{Nevins2013}.  In general we expect that these (smaller) cuspidal representations can be used to define a finer decomposition of the Mackey components of the Deligne-Lusztig supercuspidal representations. 

An eventual goal is the complete decomposition of supercuspidal or principal series representations into irreducible $G_y$-representations.  
As we see in Section~\ref{S:restriction}, this would imply describing the branching rules for the (simple) restriction of cuspidal representations  to a parabolic subgroup.  In Section~\ref{S:DL} we relate this in the Deligne-Lusztig case to questions about the intersection of minisotropic tori with split Levi subgroups. These are interesting open problems in the representation theory of finite groups of Lie type which have as yet been solved only in special cases using CHEVIE~\cite{CHEVIE}, for example.

\noindent {\bf Outline.} In Section~\ref{S:background} we provide a survey of the background required, including several results from Bruhat-Tits theory.  
In Section~\ref{S:stabilizer} we present various properties of pointwise stabilizers of bounded subsets of an apartment, and prove that with few exceptions,  the  Moy-Prasad filtration subgroups are just stabilizer subgroups of certain convex subsets, up to a toral factor.  Section~\ref{S:doublecosets} is devoted to determining a set of double coset representatives $X_{x,y}^+$ indexing the Mackey components of the supercuspidal representations of $G$ for a special vertex $y$ and any vertex $x$, and describing the structure of this set.

In Section~\ref{S:restriction} we prove general results about the restriction of any depth-zero supercuspidal representation of $G$ to a (hyper)special maximal compact subgroup $G_y$.  In Section~\ref{S:DL} we specialize to the case of Deligne-Lusztig representations, proving the coincidence of a large part of their branching rules.  We address principal series representations, proving their extensive intertwining over $G_y$ with Deligne-Lusztig supercuspidal representations, in Section~\ref{S:intertwining}.  We conclude in Section~\ref{S:example} with an example illustrating the use of the many related results in this paper for the group $G=\SL(3,\ratk)$.

\noindent {\bf Acknowledgments.} This research was conducted during a wonderful visiting year at l'Institut de 
Math\'ematiques et Mod\'elisation de Montpellier, Universit\'e de Montpellier II, at the invitation of Ioan Badulescu.  This work also flourished through conversations with Anne-Marie Aubert, Corinne Blondel and C\'edric Bonnaf\'e, and later with Loren Spice, Jeff Adler and Fiona Murnaghan.  Several corrections, improvements and extensions were recommended by the anonymous referee, including particularly the discussion of Gelfand-Kirillov dimension in Section~\ref{S:restriction} and of the relative rarity of the special anisotropic tori of Corollary~\ref{C:allsame}.  It is a pleasure to thank all these people.

\setlength{\parskip}{12pt}

\section{Background: Summary} \label{S:background}

The main references for the background material in this section are \cite{BruhatTits1972,Tits1979}.

\subsection{Notation and conventions} \label{SS:Notation}
Let $\ratk$ be a local non\-arch\-i\-me\-de\-an field of residual characteristic $p\neq 2$.  Its characteristic may be $0$ or $p$.  Its residue field $\resk$ is a finite field of order $q$.
For the sake of brevity we will refer to our field as a $p$-adic field and our group as a $p$-adic group.

Let the integer ring of $\ratk$ be $\R$ and its maximal ideal $\PP$.  Let $\p$ be a uniformizer, and normalize the valuation on $\ratk$ so that $\val(\p)=1$.  
The units of $\R$ admit a filtration by subgroups $U_n$ where $U_0 = \R^\times$ and $U_n = 1+\PP^n$ if  $n>0$.


Given a subgroup $H$ of a group $G$ we denote its center by $Z(H)$ and for any $g\in G$ write $\lconj{g}{H}$ for the group $gHg^{-1}$.  Whenever defined, a representation $(\sigma,V)$ of $H$ is smooth and $V$ is a complex vector space.  We write $V^H$ for the fixed points of $H$ on $V$.  If $g\in G$ then we write $\lconj{g}{\sigma}$ for the corresponding representation of $\lconj{g}{H}$.  Whenever defined, the group $G$ acts on the normalized induced representation $\Ind_H^G \sigma$, or the compactly induced representation $\cind_H^G\sigma$, by right translation.


Define $\Rplus = \real \cup (\real+) \cup \{\infty\} $ as in \cite[6.4.1]{BruhatTits1972}.  For $r\in \real$ we denote by $\lrc{r}$ the least integer $k$ satisfying $k \geq r$ and $\lrc{r+}$ the least integer $k$ with $k>r$.  For $r\in \real$ we also set $\lrf{r} = -\lrc{-r}$.

\subsection{Structure theory} \label{SS:structuretheory}
Let $\GG$ be a connected, simply connected, semisimple algebraic group which is defined and split over $\ratk$.  We write $G = \GG(\ratk)$.  Let $\Stor$ be a maximal torus of $\GG$, split over $\ratk$, and denote the associated root system $\Phi$. Choose positive roots $\Phi^+ \subset \Phi$ and simple roots $\Delta \subseteq \Phi^+$.  Let $\BB$ be the Borel subgroup of $\GG$ defined by $(\Stor,\Phi^+)$ and  $\NN$  the normalizer of $\Stor$ in $\GG$.  We set $S = \Stor(\ratk)$, $B = \BB(\ratk)$ and $N = \NN(\ratk)$.  The corresponding finite Weyl group is $W_0 = N/S$.

Denote by $X_\ast(S)= \Hom_\ratk(\mathbb{G}_m, \Stor)$ the group of $\ratk$-rational cocharacters of $\Stor$, and  $X^\ast(S)= \Hom_\ratk(\Stor,\mathbb{G}_m)$ the group of $\ratk$-rational characters.  Set $S_0 = \{ t \in S \mid \forall \chi \in X^\ast(S), \val(\chi(t))=0 \}$; this is the maximal compact subgroup of $S$.

For each $\alpha\in \Phi \subseteq X^\ast(S)$ we denote by $\alpha^\vee \in \Phi^\vee \subset X_\ast(S)$ the corresponding coroot.   Since $\GG$ is simply connected the lattice $X_\ast(S)$ is spanned by $\Phi^\vee$.

Denote by $\A = \A(\GG, \Stor, \ratk)$ the apartment corresponding to $(\GG,\Stor, \Phi, \ratk)$, which we think of as the affine space under $E=X_\ast(S) \otimes_{\mathbb{Z}} \mathbb{R}$. 
The set of affine roots $\Phi_{af}$ is the set of affine functions $\{\alpha_m = \alpha + m \mid \alpha \in \Phi, m \in \mathbb{Z}\}$ on $\A$; $\alpha$ is the gradient of $\alpha_m$.  The set of hyperplanes $\{\beta=0 \mid \beta \in \Phi_{af}\}$ define the walls of a polysimplicial complex structure on $\A$.
Let $D$ denote the \emph{positive cone} $\{x \in \A \mid \forall \alpha \in \Phi^+, \alpha(x) >0\}$ and let $C$, the \emph{fundamental chamber}, be the unique chamber (also called alcove) in $D$ containing $0\in E$ in its closure.

The affine Weyl group $W$ is generated by the affine reflections $r_\beta$ for $\beta \in \Phi_{af}$, where $r_\beta$ denotes the reflection in the hyperplane $\beta=0$.  We have $W\cong X_\ast(S) \rtimes W_0$; since $\GG$ is simply connected, we also have $W \cong N/S_0$ (called the extended affine Weyl group, in more general settings).  Here, $W_0$ acts as the stabilizer of $0\in E$ and $X_\ast(S)$ acts by translations.  For each $\ell \in X_\ast(S)$ let $t(\ell)\in W$ be its representative in $W$, which we identify with an element of $S \subset N$ when appropriate.
For each $w\in W$ and $\ell \in  X_\ast(S)$ we have $wt(\ell)w^{-1}=t(w\ell)$.

For any $x\in \A$, set $\Phi_x = \{ \beta \in \Phi_{af} \mid \beta(x)=0 \}$
and
$W_x = \langle r_{\beta} \in W \mid \beta \in \Phi_x \rangle$.  Let $\Phi_x^{lin}$ be the set of gradients of elements of $\Phi_x$; since $\GG$ is split over $\ratk$ this is itself a root system.  Choose a base $\Delta_x$ of $\Phi_x^{lin}$ so that the positive roots $\Phi_x^{lin,+}$ coincide with $\Phi_x^{lin} \cap \Phi^+$.  Let $W_x^{lin} \subset W_0$ be the subgroup generated by the linear reflections in elements of $\Phi_x^{lin}$.  
Then the map of $W_x^{lin}$ into $W_x$ given by
\begin{equation} \label{E:wlin}
w \mapsto  t(x-wx)\; w = w \; t(w^{-1}x-x)
\end{equation}
is a group isomorphism.  If $W_x^{lin} = W_0$, then the point $x$ is a vertex and is called a \emph{special vertex} \cite[1.9]{Tits1979}; not all vertices of $\A$ are special in general.  Since $\GG$ is split over $\ratk$, $x$ is special if and only if $\alpha(x)\in \mathbb{Z}$ for all $\alpha \in \Phi$.

\subsection{Filtrations and special subgroups}

Following \cite{MoyPrasad1994}, we associate to each $x \in \A, \alpha \in \Phi$ and $r \in \Rplus$ a subgroup $\GG_\alpha(\ratk)_{x,r}$ of the corresponding root subgroup and, for $r \geq 0$, the subgroup $S_r$ of $S$.   Then the \emph{Moy-Prasad filtration group} at $x$ for $r \in \Rplus_{\geq 0}$ is 
$$
G_{x,r}= \langle S_r,\GG_\alpha(\ratk)_{x,r} \mid \alpha \in \Phi\rangle.
$$
By a choice of pinning we may assume $\GG_\alpha(\ratk)_{x,r} = \GG_\alpha(\PP^{\lrc{r-\alpha(x)}})$ and $S_r = \Stor(U_{\lrc{r}})$.
Note that given $\ell \in X_\ast(S)$, we have
$$
\lconj{t(\ell)}{\GG_\alpha(\ratk)_{x,r}} = \GG_\alpha(\ratk)_{x+\ell,r}= \GG_\alpha(\ratk)_{x,r-\alpha(\ell)}.
$$

Let $\build = \build(\GG,\ratk)$ denote the (reduced) Bruhat-Tits building for $\GG$ over $\ratk$ as in   \cite[7.4.1]{BruhatTits1972}.  
Given any point $y \in \build$, there exist $g\in G$ and $x\in \A$ such that $y=g\cdot x$.  For any $r \in \Rplus_{\geq 0}$, one defines $G_{y,r} := \lconj{g}{G_{x,r}}$; this is independent of choices \cite{MoyPrasad1994}.    Since $\GG$ is semisimple and simply connected, for any $x\in \build$, $G_{x,0}$ coincides with the stabilizer $G_x$ of $x$ in $G$ \cite[\S 3.1]{Tits1979} and is the parahoric subgroup of $G$ associated to $x$.  If $x$ is in an (open) alcove $\Gamma$ then $G_x$ is called an Iwahori subgroup.

In our setting, the 
maximal compact open subgroups of $G$ are exactly the stabilizers of vertices of $\build$.
If $x$ is a special vertex, then  $G_x$ is a good maximal compact subgroup, in the sense that $G$ admits decompositions $G=G_x S G_x$ (Cartan decomposition) and $G = G_xB$ (Iwasawa decomposition). 

Given any $x\in \build$ the group $G_{x,+}:= G_{x,0+}$ is the unipotent radical of the parahoric subgroup $G_x$.
The quotient group $G_{x}/G_{x,+}$ is the group of $\resk$-points of a connected reductive group $\MM_x$ defined over $\resk$ (as in \cite{MoyPrasad1994}).  
Set $\Sres := \Stor(\resk) \subseteq \MM_x(\resk)$.
If $x$ is a hyperspecial vertex (as defined in \cite[1.10]{Tits1979}) then  $\MM_x=\GG$.  Since in our setting $\GG$ is split over $\ratk$, hyperspecial vertices  exist and coincide with the special vertices.  

The maximal compact subgroups which are stabilizers of hyperspecial vertices are distinguished among all maximal compact subgroups in two ways.  First, from their definition it follows that they are isomorphic to $\GG(\R)$. Secondly, they have maximal volume from among all maximal compact open subgroups 
\cite[3.8]{Tits1979}.   In this paper we choose to restrict to a maximal compact subgroup which is the stabilizer of a (hyper)special vertex, always denoted $y$.

To reduce notational burden, we write $\G_x = G_{x}/G_{x,+}$ for $\MM_x(\resk)$ and refer to parabolic subgroups ($\Para$ and $\B$) and tori ($\T$) of $\G_x$ without reference to the algebraic group $\MM_x$.  This is unfortunate in one case arising in Section~\ref{S:DL}; let us define the needed terms here.  Let $s \in \G_x$ be semisimple and let $\mathbb{C}_s$ denote its centralizer, which is a reductive subgroup of $\MM_x$, and $\mathbb{C}_s^\circ$ its connected component subgroup.  This coincides with $\mathbb{C}_s$ if $\MM_x$ is simply connected \cite{Carter1985}. We define $C_{\G_x}^\circ(s)= \mathbb{C}_s^\circ(\resk)$.  Note that if $s \in Z(\G_x)$ then $\mathbb{C}_s^\circ=\MM_x$ and so $C_{\G_x}^\circ(s)=\G_x$.

\subsection{Representations of $G$}

Given an irreducible admissible representation $\pi$ of $G$ on a complex vector space $V$, the \emph{depth} of $\pi$ is a rational number defined as the least $r\in \real_{ \geq 0}$ such that there exists $x\in \build(\GG,\ratk)$ for which $V$ contains vectors invariant under $G_{x,r+}$ \cite{MoyPrasad1994}. 
 Where appropriate, we also refer to the depth of a representation of $G_x$, for fixed $x$.  If $x$ is a special vertex then the depth of any representation of $G_x$ is a nonnegative integer.

By Jacquet's theorem, every irreducible admissible representation of $G$ occurs as a subrepresentation of $\Ind_P^G \sigma$, for some parabolic subgroup $P$ with Levi decomposition $MN$ and supercuspidal representation $\sigma$ of $M$ (extended trivially across $N$).  In case $P=B$, a Borel subgroup, the representation $\sigma$ is simply a character $\chi$ of a split torus $S$ and the representation $\Ind_B^G \chi$, which may fail to be irreducible, is called a principal series representation.

The classification of (irreducible) supercuspidal representations is not yet complete.  It is a lasting conjecture, proven now in many cases, that all supercuspidal representations of depth $r$ are compactly induced from a compact open subgroup. 
 In case $r=0$ this has been proven; more precisely L.~Morris \cite{Morris1999} and A.~Moy and G.~Prasad \cite{MoyPrasad1996} proved that all depth-zero supercuspidal representations of $G$ are given by
\begin{equation} \label{E:supercuspidal}
\pi= \cind_{G_x}^G \tau
\end{equation}
for some vertex $x\in \build$ and inflation $\tau$ of a cuspidal representation of $\G_x$.  Among these cuspidal representations $\tau$ are the Deligne-Lusztig cuspidal representations, whose characters are well-known; see Section~\ref{S:DL}.

\section{Stabilizers of subsets of $\A$} \label{S:stabilizer}

Let $\Omega$ be a bounded subset of $\build$.  Its convex closure
 $\overline{\Omega}$ is the union of the closures of all the facets of $\build$ meeting $\Omega$.  
The pointwise stabilizer of $\Omega$ is $G_\Omega= \cap_{x \in \Omega} G_x$ and it coincides with $G_{\overline{\Omega}}$ \cite[Prop 2.4.13]{BruhatTits1972}.
Given two points $x,y\in \build$, we have $G_x \cap G_y = G_{[x,y]}$, where $[x,y]$ is the unique geodesic joining $x$ and $y$, which is a line in any apartment containing both points \cite[Prop 2.5.4]{BruhatTits1972}. 
 From these facts one concludes that if $F$ is a facet such that $[x,y] \cap F \neq \emptyset$, then $G_{[x,y]} \subseteq G_F$.

F.~Bruhat and J.~Tits give the following description of $G_\Omega$ if $\Omega \subseteq \A$ \cite[\S 6.4]{BruhatTits1972}.  

\begin{proposition}\label{P:stabilizer}
Suppose $\Omega$ is a bounded subset of $\A$.  For each $\alpha \in \Phi$, define 
$$
f_\Omega(\alpha) = \max\{ \lrc{-\alpha(x)} \mid x \in \Omega\}.
$$  
Then $G_\Omega = S_0U_\Omega$ where $U_\Omega = \langle \GG_\alpha(\PP^{f_\Omega(\alpha)}) \mid \alpha \in \Phi \rangle$.  Furthermore, if $\Omega$ contains an open set of $\A$ then for any order on $\Phi$ the product map
\begin{equation} \label{E:stabilizer}
S_0 \times \prod_{\alpha \in \Phi}\GG_\alpha(\PP^{f_\Omega(\alpha)}) \to G_\Omega
\end{equation}
is a bijection.
\end{proposition}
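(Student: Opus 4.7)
\emph{Proof plan.} The plan is to reduce everything to the single-point case, where the statement is the classical Iwahori-type product decomposition of a parahoric subgroup from \cite[\S 6.4]{BruhatTits1972}: for each $x \in \A$, one has $G_x = S_0 \prod_{\alpha \in \Phi} \GG_\alpha(\PP^{\lrc{-\alpha(x)}})$, in any fixed order on $\Phi$, and the associated product map from $S_0 \times \prod_\alpha \GG_\alpha(\PP^{\lrc{-\alpha(x)}})$ onto $G_x$ is a bijection. Since $G_\Omega = \bigcap_{x \in \Omega} G_x$, the task is to understand how these single-point factorizations behave under intersection across $\Omega$, and the formula $f_\Omega(\alpha) = \max_{x \in \Omega} \lrc{-\alpha(x)}$ is exactly what records the common depth of the $\alpha$-factor.

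The easy containment $S_0 U_\Omega \subseteq G_\Omega$ comes first: since $f_\Omega(\alpha) \geq \lrc{-\alpha(x)}$ for every $x \in \Omega$, each generator $\GG_\alpha(\PP^{f_\Omega(\alpha)})$ of $U_\Omega$ is a subgroup of the $\alpha$-root factor of $G_x$, and $S_0 \subseteq G_x$ as well, so all generators lie in every $G_x$. For the reverse containment, I would fix an auxiliary point $x_0$ (any interior point of $\overline{\Omega}$ when one exists) and write $g \in G_\Omega \subseteq G_{x_0}$ uniquely as $g = s \prod_\alpha u_\alpha$ with $s \in S_0$ and $u_\alpha \in \GG_\alpha(\PP^{\lrc{-\alpha(x_0)}})$. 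The crux is then to upgrade each $u_\alpha$ to lie in $\GG_\alpha(\PP^{f_\Omega(\alpha)})$: for any other $x \in \Omega$ the element $g$ admits a factorization with $\alpha$-component in $\GG_\alpha(\PP^{\lrc{-\alpha(x)}})$, and the two factorizations must agree inside the ambient open cell $S \times \prod_\alpha \GG_\alpha(\ratk)$ of $G$ (where such factorizations are rigid for split groups), forcing $u_\alpha \in \GG_\alpha(\PP^{\lrc{-\alpha(x)}})$ for every $x \in \Omega$, and hence $u_\alpha \in \GG_\alpha(\PP^{f_\Omega(\alpha)})$. This proves $G_\Omega = S_0 U_\Omega$, and the bijectivity assertion \eqref{E:stabilizer} under the hypothesis that $\Omega$ contains an open set is then just the restriction of the bijective single-point product map at $x_0$ to the subgroup $S_0 \times \prod_\alpha \GG_\alpha(\PP^{f_\Omega(\alpha)})$.

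The main obstacle I anticipate is the coincidence-of-factorizations step: justifying that the $u_\alpha$ extracted from $g$ at $x_0$ really agrees, as an element of $\GG_\alpha(\ratk)$, with the $u_\alpha$ one would extract at any other $x \in \Omega$. This rigidity rests on the uniqueness of the pinned factorization in the big cell $S \times \prod_\alpha \GG_\alpha(\ratk) \hookrightarrow G$, available in the split setting and compatibly refining each parahoric decomposition; once it is in place, the remainder of the argument is bookkeeping against the formula defining $f_\Omega$. The restriction to $\Omega$ containing an open set in the second clause is used to guarantee that an interior $x_0$ is available at which the single-point bijection can be applied cleanly; in the lower-dimensional cases one can still derive $G_\Omega = S_0 U_\Omega$ by first passing to $\overline{\Omega}$ and invoking $G_\Omega = G_{\overline{\Omega}}$.
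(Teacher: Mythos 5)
The paper does not supply its own proof of this proposition: it is cited verbatim from Bruhat--Tits \cite[\S 6.4]{BruhatTits1972}, where it appears in the broader framework of groups $G_f$ attached to concave functions $f\colon \Phi\cup\{0\}\to\Rplus$ (the relevant $f$ here being $f_\Omega$, together with $f_\Omega(0)=0$). Your proposal attempts an independent proof by reduction to the single-point case, which is a natural idea, but it hinges on a claim that is either false or at least very far from obvious, namely the ``rigidity'' step: you assert that the product map
$S \times \prod_{\alpha\in\Phi}\GG_\alpha(\ratk)\to G$, taken in an \emph{arbitrary} fixed order on $\Phi$, is injective (an ``ambient open cell''), so that factorizations extracted from $G_{x_0}$ and from $G_x$ must coincide factor-by-factor. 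This is standard only for orderings compatible with a choice of Borel, i.e.\ all negative roots followed by all positive ones (the big cell $U^-\times S\times U^+\hookrightarrow G$); for a general ordering of $\Phi$, mixing positive and negative roots, the multiplication map on the unbounded groups $\GG_\alpha(\ratk)$ is not the restriction of any standard big-cell decomposition, and its global injectivity is not a theorem you can quote. The proposition, by contrast, asserts bijectivity of the product map on the bounded factors $\GG_\alpha(\PP^{f_\Omega(\alpha)})$ for \emph{every} order on $\Phi$, and that statement really does require the boundedness.

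Concretely, what your argument is missing is the role of concavity and the Chevalley commutator relations, which are precisely what Bruhat--Tits use and what makes the bounded situation tractable. Two points should be made explicit. First, $f_\Omega$ is a concave function: $f_\Omega(\alpha)+f_\Omega(\beta)\geq f_\Omega(\alpha+\beta)$ because $\sup$ is subadditive, and it is this that places $G_\Omega$ in the class $G_f$ to which the Bruhat--Tits theory applies. Second, the commutator relation $[\GG_\alpha(\PP^a),\GG_\beta(\PP^b)]\subseteq \prod_{m,n>0}\GG_{m\alpha+n\beta}(\PP^{ma+nb})$ together with concavity of $f_\Omega$ is what allows one to rearrange a product over one ordering of $\Phi$ into a product over any other, while staying inside $U_\Omega$; without it, neither surjectivity nor injectivity transfers from the Borel-compatible ordering to a general one. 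Even the weaker step of matching the factorizations of a single $g\in G_\Omega$ extracted at two points $x_0,x\in\Omega$ does not follow from uniqueness in a single ambient ``cell''; when $x_0$ and $x$ are not in a common small facet, there is no $z$ with $G_z\supseteq G_{x_0}\cup G_x$ in which both factorizations would live, and one must instead move through a chain of adjacent facets and propagate uniqueness step by step --- essentially reconstructing the Bruhat--Tits induction. In short, your strategy establishes the claim for the specific Borel-compatible orderings (and correctly identifies $S_0U_\Omega\subseteq G_\Omega$), but the general-ordering bijectivity and the reverse containment as you have phrased it require the concave-function and commutator machinery that Proposition~\ref{P:stabilizer} is in fact citing from \cite[\S 6.4]{BruhatTits1972}.
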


More generally, suppose $\Omega$ is contained in an intersection of affine root hyperplanes.  The corresponding gradients then form a subrootsystem $\Phi_\Omega$. If we define $L_\Omega = \langle S_0, \GG_\alpha(\PP^{f_\Omega(\alpha)}) \mid \alpha \in \Phi_\Omega \rangle$ then $G_\Omega$ is in bijection with  $L_\Omega \times \prod_{\alpha \in \Phi\setminus \Phi_\Omega}\GG_\alpha(\PP^{f_\Omega(\alpha)})$.

As a particular consequence we note the following.   For $\Omega \subset \A$, write $int(\Omega)$ for the topological interior of $\Omega$.

\begin{corollary} \label{C:interior}
Let $\Omega \subset \A$ be a bounded set such that $x\in int(\Omega)$.  Then 
in the factorization $G_\Omega = S_0 U_\Omega$ we have $U_\Omega \subseteq G_{x,+}$.
\end{corollary}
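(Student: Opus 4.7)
The plan is to reduce the inclusion to a pointwise inequality at each root, using the explicit pinning description of both groups. By Proposition~\ref{P:stabilizer}, $U_\Omega$ is generated by the root subgroups $\GG_\alpha(\PP^{f_\Omega(\alpha)})$ for $\alpha \in \Phi$. On the other hand, by the pinning recalled in Section~\ref{SS:structuretheory}, $G_{x,+} = G_{x,0+}$ is generated by $S_{0+}$ together with $\GG_\alpha(\ratk)_{x,0+} = \GG_\alpha(\PP^{\lrc{0+\,-\alpha(x)}})$, where $\lrc{0+\,-\alpha(x)}$ denotes the least integer strictly greater than $-\alpha(x)$. It therefore suffices to establish, for each $\alpha \in \Phi$, the inequality
$$
f_\Omega(\alpha) > -\alpha(x),
$$
since this forces $f_\Omega(\alpha) \geq \lrc{0+\,-\alpha(x)}$ and hence $\GG_\alpha(\PP^{f_\Omega(\alpha)}) \subseteq \GG_\alpha(\ratk)_{x,0+} \subseteq G_{x,+}$.

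To prove the inequality I split into two cases according to whether $\alpha(x) \in \mathbb{Z}$. If $\alpha(x) \notin \mathbb{Z}$, then the witness $x' = x$ (which lies in $int(\Omega) \subseteq \Omega$) already gives $f_\Omega(\alpha) \geq \lrc{-\alpha(x)} > -\alpha(x)$, and we are done. If $\alpha(x) \in \mathbb{Z}$, then $\lrc{-\alpha(x)} = -\alpha(x)$ and the naive witness is insufficient, so here I exploit the openness hypothesis. Since $\alpha$ is a nonzero linear functional on $E$, there is a direction of strict decrease of $\alpha$; as $x \in int(\Omega)$, a sufficiently small displacement of $x$ along this direction produces $x' \in \Omega$ with $\alpha(x') < \alpha(x)$. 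Then $-\alpha(x')$ lies strictly above the integer $-\alpha(x)$, so $\lrc{-\alpha(x')} \geq -\alpha(x) + 1 > -\alpha(x)$ and again $f_\Omega(\alpha) > -\alpha(x)$.

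Combining the two cases shows each generator of $U_\Omega$ lies in $G_{x,+}$, completing the proof. I do not anticipate any serious obstacle: the argument is essentially a bookkeeping exercise for the ceiling function, and the interior hypothesis enters only to handle the integer case by perturbing $x$ inside $\Omega$.
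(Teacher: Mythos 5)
Your argument is correct and follows essentially the same route as the paper: reduce to the inequality $f_\Omega(\alpha) > -\alpha(x)$ for each root, handle $\alpha(x)\notin\mathbb{Z}$ by the ceiling jump at $x$ itself, and handle $\alpha(x)\in\mathbb{Z}$ by perturbing $x$ inside the interior of $\Omega$ to decrease $\alpha$.
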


\begin{proof}
Since $G_{x,+}$ is generated by $S_1$ and the groups $\GG_\alpha(\PP^{\lrc{-\alpha(x)+}})$, by Proposition~\ref{P:stabilizer} it suffices to show that for all $\alpha \in \Phi$, $f_{\Omega}(\alpha)>-\alpha(x)$.  
Since $x\in int(\Omega)$ there exists some $z\in \Omega$ such that $\alpha(z)<\alpha(x)$, whence $f_\Omega(\alpha) \geq \lrc{-\alpha(z)}> -\alpha(x)$.
\end{proof}

We next wish to describe the relationship between  subgroups $G_{\Omega}$, with $\Omega \subseteq \A$, and  Moy-Prasad filtration subgroups $G_{x,r}$.  We begin by setting some notation.

If an irreducible root system $\widetilde{\Phi}$ has two root lengths let $\widetilde{\Phi}^l$ be the set of its long roots and $\widetilde{\Phi}^s=\widetilde{\Phi}\setminus \widetilde{\Phi}^l$; otherwise, let $\widetilde{\Phi}^s=\widetilde{\Phi}^l=\widetilde{\Phi}$.   Given a root system $\Phi$ with irreducible components $\widetilde{\Phi}_i$, for $1\leq i \leq m$, define $\Phi^l = \cup_i \widetilde{\Phi}_i^l$ and $\Phi^s = \cup_i \widetilde{\Phi}_i^s$.  Note that $\Phi$, $\Phi^l$ and $\Phi^s$ all have the same rank.

Given $x\in \A$ and $r \in \real_{\geq 0}$, define
\begin{equation} \label{E:omegaar}
\Omega_{x}(\A,r) = \{ z \in \A \mid \forall \alpha \in \Phi, \vert \alpha(x)-\alpha(z) \vert \leq r \}.
\end{equation}
Define $\Omega^l_{x}(\A,r)$ and $\Omega^s_x(\A,r)$ by replacing $\Phi$ in \eqref{E:omegaar} with $\Phi^l$ and $\Phi^s$, respectively. 

\begin{proposition} \label{P:gxromega}
Let $x \in \A$ and $r  \in \real_{\geq 0}$.  Then
\begin{equation}\label{E:shortlong}
G_{\Omega^s_x(\A,r)} \subseteq S_0G_{x,r} \subseteq G_{\Omega^l_x(\A,r)}=G_{\Omega_x(\A,r)}.
\end{equation}
Moreover, whenever the root system $\Phi$ does not contain an irreducible component of type $G_2$ the second inclusion is an equality, that is, $S_0G_{x,r} = G_{\Omega_x(\A,r)}$.
\end{proposition}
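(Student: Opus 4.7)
The strategy is to apply Proposition~\ref{P:stabilizer} to each of $\Omega_x(\A,r)$, $\Omega^l_x(\A,r)$, and $\Omega^s_x(\A,r)$, and to reduce each inclusion to a root-by-root comparison of the integers $f_\Omega(\alpha)$ and $\lrc{r-\alpha(x)}$.  Concretely, since $S_0 G_{x,r} = S_0\langle \GG_\alpha(\PP^{\lrc{r-\alpha(x)}}) : \alpha \in \Phi\rangle$ while $G_\Omega = S_0 \langle \GG_\alpha(\PP^{f_\Omega(\alpha)}) : \alpha \in \Phi \rangle$ by Proposition~\ref{P:stabilizer}, the inclusion $G_\Omega \subseteq S_0 G_{x,r}$ is equivalent to $f_\Omega(\alpha) \geq \lrc{r-\alpha(x)}$ for every $\alpha \in \Phi$, and the reverse inclusion to the opposite inequality.

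The heart of the proof lies in computing $f_\Omega(\alpha) = \max\{\lrc{-\alpha(z)} : z \in \Omega\}$ in each case.  For a \emph{long} root $\alpha$, the test point $z := x - \tfrac{r}{2}\alpha^\vee$ satisfies $\alpha(z-x) = -r$ and, for every $\beta \in \Phi$, $|\beta(z-x)| = \tfrac{r}{2}|\langle\beta,\alpha^\vee\rangle| \leq r$: the Cartan integer $\langle\beta,\alpha^\vee\rangle$ is bounded by $2$ whenever the coroot $\alpha^\vee$ is short, the exceptional $\pm 3$ (occurring only in $G_2$) requiring $\alpha^\vee$ long.  Hence $f_\Omega(\alpha) = \lrc{r-\alpha(x)}$ for every long $\alpha$ and every $\Omega$ in our list.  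For a \emph{short} root $\alpha$, one bounds $|\alpha(z-x)|$ on $\Omega^l_x(\A,r)$ by expressing $\alpha = \sum_i c_i \beta_i$ with $\beta_i$ long and $\sum_i |c_i|$ minimal: in types $B_n$, $C_n$, $F_4$ (and trivially in the simply-laced types) one can arrange $\sum_i|c_i| = 1$, for instance $e_1 = \tfrac{1}{2}(e_1+e_2) + \tfrac{1}{2}(e_1-e_2)$ in $B_n$, giving $|\alpha(z-x)| \leq r$; in $G_2$ the minimum is $\tfrac{2}{3}$, realized by $\alpha_1 = \tfrac{1}{3}(3\alpha_1+\alpha_2) - \tfrac{1}{3}\alpha_2$, giving only $|\alpha(z-x)| \leq 2r/3$.

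Granted these values, the chain of inclusions unfolds as follows.  The first inclusion $G_{\Omega^s_x(\A,r)} \subseteq S_0 G_{x,r}$ reduces to $f_{\Omega^s_x}(\alpha) \geq \lrc{r-\alpha(x)}$: for long $\alpha$ the extremizer already lies in $\Omega_x \subseteq \Omega^s_x$, and for short $\alpha$ the same construction $z = x - \tfrac{r}{2}\alpha^\vee$ respects the short-root constraints of $\Omega^s_x$ (now applying the Cartan bound to short $\beta$ against long $\alpha^\vee$, where the short-short pairing forces $|\langle\beta,\alpha^\vee\rangle|\leq 2$ by Cauchy--Schwarz).  The second inclusion $S_0 G_{x,r} \subseteq G_{\Omega^l_x(\A,r)}$ reduces to $f_{\Omega^l_x}(\alpha) \leq \lrc{r-\alpha(x)}$, which is the uniform upper bound $|\alpha(z-x)| \leq r$ on $\Omega^l_x$ derived above.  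The equality $G_{\Omega^l_x(\A,r)} = G_{\Omega_x(\A,r)}$ reduces to $f_{\Omega^l_x}(\alpha) = f_{\Omega_x}(\alpha)$; outside $G_2$ the explicit extremizer on $\Omega^l_x$ automatically satisfies the short-root conditions, while in $G_2$ the long-root constraints already force $|\alpha(z-x)| \leq 2r/3 < r$ for every short $\alpha$, so in fact $\Omega^l_x(\A,r) = \Omega_x(\A,r)$ there.

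The final ``no-$G_2$'' equality $S_0 G_{x,r} = G_{\Omega_x(\A,r)}$ requires, in addition, $f_{\Omega^l_x}(\alpha) \geq \lrc{r-\alpha(x)}$ for every short $\alpha$, i.e.\ that the upper bound $r$ on $|\alpha(z-x)|$ over $\Omega^l_x$ is actually attained.  This is clear in $B_n$, $C_n$, $F_4$ (choose $z$ so that the two long roots $\beta_1,\beta_2$ in the half-sum both take the value $-r$) and trivially in the simply-laced case; it visibly fails in $G_2$, where for instance $x = 0$, $r = 3$, $\alpha = \alpha_1$ give $f_{\Omega^l_x}(\alpha) = \lrc{2} < \lrc{3} = \lrc{r-\alpha(x)}$, witnessing $G_{\Omega^l_x} \supsetneq S_0 G_{x,r}$.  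The principal obstacle I anticipate is formulating the short-root bound in a type-free manner; a clean device is to restrict attention to the rank-two subsystem generated by $\alpha$ together with a well-chosen long root, which is of type $B_2=C_2$ away from $G_2$, so that the half-sum identity applies directly.
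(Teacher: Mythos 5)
Your organizing principle---reduce each inclusion to a root-by-root comparison of $f_\Omega(\alpha)$ against $\lrc{r-\alpha(x)}$ via Proposition~\ref{P:stabilizer}, then split on root length---is the same as the paper's, which likewise argues ``on each root subgroup.''  The test points, however, are genuinely different, and this is where the two proofs diverge.  For the first inclusion you take $z=x-\tfrac{r}{2}\alpha^\vee$ and control $|\beta(z-x)|$ by Cartan-integer bounds (noting that $|\langle\beta,\alpha^\vee\rangle|\leq 2$ when $\alpha$ is long, and, by Cauchy--Schwarz, when both $\alpha,\beta$ are short); the paper instead passes to the irreducible component of $\Phi^s$ (resp.\ $\Phi$) containing $\alpha$, chooses a base for which $\alpha$ is the highest root, and picks $v$ in the hyperplane $\alpha_0=r$ meeting the positive cone $D'$.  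For the identity $\Omega^l_x(\A,r)=\Omega_x(\A,r)$ you expand each short root as a half-sum of two long roots, a type-by-type verification that incidentally produces the sharp $G_2$ constant $2/3$; the paper's argument is type-free, adapting a positive system to $z-x$ and bounding every $|\beta(x-z)|$ by the highest long root.  Your version is more computational and makes the $G_2$ obstruction quantitative---you produce an explicit witness $(x,r,\alpha)=(0,3,\alpha_1)$---at the cost of some classification bookkeeping that the paper avoids until the final ``$S_0G_{x,r}=G_{\Omega_x(\A,r)}$'' step, which both proofs leave to a case check.  One small overclaim to flag: the line ``$f_\Omega(\alpha)=\lrc{r-\alpha(x)}$ for every long $\alpha$ and every $\Omega$ in our list'' is false for $\Omega=\Omega^s_x(\A,r)$ in type $G_2$, where a long root can take values up to $2r$ on $\Omega^s_x$, so only the inequality $\geq$ holds; since each inclusion uses only the relevant one-sided bound this does not damage the argument, but the equality should not be asserted uniformly.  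You also silently assume $r>0$ (needed for the product decomposition in Proposition~\ref{P:stabilizer}); the paper dispatches $r=0$ in one line and you should do the same.
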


\begin{proof}
First note that $\Omega^l_x(\A,r)=\Omega_x(\A,r)$.  Namely, given $z \in \Omega^l_x(\A,r)$, choose a positive system $\Phi^{(+)}$ for which $z-x$ is in the closure of the positive cone and let $\theta^{(+)} \in \Phi^l$ be the corresponding highest (long) root.  Then for each $\beta \in \Phi$, $\vert \beta(x-z) \vert \leq \theta^{(+)}(z-x)\leq r$, so $z \in \Omega_x(\A,r)$.  Clearly also $\Omega^l_x(\A,r)\supseteq \Omega_x(\A,r)$.  Hence $G_{\Omega^l_x(\A,r)}=G_{\Omega_x(\A,r)}$.

If $r=0$ the groups appearing in \eqref{E:shortlong} are all equal and there is nothing to show, so suppose $r>0$.  Each group is generated by $S_0$ and certain subgroups of the root groups; thus it suffices to show the inclusions on each root subgroup.

Let $z\in \Omega_x(\A,r)$.  Then for each $\alpha \in \Phi$ we have $-\alpha(z)\leq r-\alpha(x)$, whence $\GG_\alpha(\PP^{\lrc{r-\alpha(x)}}) \subseteq \GG_\alpha(\PP^{\lrc{-\alpha(z)}})$.  It follows that $G_{x,r} \subseteq \cap_{z\in \Omega_x(\A,r)}G_z = G_{\Omega_x(\A,r)}$, and the second inclusion holds.

Now consider the first inclusion.  It suffices to show that for all $\alpha \in \Phi$ there exists $z_\alpha \in \Omega^s_x(\A,r)$ such that $-\alpha(z_\alpha)\geq r-\alpha(x)$ (as then $f_{\Omega^s_x(\A,r)}(\alpha) \geq r-\alpha(x)$).

First suppose $\alpha \in \Phi^s$.  Then $\alpha$ lies in a unique irreducible component ${\Phi^s}'$ of $\Phi^s$, corresponding to a subspace $E'$ of $E=X_\ast(S)\otimes_{\mathbb{Z}}\real$.  Let $\Delta'$ be the base of  ${\Phi^s}'$ with respect to which $\alpha$ is the highest root and let $H_{\alpha,r}'$ denote the (nonempty) intersection of the hyperplane $\alpha = r$ with the positive cone $D'$ defined by $\Delta'$.  Choose $v \in H_{\alpha,r}' \subset E'$.  Then for all $\beta \in  {\Phi^s}'$ we have $\vert \beta(v) \vert \leq \alpha(v) = r$, and for all $\beta \in \Phi^s \setminus {\Phi^s}'$ the orthogonality of irreducible components ensures $\beta(v)=0 \leq r$.  Therefore the element $z_\alpha = x-v$ satisfies our requirements.

Now let $\alpha \in \Phi \setminus \Phi^s$.  Let $\Phi''$ denote the irreducible component of $\Phi$ containing $\alpha$, $\Delta''$ the base with respect to which $\alpha$ is the highest root, and $D''$ the corresponding positive cone.  Let $\alpha_0 \in \Phi^s \cap \Phi''$ be the corresponding highest short root and define $H_{\alpha_0,r}'$ as in the preceding paragraph.  For any $v$ in the nonempty intersection $H_{\alpha_0,r}'\cap D''$ we have $\alpha(v) \geq \alpha_0(v)=r$ and, as argued above, for all $\gamma \in \Phi^s$, $\vert \gamma(v)\vert \leq r$.  Therefore $z_\alpha = x-v \in \Omega^s_x(\A,r)$ and satisfies  $-\alpha(z_\alpha)\geq r-\alpha(x)$, as required.

Now consider the final assertion.  If $\Phi$ is simply-laced then equality holds because $\Omega^s_x(\A,r)=\Omega_x(\A,r)$.   Otherwise by the preceding arguments it suffices to show that in each non-simply-laced irreducible root system except $G_2$, there exists a short root $\alpha$ and a vector $v$ such that $\alpha(v)=r$ and for all $\beta \in \Phi$, $\vert \beta(v)\vert \leq r$.  This is easily verified case-by-case.  
\end{proof}

We remark that equality fails on the simple system of type $G_2$ because the boundary of $\Omega_x^s(\A,r)$ does not meet the boundary of  $\Omega_x(\A,r)$.

\section{The double coset space $G_{y}\backslash G /G_{x}$} \label{S:doublecosets}

We begin by recalling a result about generalized BN-pairs \cite[Proposition 7.4.15]{BruhatTits1972}. 

\begin{proposition}\label{P:doublecosets}
For $i=1,2$ let $\Omega_i$ denote a nonempty subset of $\A$, $G_i$ its pointwise stabilizer in $G$, $N_i$ the pointwise stabilizer of $\Omega_i$ in $N$, and $\widehat{W}_i$ its image in $W=N/S_0$.  Then the natural map
$$
\widehat{W}_1\backslash W / \widehat{W}_2 \to G_1\backslash G / G_2
$$
is bijective.
\end{proposition}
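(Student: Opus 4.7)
The plan is to verify well-definedness, surjectivity, and injectivity of the map $w \mapsto G_1 n_w G_2$ (for $n_w \in N$ any lift), working within the BN-pair framework of \cite{BruhatTits1972}.

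Well-definedness is straightforward. Any two lifts in $N$ of a given $w \in W = N/S_0$ differ by an element of $S_0$, and $S_0 \subseteq N_i \subseteq G_i$ for $i=1,2$ because $S_0$ acts trivially on $\A$. Replacing $w$ by $\hat{w}_1 w \hat{w}_2$ with $\hat{w}_i \in \widehat{W}_i$ multiplies the lift by representatives in $N_i \subseteq G_i$, preserving the $(G_1, G_2)$-double coset.

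For surjectivity, given $g \in G$, I want $g_1 \in G_1$, $g_2 \in G_2$, and $n \in N$ with $g = g_1 n g_2$. I would exploit the strong transitivity of the $G$-action on $\B$: the set $g \cdot \Omega_2$ lies in the apartment $g \cdot \A$, and by the building axiom that two apartments meeting in a chamber are interchanged by an element of $G$ fixing their intersection, one constructs $g_1 \in G_1$ with $g_1^{-1} g \cdot \Omega_2 \subseteq \A$. Once both $\Omega_2$ and its image $g_1^{-1} g \cdot \Omega_2$ sit in $\A$, the transitive action of $N$ on such configurations yields $n \in N$ agreeing with $g_1^{-1} g$ on $\Omega_2$, whence $n^{-1} g_1^{-1} g \in G_{\Omega_2} = G_2$.

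For injectivity, suppose $n_1, n_2 \in N$ satisfy $n_2 = g_1 n_1 g_2$ with $g_i \in G_i$. Then $g_1$ carries $\Omega_1 \cup n_1 \cdot \Omega_2 \subseteq \A$ to $\Omega_1 \cup n_2 \cdot \Omega_2 \subseteq \A$, fixing $\Omega_1$ pointwise. By the same exchange principle, there is $m \in N$ agreeing with $g_1$ on this union; so $m$ fixes $\Omega_1$ (hence lies in $N_1$) and $m n_1$ agrees with $n_2$ on $\Omega_2$, forcing $n_2^{-1} m n_1 \in G_2 \cap N = N_2$. Passing to images in $W$ gives $w_2 \in \widehat{W}_1 w_1 \widehat{W}_2$. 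The main obstacle, as I see it, is the exchange-of-apartments argument invoked in both the surjectivity and injectivity steps, which draws on the detailed BN-pair machinery of \cite[\S 6--7]{BruhatTits1972}; by contrast, the bookkeeping involving $S_0$, $N_i$, and $\widehat{W}_i$ is routine.
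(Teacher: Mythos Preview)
The paper does not prove this proposition; it simply records it as \cite[Proposition~7.4.15]{BruhatTits1972}, so there is no in-paper argument to compare your sketch against.

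Your well-definedness and injectivity steps are correct. For injectivity the crucial observation, which you make, is that $\Omega_1 \cup n_i\Omega_2$ already lies in $\A$ because $n_i \in N$ stabilizes $\A$; hence $\A$ and $g_1\cdot\A$ are two apartments sharing $\Omega_1 \cup n_2\Omega_2$, and the exchange property yields an $m \in N$ agreeing with $g_1$ on $\Omega_1 \cup n_1\Omega_2$.

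The surjectivity step, however, has a gap in the generality stated. To produce $g_1 \in G_{\Omega_1}$ with $g_1^{-1}g\cdot\Omega_2 \subseteq \A$ via apartment exchange, you would need some apartment of $\B$ containing both $\Omega_1$ and $g\cdot\Omega_2$; for general subsets $\Omega_i$ no such apartment need exist, since an arbitrary finite collection of points of $\B$ need not lie in a single apartment. Your argument \emph{does} go through when each $\Omega_i$ is a single point --- any two points of $\B$ share an apartment --- and that is the only case the paper actually uses (in the Corollary immediately following). For the general statement, Bruhat and Tits do not argue via a single geometric exchange; they deduce $G = G_{\Omega_1} N G_{\Omega_2}$ from the valued-root-datum structure, using the factorization $G_\Omega = N_\Omega U_\Omega$ together with Iwasawa- and Bruhat-type decompositions of $G$ in terms of root subgroups. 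So your identification of \emph{where} the difficulty lies is accurate, but the specific building axiom you invoke is not strong enough on its own to carry the surjectivity.
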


\begin{corollary}
Let $x,y$ be vertices of $\A$.  Then
$$
G_{y}\backslash G /G_{x} \cong W_y \backslash W / W_x.
$$
\end{corollary}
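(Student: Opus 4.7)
The plan is to apply Proposition~\ref{P:doublecosets} directly with $\Omega_1 = \{y\}$ and $\Omega_2 = \{x\}$. The pointwise stabilizer of a single vertex in $G$ is by definition $G_x$ (resp.\ $G_y$), so the only substantive content is to identify the image $\widehat{W}_x$ in $W$ of the pointwise stabilizer of $\{x\}$ in $N$ with the subgroup $W_x = \langle r_\beta \mid \beta \in \Phi_x \rangle$ introduced in Section~\ref{SS:structuretheory}, and similarly for $y$.

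One direction is immediate: each generator $r_\beta$ with $\beta \in \Phi_x$ is by definition the reflection in an affine hyperplane through $x$, hence fixes $x$; so $W_x \subseteq \widehat{W}_x$. For the reverse inclusion, I would decompose a given $w \in \widehat{W}_x$ using $W \cong X_\ast(S) \rtimes W_0$ as $w = t(\ell)w_0$ with $\ell \in X_\ast(S)$ and $w_0 \in W_0$. The fixed-point condition $w \cdot x = x$ forces $\ell = x - w_0 x$, so in particular $x - w_0 x \in X_\ast(S)$; then \eqref{E:wlin} exhibits $w$ as the image of $w_0$ inside $W_x$ provided $w_0 \in W_x^{lin}$.

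The crux is therefore the assertion that every $w_0 \in W_0$ with $x - w_0 x \in X_\ast(S)$ already lies in $W_x^{lin}$. Since $\GG$ is simply connected we have $X_\ast(S) = \mathbb{Z}\Phi^\vee$, and for a single linear reflection $r_\alpha$ the formula $x - r_\alpha x = \alpha(x)\alpha^\vee$ shows that $r_\alpha$ fixes $x$ exactly when $\alpha(x) \in \mathbb{Z}$, i.e.\ when $\alpha \in \Phi_x^{lin}$. The passage from individual reflections to arbitrary $w_0$ is the standard Coxeter-theoretic fact that the stabilizer of a point in an affine reflection group is generated by the reflections in walls through that point, which I expect to be the only nontrivial ingredient and which I would cite rather than reprove. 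Once this identification $\widehat{W}_x = W_x$ (and $\widehat{W}_y = W_y$) is in hand, Proposition~\ref{P:doublecosets} immediately yields the desired bijection $G_y \backslash G / G_x \cong W_y \backslash W / W_x$.
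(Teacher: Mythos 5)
Your proof is correct and follows essentially the same route as the paper: apply Proposition~\ref{P:doublecosets} with the singletons $\Omega_1=\{y\}$, $\Omega_2=\{x\}$ and identify $\widehat{W}_z$ with $W_z$ for a vertex $z$. The paper dispatches that identification by citing \cite[7.1.3]{BruhatTits1972}, while you unpack it (via $W\cong X_\ast(S)\rtimes W_0$ and \eqref{E:wlin}) to the same standard Coxeter-theoretic fact that the stabilizer of a point in the affine Weyl group is generated by the reflections through it.
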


\begin{proof}
By Proposition~\ref{P:doublecosets}, it suffices to note that for any vertex $z \in \A$, the group $\widehat{W}_z=(N \cap G_z)/S_0$ coincides with $W_z$, the group generated by the reflections in the affine hyperplanes through $z$.  This follows in our case from \cite[7.1.3]{BruhatTits1972}. 
\end{proof}

Let $D_x:= \{z \in \A \mid \forall \alpha \in \Phi_x^{lin,+}, \alpha(z)>0\}$ denote the positive cone for $\Phi_x^{lin,+}$, whose closure is a fundamental domain for the action of $W_x^{lin}$ on $\A$.  Let $\Upsilon_x = \{ w \in W_0 \mid w D \subseteq D_x\}$; then $\overline{D_x} = \cup_{w\in \Upsilon_x}w\overline{D}$.

\begin{proposition} \label{P:doublecosetreps}
Suppose $y$ is special.  A set of double coset representatives for 
$W_y \backslash W / W_x$
is given by 
\begin{align*}
X_{x,y}^+ &= X_\ast(S) \cap (y-x+\overline{D_x})\\
 &= \{\ell \in X_\ast(S) \mid \forall \alpha\in \Phi_x^{lin,+}, \alpha(\ell) \geq \alpha(y-x) \}. 
\end{align*}
\end{proposition}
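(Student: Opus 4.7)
The plan is to identify the double coset space $W_y \backslash W / W_x$ with an orbit space for $W_x^{lin}$ acting linearly on $X_\ast(S)$ after a shift of origin, and then exhibit $X_{x,y}^+$ as a set of orbit representatives via the standard fundamental domain $\overline{D_x}$.

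First I will produce a bijection $W_y \backslash W \cong X_\ast(S)$. Since $y$ is special, $W_y^{lin}=W_0$, and by \eqref{E:wlin} every element of $W_y$ has the form $t(y-w_0 y)w_0$ with $w_0\in W_0$; in particular $y-w_0 y\in X_\ast(S)$ for all $w_0\in W_0$. Combined with $W_y\cap X_\ast(S)=\{1\}$ (no nontrivial translation fixes $y$) and the semidirect decomposition $W=X_\ast(S)\rtimes W_0$, every $w\in W$ factors uniquely as $w=u\cdot t(m)$ with $u\in W_y$ and $m\in X_\ast(S)$. A direct computation shows that if $w=t(\ell)w_0$ then $m=w_0^{-1}(\ell-y)+y$, which is in $X_\ast(S)$ because both $w_0^{-1}\ell$ and $y-w_0^{-1}y$ are. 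This gives the bijection $W_y\backslash W \to X_\ast(S)$, $W_y w \mapsto m$.

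Next I will transport the right $W_x$-action to $X_\ast(S)$. Writing $v\in W_x$ as $v=t(x-w_1 x)w_1$ with $w_1\in W_x^{lin}$ via \eqref{E:wlin}, I compute
\[
t(m)\, v = t(m+x-w_1 x)\, w_1,
\]
and then apply the bijection of the previous paragraph (with $\ell=m+x-w_1 x$ and $w_0=w_1$) to obtain the new representative
\[
m' \;=\; w_1^{-1}\bigl(m-(y-x)\bigr) + (y-x).
\]
Thus the action of $W_x$ on $W_y\backslash W\cong X_\ast(S)$ is the linear action of $W_x^{lin}$ with origin shifted to $y-x$. That this action preserves the lattice $X_\ast(S)$ follows from the fact that $w_1^{-1}m$, $w_1^{-1}x-x$ (in $X_\ast(S)$ because $w_1\in W_x^{lin}$), and $y-w_1^{-1}y$ (in $X_\ast(S)$ because $y$ is special) all lie in $X_\ast(S)$.

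Finally, $\overline{D_x}$ is a fundamental domain for the linear $W_x^{lin}$-action on $E$ in the strong sense that each orbit meets it in exactly one point, and hence $(y-x)+\overline{D_x}$ is a fundamental domain for the shifted action. Intersecting with the $W_x^{lin}$-stable lattice $X_\ast(S)$ gives a complete set of representatives for the orbits, namely $X_{x,y}^+=X_\ast(S)\cap\bigl((y-x)+\overline{D_x}\bigr)$. The equivalent inequality description is immediate: $\ell\in(y-x)+\overline{D_x}$ iff $\alpha(\ell-(y-x))\geq 0$ for every $\alpha\in\Phi_x^{lin,+}$.

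The only real obstacle is careful bookkeeping with the semidirect product when deriving the formula for $m'$, and invoking the special assumption on $y$ (and the relation $w_1 x-x\in X_\ast(S)$ for $w_1\in W_x^{lin}$) at precisely the right moments so that both the decomposition $W=W_y\cdot X_\ast(S)$ and the preservation of $X_\ast(S)$ by the shifted action hold.
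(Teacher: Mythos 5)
Your proposal is correct and follows essentially the same route as the paper: the paper also begins by using the speciality of $y$ to factor each $w\in W$ as (an element of $W_y$)$\cdot$(a translation $t(v_1)$ with $v_1\in X_\ast(S)$), then conjugates by a suitable $w_1\in W_x^{lin}$ to land in $y-x+\overline{D_x}$, and finally deduces uniqueness from the fact that $\overline{D_x}$ is a strict fundamental domain for $W_x^{lin}$. Your packaging — explicitly identifying $W_y\backslash W\cong X_\ast(S)$, transporting the right $W_x$-action to the shifted linear $W_x^{lin}$-action about $y-x$, and then invoking the fundamental-domain theorem once — handles exhaustion and uniqueness in a single stroke rather than in two separate computations, but the underlying identities (your $m=w_0^{-1}(\ell-y)+y$ is the paper's $v_1$, and your $m'=w_1^{-1}(m-(y-x))+(y-x)$ is the paper's $\ell'=w_0(\ell+x-y)+(y-x)$) coincide.
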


\begin{proof}
Since $W_y \cong W_0$, we have $W_y\backslash W \cong X_\ast(S)$, so a set of double coset representatives may be chosen from those elements of $X_\ast(S) \subset \A$ lying in a suitable translate of the fundamental domain $\overline{D_x}$ for the action of $W_x^{lin}$.  To identify the correct translate, note that for every $w_y\in W_y$ and $w_x \in W_x$ with corresponding $w_1\in W_0$ and $w_2\in W_x^{lin}$ (as in \eqref{E:wlin}), and any $\ell \in X_\ast(S)$, we have
$$
w_yt(\ell)w_x = w_1 w_2 t(w_2^{-1}(\ell+x-y)+w_2^{-1}w_1^{-1}y-x).
$$
This is a translation exactly when $w_1w_2=1$, whence by varying $w_2$ one has within this orbit a unique translation $t(\ell)$ corresponding to $\ell+x-y \in \overline{D_x}$.
\end{proof}

For example, if $y=x$ is special then $X_{y,y}^+ = X_+$, the set of semidominant cocharacters.  

\begin{remark}
If $x\neq y$, then there is some $\alpha \in \Phi_x^{lin}$ for which $\alpha(x-y)\neq 0$, so that $X_{x,y}^+ \neq X_+$.  More generally $X_{x,y}^+ = X_+ + (y-x)$ if and only if $x-y \in X_\ast(S)$, which will not arise if $x,y$ are chosen in distinct orbits under $G$, for example.
\end{remark}

\begin{definition}
Let $int(X_{x,y}^+) = X_\ast(S) \cap (y-x+D_x)$ and $\partial(X_{x,y}^+)= X_{x,y}^+\setminus int(X_{x,y}^+)$,
which we call the interior and the boundary of $X_{x,y}^+$, respectively.  
\end{definition}

We record some key properties of the interior of $X_{x,y}^+$ in two lemmas.

\begin{lemma} \label{L:upsilon}
The boundary of $y-x+\overline{D}$ does not meet $int(X_{x,y}^+)$.  More generally, 
$$
int(X_{x,y}^+) = \bigsqcup_{w \in \Upsilon_x} X_{x,y}^+ \cap (y-x+wD).
$$
\end{lemma}

\begin{proof}
Since $\Phi_x^{lin}\subseteq \Phi$, $\overline{D_x}=\cup_{w \in\Upsilon_x} w\overline{D}$ and thus $X_{x,y}^+ \subset \cup_{w \in\Upsilon_x}\left(y-x+w\overline{D}\right)$.
Fix $w\in \Upsilon_x$ and suppose $\ell \in X_{x,y}^+ \cap (y-x+w(\overline{D}\setminus D))$.  Then $x-y+\ell \in w(\overline{D}\setminus D)$ so there exists $\alpha \in \Phi$ such that $\alpha(x-y+\ell)=0$.  But as $y$ is special and $\ell \in X_\ast(S)$, this implies $\alpha(x) \in \mathbb{Z}$, whence $\alpha \in \Phi_x^{lin}$.  Consequently, $x-y+\ell \in \partial(X_{x,y}^+)$.
\end{proof}

Finally, recall that for a vertex $u$ and point $v\neq u$ in $\A$, the geodesic $[u,v]$ meets a unique facet $F_u \neq \{u\}$ of $\A$ whose closure contains $u$.

\begin{lemma} \label{L:alcove}
If $\ell \in int(X_{x,y}^+)$ then the convex closure of $[y,x+\ell]$ in $\A$ contains unique alcoves adjacent to each endpoint.
\end{lemma}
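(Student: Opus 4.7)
The plan is to study the segment locally near each endpoint via its direction vector $v := (x+\ell) - y$. An alcove whose closure contains an endpoint $p$ lies in the convex closure of $[y, x+\ell]$ precisely when its interior meets the segment, which near $p$ is determined by whether the unit direction from $p$ (namely $v$ at $y$, or $-v$ at $x+\ell$) lies in the interior of the corresponding local chamber. Here by \emph{local chamber at $p$} I mean a connected component of the complement in a small neighborhood of $p$ of the affine root hyperplanes passing through $p$. Existence of such an alcove at each endpoint is automatic; the content of the claim is uniqueness, which I will reduce to the regularity of $v$ with respect to the appropriate root system at each endpoint.

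At the endpoint $y$, since $y$ is special, every $\alpha \in \Phi$ satisfies $\alpha(y) \in \mathbb{Z}$, so the hyperplanes of $\Phi_{af}$ through $y$ have gradient ranging over all of $\Phi$ and the local chambers at $y$ are the open Weyl chambers $y + wD$ for $w \in W_0$. By Lemma~\ref{L:upsilon}, $\ell \in int(X_{x,y}^+)$ forces $v = \ell - (y-x) \in wD$ for some (unique) $w \in \Upsilon_x$. Thus $v$ lies in the interior of exactly one Weyl chamber, which corresponds to a unique alcove adjacent to $y$ in the convex closure of $[y,x+\ell]$.

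At the endpoint $x+\ell$, which need not be special, I must first identify the relevant root system. Since $G$ is simply connected, $X_\ast(S) = \mathbb{Z}\Phi^\vee$, hence $\alpha(\ell) \in \mathbb{Z}$ for every $\alpha \in \Phi$; it follows that $\alpha(x) \in \mathbb{Z}$ if and only if $\alpha(x+\ell) \in \mathbb{Z}$, so $\Phi_{x+\ell}^{lin} = \Phi_x^{lin}$. The hyperplanes through $x+\ell$ therefore have gradient in $\Phi_x^{lin}$, and the local chambers at $x+\ell$ are the Weyl chambers of $\Phi_x^{lin}$ based there. Uniqueness of the alcove entering $x+\ell$ reduces to showing $\alpha(v) \neq 0$ for all $\alpha \in \Phi_x^{lin}$, and for $\alpha \in \Phi_x^{lin,+}$ the hypothesis $\ell \in int(X_{x,y}^+)$ gives precisely $\alpha(v) = \alpha(\ell) - \alpha(y-x) > 0$, which suffices.

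The main obstacle is the transfer of regularity from $x$ to the possibly non-special point $x+\ell$; once the identification $\Phi_{x+\ell}^{lin} = \Phi_x^{lin}$ is in hand, the defining inequalities of $int(X_{x,y}^+)$ convert directly into the needed regularity of $v$ at the second endpoint, and Lemma~\ref{L:upsilon} supplies the corresponding regularity at the first.
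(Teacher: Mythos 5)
Your proof is correct, and it reaches the same underlying fact (regularity of the direction vector $v = x+\ell-y$ with respect to the relevant root hyperplanes at each endpoint), but it is organized rather differently from the paper's argument. The paper gives a single unified contradiction: at either endpoint the segment initially enters a unique facet $F_z$, and if $F_z$ were not an alcove it would lie in a hyperplane $\alpha = k$; since the segment's direction is parallel to $\alpha=0$, the whole segment lies in $\alpha=k$, giving $\alpha(y)=\alpha(x+\ell)=k$ and thence $\alpha\in\Phi_x^{lin}$ with $\alpha(\ell)=\alpha(y-x)$, contradicting $\ell\in int(X_{x,y}^+)$. Your version instead analyses each endpoint separately via local chambers: at $y$ you invoke Lemma~\ref{L:upsilon} to get $v \in wD$ for some $w\in \Upsilon_x$, hence $\Phi$-regularity; at $x+\ell$ you make the (nice, and worth stating explicitly) observation that $\Phi_{x+\ell}^{lin}=\Phi_x^{lin}$ because $\alpha(\ell)\in\mathbb{Z}$, reducing the question to $\Phi_x^{lin}$-regularity of $v$, which is exactly the interiority inequalities. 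Your route is self-consistent; its main difference is that it delegates the $y$-endpoint to Lemma~\ref{L:upsilon} (whose proof is in fact essentially the same hyperplane argument), while the paper re-derives everything in one stroke and never needs to distinguish the two endpoints. What you gain is the explicit identification of the local root systems at each endpoint; what the paper's version gains is brevity and independence from Lemma~\ref{L:upsilon}.

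One small slip in your framing: after setting up the bijection between alcoves adjacent to $p$ lying in the convex closure and local chambers containing the direction vector, you write that ``existence is automatic; the content is uniqueness.'' With your setup it is the reverse: uniqueness is automatic since the open local chambers are pairwise disjoint, so $v$ (resp.\ $-v$) lies in at most one of them; it is \emph{existence} of a chamber containing $v$ that requires regularity, i.e.\ that $v$ avoids every wall. Fortunately you go on to prove regularity at both endpoints, which delivers both existence and uniqueness, so the slip is cosmetic and does not affect the validity of the proof.
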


\begin{proof}
This is the observation that, for interior $\ell$, $[y,x+\ell]$ is contained in no wall of $\A$, and hence that the facets $F_y$ and $F_{x+\ell}$ at each endpoint are alcoves.  Namely, suppose $\ell \in X_{x,y}^+$ is such that for some $\alpha \in \Phi$, and some $m\in \mathbb{Z}$, $\alpha(y)=\alpha(x+\ell) = m$.  Then $\alpha(x)=m-\alpha(\ell) \in \mathbb{Z}$, so in fact $\alpha\in \Phi_x^{lin}$.  Since $\alpha(\ell)=\alpha(y-x)$, we deduce $\ell \in \partial(X_{x,y}^+)$.  
\end{proof}

\section{Restrictions of Supercuspidal Representations to $G_y$} \label{S:restriction}

For reference we cite a consequence of Mackey theory  for compactly induced representations derived from \cite{Kutzko1977}.

\begin{lemma} 
Let $G$ be the $\ratk$-points of a linear algebraic group defined over $\ratk$, with a compact open subgroup $K$ and a compact-mod-center subgroup $H$.  Let $\rho$ be
a smooth representation of $H$ such that 
$\pi = \cind_H^G \rho$ is admissible.   For any $t \in K\backslash G / H$, the subspace of $\cind_H^G\rho$ consisting of vectors supported on the double coset $Ht^{-1}K$ is $K$-invariant, and as a representation of $K$ is isomorphic to 
$\Ind_{K \cap \lconj{t}{H}}^K \lconj{t}{\sigma}$.
Thus we have
\begin{equation} \label{E:decomp}
\Res_K \cind_H^G \sigma \cong \bigoplus_{t \in K\backslash G / H} \Ind_{K \cap \lconj{t}{H}}^K \lconj{t}{\sigma}.
\end{equation}
\end{lemma}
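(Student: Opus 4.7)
The plan is to unpack the definition of $\cind_H^G\rho$ as a space of functions $f: G \to V_\rho$ satisfying $f(hg) = \rho(h)f(g)$ and having compact support modulo $H$, then decompose along the partition of $G$ into $(H,K)$-double cosets and identify each summand as an induced representation of $K$.

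First I would fix a set of representatives $\{t\}$ for $K\backslash G/H$ and observe that every function $f \in \cind_H^G\rho$ decomposes uniquely as a locally finite sum $f = \sum_t f_t$, where $f_t$ is the restriction of $f$ to the double coset $Ht^{-1}K$. Let $V_t$ denote the subspace of functions supported on $Ht^{-1}K$. Because right translation by $K$ preserves each double coset $Ht^{-1}K$, each $V_t$ is $K$-stable, and the decomposition $\cind_H^G\rho = \bigoplus_t V_t$ is a decomposition of $K$-representations.

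Next I would identify each $V_t$ with $\Ind_{K\cap \lconj{t}{H}}^K \lconj{t}{\rho}$. Set $H_t := K \cap \lconj{t}{H}$. Given $\phi \in V_t$, define $\tilde\phi: K \to V_\rho$ by $\tilde\phi(k) = \phi(t^{-1}k)$. A direct check using the equivariance $\phi(h'g) = \rho(h')\phi(g)$ shows that for $h' = tht^{-1} \in H_t$ we have $\tilde\phi(h'k) = \rho(h)\tilde\phi(k) = \lconj{t}{\rho}(h')\tilde\phi(k)$, so $\tilde\phi$ lies in $\Ind_{H_t}^K \lconj{t}{\rho}$. The assignment $\phi \mapsto \tilde\phi$ commutes with the right $K$-action, since right translation by $k_0$ on $G$ intertwines with right translation by $k_0$ on $K$ via $k \mapsto t^{-1}k$.

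For surjectivity and injectivity, I would define the inverse map: given $\psi \in \Ind_{H_t}^K \lconj{t}{\rho}$, set $\phi(ht^{-1}k) := \rho(h)\psi(k)$ on $Ht^{-1}K$ and zero elsewhere. The well-definedness reduces to the observation that an equality $ht^{-1}k = h't^{-1}k'$ forces $m := (h')^{-1}h \in H$ to satisfy $tmt^{-1} \in H_t$, and then the $H_t$-equivariance of $\psi$ gives exactly the needed consistency. The two constructions are mutually inverse by inspection, which yields the desired $K$-isomorphism $V_t \cong \Ind_{K\cap \lconj{t}{H}}^K \lconj{t}{\rho}$ and hence \eqref{E:decomp}.

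The proof is essentially formal and the only mildly delicate step is the well-definedness check in the inverse map; the hypothesis that $H$ is compact-mod-center and that $\cind_H^G\rho$ is admissible plays no role beyond ensuring that the sum in \eqref{E:decomp} makes sense as a decomposition of smooth $K$-representations, so no further obstacle is expected.
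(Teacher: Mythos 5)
The paper does not supply a proof of this lemma at all: it is stated ``for reference'' as a consequence of Mackey theory and cited to Kutzko's paper, so there is no in-paper argument to compare against. Your proof is correct and is precisely the standard Mackey decomposition argument for a compactly induced representation restricted to a compact open subgroup: partition $G$ into $(H,K)$-double cosets, observe that right translation by $K$ preserves each $Ht^{-1}K$, and transport functions on $Ht^{-1}K$ to functions on $K$ via $k\mapsto t^{-1}k$. Your equivariance and well-definedness checks are all correct, and the inverse construction is exactly right.

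One point you state but pass over quickly and is worth making explicit: the fact that $f=\sum_t f_t$ is a \emph{finite} sum (so that the right side of \eqref{E:decomp} is an algebraic direct sum) uses both that $\mathrm{supp}(f)$ is compact modulo $H$ and that $K$ is open, so that $H\backslash G/K$ is discrete and the support meets only finitely many double cosets. You attribute the finiteness to admissibility and the compact-mod-center hypothesis on $H$, but the mechanism is really the openness of $K$ together with compact support; admissibility is only needed for the individual summands $\Ind_{K\cap\lconj{t}{H}}^K\lconj{t}\rho$ to be, say, admissible $K$-representations (which is used implicitly later in the paper, not in this lemma). Also note that since $K$ is compact, the induction on the right side is automatically ``full'' induction, so no ambiguity arises there.
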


In our case, let $H = G_x$ and $K=G_y$, for vertices $x,y \in \A$ with $y$ special.
Given an irreducible supercuspidal representation $\pi = \cind_{G_x}^G \tau$ 
we therefore have
\begin{align*}
\Res_{G_y}\pi &= \Res_{G_y}\cind_{G_x}^G \tau \\
&\cong
\bigoplus_{t \in G_{y}\backslash G /G_{x} } \Ind_{G_{y} \cap \lconj{t}{G_{x}}}^{G_{y}} \lconj{t}{\tau}.
\end{align*}
By Proposition~\ref{P:doublecosetreps}, we may choose the representatives of 
$G_y\backslash G / G_x$ to be $\{t(\ell) \mid  \ell \in X_{x,y}^+\}$, whence $G_{y} \cap \lconj{t(\ell)}{G_{x}} = G_y \cap G_{x+\ell} = G_{[y,x+\ell]}$. 
Thus we may rewrite the sum above as 
\begin{equation} \label{E:decomp2}
\Res_{G_y}\pi  \cong \bigoplus_{\ell \in X_{x,y}^+} \Ind_{G_{[y,x+\ell]}}^{G_{y}}\lconj{t(\ell)}{\tau}.
\end{equation}
We refer to the representation $\pi_\ell = \Ind_{G_{[y,x+\ell]}}^{G_{y}}\lconj{t(\ell)}{\tau}$ as a \emph{Mackey component} of $\Res_{G_y}\pi$.  Note that this is not an irreducible representation in general.

Suppose from now on that $\tau$ has depth zero, and let us record some basic properties of the Mackey components $\pi_\ell$.

\begin{proposition} \label{P:degree}
Suppose $\ell \in X_{x,y}^+$ and set $\pi_\ell = \Ind_{G_{[y,x+\ell]}}^{G_y}\lconj{t(\ell)}{\tau}$.  For $v\in \A$ define
$$
\eta(v) = \sum_{\alpha \in \Phi : \alpha(v)>0} \lrc{\alpha(v)-1}.
$$
Then 
$$
\deg(\pi_\ell) = \deg(\tau) q^{\eta(x-y+\ell)} \left\vert \G_y/\Para_{x+\ell} \right\vert,
$$
where $\Para_{x+\ell}$ is the parabolic subgroup of $\G_y$ corresponding to the facet containing $[y,x+\ell]$.  If $\ell \in int(X_{x,y}^+)$ then $\Para_{x+\ell}$ is a Borel subgroup; if in addition $x$ is special then $\eta(x-y+\ell)=2\rho(x-y+\ell)-\vert \Phi^+ \vert$, where $\rho = \frac12 \sum_{\alpha \in \Phi^+}\alpha$.
\end{proposition}

\begin{proof}
We suppose $\ell \in X_{x,y}^+$ and compute $[G_y \colon G_{[y,x+\ell]}]$.  Let $F$ denote the facet of $\A$ containing $y$ in its closure meeting $[y,x+\ell]$; then $G_{[y,x+\ell]}\subseteq G_F \subseteq G_y$ and the image of $G_F$ in $\G_y$ is a parabolic subgroup which we denote $\Para_{x+\ell}$.  We have $[G_y:G_F]=[\G_y:\PP_{x+\ell}]$.   By Lemma~\ref{L:alcove} if $\ell \in int(X_{x,y}^+)$ then $F$ is an alcove and $\PP_{x+\ell}$ is a Borel subgroup; since these are all conjugate $[\G_y:\PP_{x+\ell}]$ is independent of $x+\ell$ in this case. 

The remaining factor $[G_{F}\colon G_{[y,x+\ell]}]$ can be computed directly (Proposition~\ref{P:stabilizer}).  Set $\Omega = [y,x+\ell]$; then for each $\alpha \in \Phi$ we have $f_\Omega(\alpha) = \max\{-\alpha(y), \lrc{-\alpha(x+\ell)}\}$.
If $\alpha(\ell+x-y)\geq 0$ then $f_\Omega(\alpha) = -\alpha(y) = f_F(\alpha)$; otherwise, $f_\Omega(\alpha)=\lrc{-\alpha(x+\ell)}$ whereas $f_F(\alpha)=-\alpha(y)+1$.  Therefore what remains in the quotient is 
\begin{equation} \label{E:index}
\vert G_{F}/G_{[y,x+\ell]} \vert = \prod_{\alpha \in \Phi:\alpha(\ell+x-y)>0} q^{\lrc{\alpha(x-y+\ell)-1}} = q^{\eta(x-y+\ell)}.
\end{equation}
If $x$ is special, then $\alpha(x-y+\ell)\in \mathbb{Z}$ for all roots $\alpha$.
\end{proof}

\begin{theorem} \label{T:depth}
Let $\ell \in X_{x,y}^+$.  Set 
$$r_0= \max\{ \beta(x-y+\ell) \mid \beta \in \Delta_x \}$$
 and
$$s_0 =  \max\{ \lrf{\alpha(x-y+\ell)} \mid \alpha \in \Phi \}.$$
Then the depth $d$ of an irreducible subrepresentation of  $\Ind_{G_{[y,x+\ell]}}^{G_y}\lconj{t(\ell)}{\tau}$ satisfies $r_0 \leq d \leq s_0$.
\end{theorem}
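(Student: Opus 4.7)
The strategy is to reduce the computation of $G_{y,r+}$-invariants of $\pi_\ell$ to invariants of $\tau$ on certain subgroups of $G_x$, exploiting that each filtration subgroup $G_{y,r+}$ is normal in $G_y$.  Normality collapses the Mackey decomposition, and after conjugating by $t(\ell)$ one obtains
\[
\pi_\ell^{G_{y,r+}} \cong V_\tau^{\,G_{[y-\ell,x]} \cap G_{y-\ell,r+}} \otimes \mathbb{C}\bigl[G_{y,r+}\backslash G_y/G_{[y,x+\ell]}\bigr].
\]
For $y$ special the filtration jumps at integers, so the bound $d \leq s_0$ will follow from showing $G_{y,s_0+}$ acts trivially on $\pi_\ell$, while $d \geq r_0$ will follow from $\pi_\ell^{G_{y,r_0}} = 0$.

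For the upper bound, I would prove $G_{y,s_0+} \subseteq G_{x+\ell,+}$.  Since $\sigma := \lconj{t(\ell)}{\tau}$ is trivial on $G_{x+\ell,+}$ and $G_{y,s_0+}$ is normal in $G_y$, this inclusion places $G_{y,s_0+}$ in $\ker(\pi_\ell)$.  I would verify the inclusion on the generators provided by Proposition~\ref{P:stabilizer}: on the torus factor, $s_0 \geq 0$ gives $S_{s_0+} \subseteq S_+$; on each affine root subgroup, the required containment reduces to the inequality $s_0 \geq \lrf{-\alpha(x-y+\ell)}$, which holds by the definition of $s_0$ upon noting that $-\alpha$ also lies in $\Phi$.

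For the lower bound, assume $r_0 \geq 1$ (the case $r_0 = 0$ is trivial).  I would prove $V_\tau^{J'} = 0$ for $J' := G_{[y-\ell,x]} \cap G_{y-\ell,r_0} \subseteq G_x$.  Since $\tau$ is inflated from a cuspidal representation of $\G_x = G_x/G_{x,+}$, it suffices to exhibit the unipotent radical of a proper parabolic of $\G_x$ inside the image $\overline{J'}$ of $J'$ in $\G_x$.  Applying Proposition~\ref{P:stabilizer} to the segment $[y-\ell,x]$, and using $\ell \in X_{x,y}^+$ to identify which endpoint of each linear root dominates the function $\lrc{-\alpha(\cdot)}$, one then intersects with $G_{y-\ell,r_0}$ and computes (for $r_0 \geq 1$) that $\overline{J'}$ is the unipotent subgroup of $\G_x$ generated by those $\GG_{-\beta}$ with $\beta \in \Phi_x^{lin,+}$ satisfying $\beta(x-y+\ell) \geq r_0$.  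Fix $\beta_0 \in \Delta_x$ realizing the maximum defining $r_0$, and set $I = \Delta_x \setminus \{\beta_0\}$.  For any $\beta \in \Phi_x^{lin,+}$ whose $\Delta_x$-expansion has $\beta_0$-coefficient $\geq 1$ (i.e., $\beta \notin \mathbb{Z}I$), one has $\beta(x-y+\ell) \geq r_0$ since each $\delta(x-y+\ell) \geq 0$ for $\delta \in \Delta_x$ by the definition of $X_{x,y}^+$.  Hence $\overline{J'}$ contains the unipotent radical $\U_I^-$ of the proper parabolic opposite to $\Para_I$, and cuspidality of $\tau$ yields $V_\tau^{J'} \subseteq V_\tau^{\U_I^-} = 0$.

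The main obstacle is the precise computation of $\overline{J'}$: one must track the Iwahori-type factorization of $G_{[y-\ell,x]}$ across the positive and negative linear roots, as well as the non-linear affine roots whose root groups are killed in $\G_x$, and the hypothesis $\ell \in X_{x,y}^+$ is exactly the condition that makes the positive-linear contribution collapse in the quotient, leaving the desired negative-unipotent subgroup.
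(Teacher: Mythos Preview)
Your proposal is correct and follows essentially the same approach as the paper: the upper bound via $G_{y,s_0+}\subseteq G_{x+\ell,+}$ is identical, and for the lower bound both arguments use normality of $G_{y,r_0}$ together with cuspidality of $\tau$ against the unipotent radical of the opposite parabolic $\Para_{\Delta_x\setminus\{\beta_0\}}^{op}$ for $\beta_0\in\Delta_x$ realizing $r_0$.  The only organizational difference is that the paper constructs a lift $H\subseteq G_x$ of this unipotent radical and checks directly that $\lconj{t(\ell)}{H}\subseteq G_{y,r_0}$, whereas you pass through the intersection $J'=G_{[y-\ell,x]}\cap G_{y-\ell,r_0}$; note that you only need (and only use) the containment $\overline{J'}\supseteq \U_I^-$, not the full computation of $\overline{J'}$, so your claim that $\overline{J'}$ \emph{equals} that unipotent subgroup is a slight overstatement but harmless.
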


\begin{proof}
Let $\ell \in X_{x,y}^+$ and set $\pi_\ell = \Ind_{G_{[y,x+\ell]}}^{G_y}\lconj{t(\ell)}{\tau}$.  If the space of $\tau$ is denoted $V_\tau$ then the space of $\pi_\ell$ is $V_\ell = \{f\colon G_y \to V_\tau \mid \forall h\in G_{[y,x+\ell]},\forall g\in G_y, f(hg)= \lconj{t(\ell)}{\tau}(h)f(g)\}$.  It suffices to prove that $V_\ell^{G_{y,r_0}} = \{0\}$ and $V_\ell^{G_{y,s_0+}} = V_\ell$. 

By construction, $\tau$ is trivial on $G_{x,+}$, and thus $\lconj{t(\ell)}{\tau}$ is trivial on $\lconj{t(\ell)}{G_{x,+}} = G_{x+\ell,+}$.  Given a nonnegative integer $s$, the subgroup $G_{y,s+}$ is contained in $G_{x+\ell,+}$ if and only if for each $\alpha \in \Phi$, we have $\lrc{(s-\alpha(y))+} \geq \lrc{-\alpha(x+\ell)+}$.  As $\alpha(y), \alpha(\ell)\in \mathbb{Z}$ this condition is equivalent to $s \geq \lrf{\alpha(y-x-\ell)}$.  Set $s_0=\max\{ \lrf{\alpha(x-y+\ell)} \mid \alpha \in \Phi \}$; this is nonnegative since $\alpha(x-y+\ell)\geq 0$ for $\alpha \in \Phi_x^{lin,+}$.   Thus  $G_{y,s_0+}$ is a normal subgroup of $G_y$ contained in the kernel $G_{x+\ell,+}$ of $\lconj{t(\ell)}{\tau}$, whence $V_\ell^{G_{y,s_0+}} = V_\ell$. 

Now let $\HH$ be the unipotent radical of a proper parabolic subgroup $\Para$ of $\G_x$.   Since $\tau$ is a cuspidal representation of the finite group $\G_x$, $V_\tau^{\HH} = \{0\}$.  Let $H \subseteq G_x$ be a subgroup satisfying $H/(H \cap G_{x,+}) = \HH$.  
Using elementary arguments, and the normality of $G_{y,r}$ in $G_y$,  one can show that if $\lconj{t(\ell)}{H} \subseteq G_{y,r}$ then $V_\ell^{G_{y,r}} = \{0\}$.

We choose our various suitable $\HH$ and $H$ as follows.  Each proper subset $\Delta'$ of $\Delta_x$ defines two proper parabolic subgroups of $\G_x$: the standard parabolic $\Para_{\Delta'}$ and its opposite $\Para^{op}_{\Delta'}$.  Let $\HH$ be the unipotent radical of $\Para^{op}_{\Delta'}$.  If $\Phi'$ is the subrootsystem of $\Phi_x^{lin}$ generated by $\Delta'$, then $\HH$ is spanned by the root subgroups of $\G_x$ corresponding to $\{-\alpha \mid \alpha \in \widetilde{\Phi}=\Phi_x^{lin,+}\setminus \Phi'\}$.
We may choose $H = \langle \GG_{-\alpha}(\ratk)_{x,0} \mid \alpha \in \widetilde{\Phi} \rangle \subseteq G_x$ as our lift of $\HH$.  Note that if $x$ is not special then $H$ is not necessarily contained in the unipotent radical of a parabolic subgroup of $G$.  

We have $\lconj{t(\ell)}{H} = \langle \GG_{-\alpha}(\ratk)_{x,\alpha(\ell)} \mid \alpha \in \widetilde{\Phi} \rangle$.  Thus $\lconj{t(\ell)}{H}\subseteq G_{y,r}$ if and only if for each $\alpha \in \widetilde{\Phi}$, $\lrc{r+\alpha(y)} \leq \lrc{\alpha(x+\ell)}$.  Since $\alpha$ takes integral values on $x,y$ and $\ell$, this simplifies to $r \leq \alpha(x-y+\ell)$.  Each simple root $\beta \in \Delta_x$ takes nonnegative values on $x-y+\ell$; by construction of $\widetilde{\Phi}$ we deduce that $\min\{\alpha(x-y+\ell)\mid \alpha \in \widetilde{\Phi}\}$ is attained on some simple root $\beta \in \Delta_x \setminus \Delta' \subseteq \widetilde{\Phi}$, whence $V_\ell^{G_{y,\beta(x-y+\ell)}} = \{0\}$.
Conversely, given $\beta \in \Delta_x$, choosing $\Delta' = \Delta_x \setminus \{\beta\}$ ensures that  $V_\ell^{G_{y,\beta(x-y+\ell)}} = \{0\}$.  We conclude that $r_0 = \max\{ \beta(x-y+\ell) \mid \beta \in \Delta_x\}$ has the property required.
\end{proof}

From the Harish-Chandra--Howe character formula one deduces 
\cite{BarbaschMoy1997, Savin1996} that for integral $m$, $\dim V_\pi^{G_{y,m}}$ grows as a polynomial in $q$ of degree $md_\pi+c$ where $d_\pi = \frac12 \dim(\mathcal{O})$ for a maximal orbit $\mathcal{O}$ in the wave front set of $\pi$ and $c$ is a constant depending on the orbit and the Harish-Chandra coefficients.  This integer $d_\pi$ is called the \emph{Gelfand-Kirillov (GK) dimension} of $\pi$.  One consequence of Theorem~\ref{T:depth} is a pair of bounds on the GK dimension of $\pi$.  We begin with a lemma.

In the following, given a representation $\varphi$ whose degree is expressed as a polynomial in $q$ of polynomial degree $n$, we set $d_q(\varphi)=n$.

\begin{lemma} \label{L:asymptotic}
Let $\tau, \pi$ be as above. Then for any $\ell \in X_{x,y}^+$ there is a unique $w \in W_0$ such that 
$$
d_q(\pi_\ell) = 2\rho(w(x-y+\ell)) + d_q(\tau) + \varepsilon(\ell)
$$
where $0 \leq \varepsilon(\ell) < \vert \Phi^+ \vert$, and $\varepsilon(\ell)=0$ if $x$ is special.
\end{lemma}

\begin{proof}
Recall the notation of Proposition~\ref{P:degree}.  There is a unique $w^{-1}\in \Upsilon_x \subset W_0$ such that $v := w(\ell+x-y) \in \overline{D}$.  Then $\eta(v) = \sum_{\alpha\in \Phi^+}\lrc{\alpha(v)} - n$ where $n = \vert \{ \alpha \in \Phi^+ \mid \alpha(v) \neq 0\} \vert$.  Taking into account the case that some of the $n$ terms $\alpha(v)$ are nonintegral it follows that $2\rho(v) -n \leq \eta(v)< 2\rho(v)$.  As $\vert \G_y/\Para_{x+\ell} \vert$ is a polynomial in $q$ of this same degree $n$, and $n \leq \vert \Phi^+ \vert$, the lemma follows.
\end{proof}

Write $r_0(\ell)$ and $s_0(\ell)$ for the lower and upper bounds on the depth of an irreducible component of $\pi_\ell$, respectively, as in Theorem~\ref{T:depth}.  It follows from the theorem that
$$
\bigoplus_{s_0(\ell)<m} \pi_\ell \subseteq V_\pi^{G_{y,m}} \subseteq \bigoplus_{r_0(\ell)< m} \pi_\ell.
$$
Since the number of terms in each sum is independent of $q$, it follows that for $q$ sufficiently large, $d_\pi(m) := d_q(V_\pi^{G_{y,m}})$ satisfies
$$
\max\{d_q(\pi_\ell) : s_0(\ell)<m \} \leq d_\pi(m) \leq \max\{d_q(\pi_\ell) : r_0(\ell)<m \}.
$$
From Lemma~\ref{L:asymptotic} it follows that for the purpose of estimating the GK-dimension $d_\pi$ of $\pi$, we may replace each $d_q(\pi_\ell)$ with $2\rho(w(\ell+x-y))$.

Thus it suffices to compute these maximi over each irreducible component of $\Phi$.  We assume for the remainder that $\Phi$ is irreducible and, to obtain strong explicit results, that $x$ is special.  Then $D_x=D$, $w=1$ and $\Delta_x=\Delta = \{ \alpha_1, \cdots, \alpha_n\}$.

Let $\alpha_0$ denote the highest root of $\Phi$ and write $\alpha_0=\sum_i c_i\alpha_i$; so  $s_0(\ell) = \alpha_0(x-y+\ell)$.  Write $2\rho = \sum_i \gamma_i \alpha_i$.  First note that by linearity, the extremum
$$
L_m:= \max\{2\rho(v) \mid v\in \overline{D}, \alpha_0(v)=m-1\}
$$
is attained at some vertex $v$ of $\overline{D} \cap \{v \mid \alpha_0(v)= m-1\}$.  These vertices correspond to $v_i$ such that $\alpha_j(v_i)=0$ unless $i=j$ and thus $\alpha_i(v_i)=(m-1)/c_i$.  Evaluating $2\rho$ on each of these vertices yields  
$L_m = (m-1) \max\{\gamma_i/c_i\mid 1\leq i \leq n\}$.   Fix $i$ at which this maximum is attained.  Since there is a constant radius $r$ (related to the size of a Voronoi cell of $X_\ast(S)$) such that for each $m$ sufficiently large, there is an $\ell \in X_{x,y}^+$ with $\vert x-y+\ell - v_i \vert < r$, we deduce by the linearity of $\rho$ that $L_m$ differs from $\max\{2\rho(x-y+\ell) \mid s_0(\ell)<m\}$ by a bounded factor which does not grow with $m$.  

Similarly, the maximum 
$$
U_m := \max\{2\rho(v)\mid \forall 1\leq i\leq n, 0 \leq \alpha_i(v)\leq m-1\}
$$
is attained when $\alpha_i(v)=m-1$ for all $i$, yielding $U_m = (m-1)\sum_i \gamma_i$.  Since $r_0(\ell)=\max\{\alpha_i(x-y+\ell)\}$,  we again deduce that $U_m$ differs from $\max\{2\rho(x-y+\ell) \mid r_0(\ell)<m\}$ by a factor which does not grow with $m$.

We have thus proven the following result.

\begin{corollary} \label{C:GK}
Suppose $\Phi$ is irreducible and $x$ is special.  Let $\alpha_0=\sum_i c_i\alpha_i$ and $2\rho=\sum_i\gamma_i\alpha_i$.  Then the GK-dimension $d_\pi$ of $\pi$ is bounded by
$$ 
\lfloor \max\{\gamma_i/c_i \mid 1 \leq i \leq n\}\rfloor \leq d_\pi \leq  \sum_i \gamma_i.
$$
\end{corollary}

When $x$ is non-special, a similar argument gives the same lower bound but, in general, a tighter upper bound.  

\begin{remark}
Note that, generally speaking, the upper bound in Corollary~\ref{C:GK} is cubic in the rank $n$ of $\Phi$ whereras the lower bound is quadratic in $n$. The GK-dimension, being half the dimension of a nilpotent adjoint orbit of $G$, is bounded above by $\vert \Phi^+ \vert$,  which is quadratic in $n$.  Consequently, Corollary~\ref{C:GK} 
 will not generally in and of itself suffice to identify the GK-dimension.
\end{remark}

\begin{example}
For $G=\SL(2,\ratk)$, with $y=0$, one always has $r_0=s_0$.  Indeed, the depths of the irreducible components of $\pi_\ell$ were shown to be exactly $\delta(\ell) = \alpha(x-y+\ell) = x+\alpha(\ell)$ in \cite[\S 5]{Nevins2013}; and evidently $d_\pi = 1$, corresponding to half the dimension of a principal nilpotent orbit.
\end{example}

\begin{example}
For $G=\mathrm{Sp}(4,\ratk)$, with $\Phi^+ = \{ \alpha, \beta, \alpha+\beta, 2\alpha+\beta\}$, if $x$ is the non-special vertex of $C$ then $\Delta_x = \{\beta, 2\alpha+\beta\}$.  Since the highest root of $\Phi$ is a simple root of $\Delta_x$, $r_0=s_0$ and  the depth of each irreducible subrepresentation of $\pi_\ell$ is exactly $\max\{ \beta(x-y+\ell), (2\alpha+\beta)(x-y+\ell) \} \in \mathbb{Z}$.  Via the argument of the proof of Corollary~\ref{C:GK} we deduce that $d_\pi = 3$.

If $x$ is special, however, then the lower and upper bounds given in Theorem~\ref{T:depth} cannot coincide, and Corollary~\ref{C:GK} yields only $3 \leq d_\pi \leq 7$.  In fact, the corresponding supercuspidal representations should be generic, with wave front set including a principal nilpotent orbit, implying $d_\pi=4$.
\end{example}


One further consequence of Theorem~\ref{T:depth} is a criterion for disjointness of representations of $G_y$ occuring as factors of different Mackey components.

\begin{corollary} \label{C:disjoint}
For $i=1,2$ let $x_i$ be vertices of $\A$ and $\tau_i$ cuspidal representations of $\G_{x_i}$.  Suppose $\ell_i \in X_{x_i,y}^+$ satisfy
$$
\max\{ \alpha(x_1-y+\ell_1) \mid \alpha \in \Phi \} < \max\{ \alpha(x_2-y+\ell_2) \mid \alpha \in \Delta_{x_2} \}.
$$
Then the two Mackey components
$$
\Ind_{G_{[y,x_1+\ell_1]}}^{G_y} \lconj{t(\ell_1)}{\tau_1} \quad \textrm{and} \quad  \Ind_{G_{[y,x_2+\ell_2]}}^{G_y} \lconj{t(\ell_2)}{\tau_2}
$$
are disjoint representations of $G_y$.
\end{corollary}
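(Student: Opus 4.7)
The plan is to deduce the disjointness directly from Theorem~\ref{T:depth} by producing, for each Mackey component, an interval containing the depths of all its irreducible constituents, and showing that the hypothesis forces these two intervals to be disjoint.

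For $i=1,2$, apply Theorem~\ref{T:depth} to the component $\pi_{\ell_i} = \Ind_{G_{[y,x_i+\ell_i]}}^{G_y}\lconj{t(\ell_i)}{\tau_i}$ to obtain bounds
$$
r_0^{(i)} = \max\{\beta(x_i-y+\ell_i)\mid \beta \in \Delta_{x_i}\} \quad \text{and} \quad s_0^{(i)} = \max\{\lrf{\alpha(x_i-y+\ell_i)} \mid \alpha \in \Phi\},
$$
so every irreducible subrepresentation of $\pi_{\ell_i}$ has depth in $[r_0^{(i)},s_0^{(i)}]$. Both $r_0^{(i)}$ and $s_0^{(i)}$ are integers: the former because $\beta \in \Delta_{x_i} \subseteq \Phi_{x_i}^{lin}$ takes integer values on $x_i$, and on $y,\ell_i$ by specialness of $y$ and the fact that $\ell_i \in X_\ast(S)$; the latter by the floor function.

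The heart of the argument is the inequality $s_0^{(1)} < r_0^{(2)}$. From $\lrf{\alpha(x_1-y+\ell_1)}\leq \alpha(x_1-y+\ell_1)$ for every $\alpha \in \Phi$ we get
$$
s_0^{(1)} \leq \max\{\alpha(x_1-y+\ell_1) \mid \alpha \in \Phi\} < r_0^{(2)},
$$
where the strict inequality is the stated hypothesis; since $s_0^{(1)}$ and $r_0^{(2)}$ are integers, this yields $s_0^{(1)} < r_0^{(2)}$ as required. Consequently the depth interval $[r_0^{(1)},s_0^{(1)}]$ for constituents of $\pi_{\ell_1}$ lies strictly below $[r_0^{(2)},s_0^{(2)}]$, so no irreducible constituent of $\pi_{\ell_1}$ can be isomorphic to a constituent of $\pi_{\ell_2}$, proving disjointness.

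There is really no obstacle here beyond bookkeeping; the only subtlety to check is the integrality remark that turns the strict inequality on the real-valued upper bound $\max_{\alpha\in\Phi}\alpha(x_1-y+\ell_1)$ into the needed strict inequality on $s_0^{(1)}$, and the verification that $r_0^{(2)} \in \mathbb{Z}$, which uses the particular restriction $\beta \in \Delta_{x_2}$ in the hypothesis. (Were the right-hand side defined using all of $\Phi$ instead of $\Delta_{x_2}$, this integrality would fail when $x_2$ is not special, and the bound coming from Theorem~\ref{T:depth} would no longer match.)
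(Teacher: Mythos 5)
Your proof is correct and is exactly the deduction the paper intends; the paper gives no explicit proof because, as it says just before stating the corollary, "Theorem~\ref{T:depth} gives an immediate criterion for disjointness," and your chain of inequalities $s_0^{(1)} \leq \max_{\alpha\in\Phi}\alpha(x_1-y+\ell_1) < r_0^{(2)}$ is that immediate deduction. One small remark: the integrality discussion is unnecessary — the displayed chain already gives $s_0^{(1)} < r_0^{(2)}$ directly by transitivity, with no need to pass through integers — and the closing parenthetical misattributes the reason the hypothesis uses $\Delta_{x_2}$: it is there simply because $\Delta_{x_2}$ is what appears in the definition of the lower bound $r_0$ in Theorem~\ref{T:depth}, not because of an integrality concern.
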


\section{Case of Deligne-Lusztig cuspidal representations} \label{S:DL}

Our main reference for this section is \cite{Carter1985}.  Recall that a minisotropic (maximal) torus $\T$ of $\G_x = G_{x}/G_{x,+}$ is one which is contained in no proper parabolic subgroup \cite[II.1.11]{SpringerSteinberg1970}.  
Writing $rk(\HH)$ for the $\resk$-rank of the group $\HH$ we set $\varepsilon =(-1)^{rk(\G)-rk(Z(\G_x))}$.

Let  $\T$ be a minisotropic maximal torus of $\G_x$ and $\theta$ a character of $\T$.  From this data P.~Deligne and G.~Lusztig constructed a virtual representation of $\G_x$ whose character we denote $R^{\G_x}_{\T}(\theta)$.
If $\theta$ is in \emph{general position} \cite[\S 7.3]{Carter1985}, then $\varepsilon R^{\G_x}_{\T}(\theta)$ is irreducible and cuspidal, and the corresponding representation $\tau$ is called a Deligne-Lusztig cuspidal representation.  This character is given on an element $h \in \G_x$ with Jordan decomposition $h=su$ by
$\varepsilon R^{\G_x}_{\T}(\theta)(h) = 0$ if $s$ is not conjugate to an element of ${\T}$ and otherwise by
\begin{equation} \label{E:charDL}
\varepsilon R^{\G_x}_{\T}(\theta)(h) = 
\frac{1}{\vert C_{\G_x}^\circ(s) \vert} \sum_{g \in \G_x, gsg^{-1}\in {\T}}\theta(gsg^{-1})Q_{g^{-1}\T g}^{C_{\G_x}^\circ(s)}(u) 
\end{equation}
where $Q_{g^{-1}\T g}^{C_{\G_x}^\circ(s)}$ denotes Green's function, which takes values in $\mathbb{Z}$ \cite[\S 7.6]{Carter1985}.  It is known that
\begin{equation} \label{E:degreeDL}
\deg(\varepsilon R^{\G_x}_{\T}(\theta)) = Q_{\T}^{\G_x}(1) = \frac{\vert \G_x \vert}{\vert \U_x \vert \vert \T \vert}
\end{equation}
where $\U_x$ denotes the unipotent radical of a Borel subgroup $\B_x$ of $\G_x$.

Let us now work towards understanding the Mackey components of the corresponding supercuspidal representation $\pi = \cind_{G_{x}}^G \tau$.  We begin with a general lemma.

\begin{lemma} \label{L:resp}
Let $\tau$ be a depth-zero representation of $G_x$ and $\ell \in X_{x,y}^+$.  Let $F\neq\{x\}$ be the facet of $\A$ which contains $x$ in its closure and meets $[y-\ell,x]$. Let $\Para = G_F/G_{x,+}$ be the parabolic subgroup of $\G_x$ whose inflation to $G_x$ is $G_{F}$.   Then the irreducible components of $\Res_{G_{[y-\ell,x]}}\tau$ coincide with those of $\Res_{\Para}\tau$.
\end{lemma}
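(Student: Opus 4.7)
The plan is to show that the composite $G_{[y-\ell,x]} \hookrightarrow G_x \twoheadrightarrow \G_x$ has image equal to $\Para$. Once this is established, $\tau$ (which factors through $\G_x$ since it is depth zero) has the property that $\Res_{G_{[y-\ell,x]}}\tau$ is the inflation along this surjection of $\Res_{\Para}\tau$, and inflation preserves irreducible decompositions.

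First I would verify that $G_{x,+} \subseteq G_F$, so that $\Para = G_F/G_{x,+}$ is a well-defined subgroup of $\G_x$. By the Moy-Prasad description, $G_{x,+}$ is generated by $S_1$ together with the subgroups $\GG_\alpha(\PP^{\lrc{-\alpha(x)+}})$ for $\alpha \in \Phi$. Since $F$ is a facet with $x$ in its closure, each affine root is either identically zero on $F\cup\{x\}$ or of a single strict sign on $F$, and a direct check with ceilings shows $\lrc{-\alpha(x)+} \geq \lrc{-\alpha(z)}$ for every $z \in F$, so each such generator lies in $G_z$ and hence in $G_F$.

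Next I would apply Proposition~\ref{P:stabilizer} to describe both $G_F$ and $G_{[y-\ell,x]}$ through their filtration functions $f_\Omega$. For $\alpha \in \Phi_x^{lin}$ (so $-\alpha(x) \in \mathbb{Z}$), the image of $\GG_\alpha(\PP^{f_\Omega(\alpha)})$ in $\G_x$ is the full $\alpha$-root subgroup of $\G_x$ precisely when $f_\Omega(\alpha) = -\alpha(x)$, and is trivial otherwise; for $\alpha \notin \Phi_x^{lin}$ the image is always trivial. Since $F$ meets $[y-\ell,x]$ in a neighbourhood of $x$, any $z \in F$ close to $x$ satisfies $z-x = s(y-\ell-x)$ for some small $s>0$, from which $f_F(\alpha) = -\alpha(x)$ is equivalent to $\alpha(x-y+\ell) \leq 0$. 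Because $y$ is special and $\ell \in X_\ast(S)$, $\alpha(y-\ell) \in \mathbb{Z}$, so $f_{[y-\ell,x]}(\alpha) = \max\{-\alpha(y-\ell),-\alpha(x)\}$ equals $-\alpha(x)$ under the very same condition $\alpha(x-y+\ell)\leq 0$. Combined with the fact that $S_0$ surjects onto the maximal torus of $\G_x$, this forces the images of $G_F$ and $G_{[y-\ell,x]}$ in $\G_x$ to coincide, and so to equal $\Para$.

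The main conceptual obstacle I anticipate is correctly identifying $\Para$ with the expected parabolic of $\G_x$ and matching the direction of $F$ from $x$ with the combinatorial condition $\alpha(x-y+\ell)\leq 0$; once these identifications are made the two computations via Proposition~\ref{P:stabilizer} are parallel, and the integrality coming from $y$ special and $\ell \in X_\ast(S)$ makes the ceiling functions disappear in both.
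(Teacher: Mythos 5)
Your proof is correct and follows essentially the same route as the paper's: both arguments reduce to showing that $G_F$ and $G_{[y-\ell,x]}$ have the same image in $\G_x = G_x/G_{x,+}$, by observing (via Proposition~\ref{P:stabilizer}) that which root subgroups of $\G_x$ survive is governed by the signs of the affine roots vanishing at $x$ on $\Omega$, and these signs agree for $\Omega = F$ and $\Omega = [y-\ell,x]$ because $F$ meets the segment near $x$. The only loose phrasing is the claim that every $z\in F$ near $x$ lies on the segment $[y-\ell,x]$ — $F$ generally has higher dimension — but since each affine root has constant sign on the facet $F$, testing the sign along the segment direction suffices, so the conclusion stands.
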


\begin{proof}
Since $x \in \overline{F}$ and $F$ is a facet we have $G_{x,+} \subseteq G_{F}$ and $G_F/G_{x,+}$ is indeed a parabolic subgroup of $G_{x}/G_{x,+}$.  Moreover, $G_F/G_{x,+} \cong G_{[y-\ell,x]}/(G_{[y-\ell,x]}\cap G_{x,+})$ since these quotients are uniquely determined by the vanishing of the same affine roots.
So let $\Omega \in \{F, [y-\ell,x]\}$.  
Since $\tau$ is the inflation of a representation, say for the moment $\overline{\tau}$, which is trivial on $G_{x,+}$, $\Res_{G_\Omega/(G_\Omega \cap G_{x,+})}\overline{\tau}$ and $\Res_{G_\Omega}\tau$ have the same irreducible components, and the lemma follows.
\end{proof}

Thus the determination of the decomposition into irreducible subrepresentations of each Mackey component of $\pi$ implies first determining that of the restriction of a cuspidal representation of $\G_x$ to a parabolic subgroup --- a highly nontrivial open problem in general.  Nevertheless, one can deduce some results in an important special case.

Let $\B_x=\Sres\U_x$ be a standard Borel subgroup of $\G_x$.  Then the Jordan decomposition of any $h\in \B_x$ is $h=su$ with $s \in \Sres$ and $u\in \U_x$.  But such an $s$ is conjugate to an element of the minisotropic torus $\T$ if and only if $s \in Z(\G_x)$, since $\T$ cannot contain a split subtorus outside of the center.
Consequently $C_{\G_x}^\circ(s) = \G_x$.   Green's function depends only on the conjugacy class of $\T$ within $C_{\G_x}^\circ(s)$ and so in this case, is simply $Q_{\T}^{\G_x}$.  Since $s$ is central, $gsg^{-1}=s$, and 
the character formula from \eqref{E:charDL} simplifies to
\begin{equation} \label{E:DeligneLusztigrestrict}
\Res_{\B_x} \varepsilon R^{\G_x}_{\T}(\theta)(su) = \begin{cases}
0 & \textrm{if $s \notin Z(\G_x)$},\\
\theta(s)Q_{\T}^{\G_x}(u) & \textrm{otherwise}.
\end{cases}
\end{equation}
An immediate consequence of this calculation is the following lemma.

\begin{lemma}\label{L:borelDL}
The restriction of a Deligne-Lusztig cuspidal representation $\varepsilon R^{\G_x}_{\T}(\theta)$ to a Borel subgroup of $\G_x$ depends only on the choice of minisotropic torus $\T$ (up to conjugacy) and the restriction of $\theta$ to the center $Z(\G_x)$.
\end{lemma}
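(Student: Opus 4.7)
The plan is to read the statement directly off the character formula \eqref{E:DeligneLusztigrestrict} derived in the paragraph preceding the lemma. Since two smooth representations of a finite group are isomorphic if and only if they have the same character, it suffices to show that the character of $\Res_{\B_x}\varepsilon R^{\G_x}_\T(\theta)$, viewed as a class function on a standard Borel $\B_x = \Sres\U_x$, is determined by $\T$ and $\theta|_{Z(\G_x)}$ alone.

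Inspecting the right-hand side of \eqref{E:DeligneLusztigrestrict}: for $h = su \in \B_x$ with $s \notin Z(\G_x)$, the character value is zero, independently of both $\T$ and $\theta$. For $s \in Z(\G_x)$, the value is $\theta(s)\,Q_\T^{\G_x}(u)$. Here the Green function $Q_\T^{\G_x}$ is intrinsic to $\T$ as a subgroup of $\G_x$, while the factor $\theta(s)$ depends only on the restriction of $\theta$ to $Z(\G_x)$. This establishes the claim for the standard Borel $\B_x$. For an arbitrary Borel subgroup of $\G_x$ one conjugates it inside $\G_x$ to a standard Borel (all $\resk$-rational Borels are $\G_x$-conjugate, by Lang's theorem) and observes that conjugate subgroups carry isomorphic restrictions.

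The substantive work has already been done in deriving \eqref{E:DeligneLusztigrestrict}: the minisotropic hypothesis was used to force every semisimple element of $\Sres$ that is $\G_x$-conjugate to an element of $\T$ to lie in $Z(\G_x)$, thereby collapsing the double sum of \eqref{E:charDL} to a single term. There is no real obstacle remaining; the only subtlety worth articulating is that ``depends only on'' is to be interpreted as meaning that the isomorphism class of the restriction is determined, which is exactly what equality of characters delivers in the finite-group setting.
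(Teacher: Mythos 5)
Your argument is correct and is exactly the paper's: the paper derives \eqref{E:DeligneLusztigrestrict} in the preceding paragraph and then states the lemma as "an immediate consequence of this calculation," which is the same read-off of the character formula that you perform (your additional remarks about conjugating an arbitrary Borel to the standard one, and about equality of characters giving isomorphism, are correct and merely spell out details the paper leaves implicit).
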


\begin{remark} \label{R:self}
In general $\Res_{\B_x} \varepsilon R^{\G_x}_{\T}(\theta)$ is not irreducible; in fact its self-intertwining number is
\begin{align*}
\langle \varepsilon R^{\G_x}_{\T}(\theta), \varepsilon R^{\G_x}_{\T}(\theta) \rangle_{\B_x} &=
\frac{1}{\vert {\B_x} \vert} \sum_{s\in Z(\G_x), u \in \U_x}\vert \theta(s) \vert^2 \vert Q_{\T}^{\G_x}(u)\vert^2 \\ 
&= \frac{\vert Z(\G_x) \vert}{\vert {\B_x} \vert} \sum_{u \in \U_x}Q_{\T}^{\G_x}(u)^2.
\end{align*}
\begin{example} 
If $\G_x = \SL(2,\resk)$ then we determined in \cite{Nevins2013} that this intertwining number is $2 = \vert Z(\G_x)\vert$.
\end{example}
\begin{example} \label{Example:SL3self}
If $\G_x = \SL(3,\resk)$ then we compute directly that $\sum_{u \in \U_x}Q_{\T}^{\G_x}(u)^2=q^4(q-1)^2$ and hence that the intertwining number of $\Res_{\B_x} R^{\G_x}_{\T}(\theta)$ with itself is $\vert Z(\G_x) \vert q$, where $\vert Z(\G_x) \vert=3$ if $3$ divides $q-1$.  
\end{example}
\end{remark}

\begin{theorem}\label{T:same}
Let $x,y$ be vertices of $\A$ with $y$ special.  Let $\tau_1$ and $\tau_2$ be two Deligne-Lusztig cuspidal representations of $\G_x$, induced from the same minisotropic torus and with the same central character.  Let $\pi_i=\cind_{G_x}^G\tau_i$ be the corresponding depth-zero supercuspidal representations of $G$, for $i=1,2$.
Then for each $\ell \in int(X_{x,y}^+)$, the Mackey components of $\Res_{G_y}\pi_i$ corresponding to $\ell$ coincide for $i=1,2$.  That is, we have
\begin{equation} \label{E:same}
\Ind_{G_{[y,x+\ell]}}^{G_{y}}\lconj{t(\ell)}{\tau_1} \cong\Ind_{G_{[y,x+\ell]}}^{G_{y}}\lconj{t(\ell)}{\tau_2}.
\end{equation}
\end{theorem}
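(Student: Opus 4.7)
The plan is to reduce the asserted identity of Mackey components to an identity of restrictions of $\tau_i$ to a Borel subgroup of $\G_x$, and then invoke Lemma~\ref{L:borelDL}.

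First I note that the group isomorphism $h \mapsto t(\ell) h t(\ell)^{-1}\colon G_{[y-\ell, x]} \to G_{[y, x+\ell]}$ transports $\tau_i|_{G_{[y-\ell, x]}}$ to $\lconj{t(\ell)}{\tau_i}|_{G_{[y, x+\ell]}}$.  Consequently, an isomorphism $\tau_1|_{G_{[y-\ell, x]}} \cong \tau_2|_{G_{[y-\ell, x]}}$ induces an isomorphism of the corresponding inductions to $G_y$, so it suffices to establish the former.

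By Lemma~\ref{L:alcove} applied to $\ell \in int(X_{x,y}^+)$, the convex closure of $[y, x+\ell]$ contains a unique alcove adjacent to $x+\ell$; translating by $t(-\ell)$, which is an affine isometry of $\A$ preserving the polysimplicial structure, shows that the convex closure of $[y-\ell, x]$ contains a unique alcove adjacent to $x$.  Hence the facet $F$ of Lemma~\ref{L:resp} that contains $x$ in its closure and meets $[y-\ell, x]$ is an alcove, so the associated parabolic $\Para = G_F/G_{x,+}$ is a Borel subgroup $\B_x$ of $\G_x$.  By Lemma~\ref{L:resp}---whose proof shows that $\Res_{G_{[y-\ell,x]}} \tau_i$ and $\Res_{\B_x} \tau_i$ factor through a common quotient map $G_{[y-\ell, x]} \twoheadrightarrow \B_x$ and are identified via it---the representation $\tau_i|_{G_{[y-\ell, x]}}$ is determined by $\tau_i|_{\B_x}$.

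Finally, Lemma~\ref{L:borelDL} applies directly: since $\tau_1$ and $\tau_2$ arise from the same minisotropic torus $\T$ and have the same central character, $\tau_1|_{\B_x} \cong \tau_2|_{\B_x}$, and pulling back yields $\tau_1|_{G_{[y-\ell, x]}} \cong \tau_2|_{G_{[y-\ell, x]}}$, which establishes \eqref{E:same}.  The main subtlety is ensuring that $\Para$ is a Borel, which is precisely what the interior hypothesis on $\ell$ guarantees via Lemma~\ref{L:alcove}; on $\partial(X_{x,y}^+)$, $\Para$ may be a larger standard parabolic of $\G_x$, and the restriction of a Deligne-Lusztig cuspidal representation to such a parabolic is no longer determined by its minisotropic torus and central character alone, so this argument does not extend beyond the interior.
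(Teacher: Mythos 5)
Your argument is correct and follows essentially the same path as the paper's proof: reduce via $t(\ell)$-conjugation to comparing $\Res_{G_{[y-\ell,x]}}\tau_i$, use Lemma~\ref{L:alcove} (translated) to identify the relevant facet as an alcove so that the parabolic in Lemma~\ref{L:resp} is a Borel, and then apply Lemma~\ref{L:borelDL}. The closing remark about why the argument fails on $\partial(X_{x,y}^+)$ is accurate and consistent with the paper's subsequent treatment in Corollary~\ref{C:allsame}.
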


\begin{proof}
The induced representation
$\Ind_{G_{[y,x+\ell]}}^{G_{y}}\lconj{t(\ell)}{\tau_i}$ is determined by 
$\Res_{G_{[y,x+\ell]}}\lconj{t(\ell)}{\tau_i}$.  Conjugating by $t(\ell)^{-1}$  we deduce that \eqref{E:same} would follow from
\begin{equation} \label{E:ressame}
\Res_{G_{[y-\ell,x]}}\tau_1 \cong \Res_{G_{[y-\ell,x]}}\tau_2.
\end{equation}
By Lemma~\ref{L:alcove}, the geodesic $[y,x+\ell]$ meets a unique alcove $\Gamma'$ adjacent to $x+\ell$; thus $\Gamma=\Gamma'-\ell$ is an alcove adjacent to $x$ meeting $[y-\ell,x]$.  It follows that the group $G_\Gamma/G_{x,+}$ is a Borel subgroup $\B_x$ of $\G_x$ and thus by Lemma~\ref{L:resp}, \eqref{E:ressame} is equivalent to the condition that $\Res_{\B_x}\tau_1 \cong \Res_{\B_x}\tau_2$; this follows from Lemma~\ref{L:borelDL} by our hypotheses on $\tau_1$ and $\tau_2$.
\end{proof}

Similarly, for $\ell \in \partial(X_{x,y}^+)$, the relationship between corresponding Mackey components depends on the restriction of $\tau_i$ to a parabolic subgroup $\Para$.  Since $\T \cap \Para$ is central unless they share a common anisotropic subtorus, these boundary Mackey components will also often coincide.  In an extreme case, we can therefore say much more. 

\begin{corollary} \label{C:allsame}
Suppose we are in the setting of Theorem~\ref{T:same} and suppose additionally  that 
$\T$ has the property that $\T \cap \Para = Z(\G_x)$ for all proper parabolic subgroups $\Para$ of $\G_x$.   
Then if $y$ and $x$ are not conjugate under $G$ we have 
$$
\Res_{G_y}\pi_1 \cong \Res_{G_y}\pi_2,
$$ 
whereas if $y= x$ then there exists a representation $W$ of $G_y$ such that we can write
$$
\Res_{G_y}\pi_i \cong \tau_i \oplus W
$$
for $i=1,2$, with $W$ common to both.
\end{corollary}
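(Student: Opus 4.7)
I would compare $\Res_{G_y}\pi_1$ and $\Res_{G_y}\pi_2$ Mackey-component by Mackey-component via \eqref{E:decomp2}. Theorem~\ref{T:same} already gives the isomorphism of $\pi_{\ell,1} := \Ind_{G_{[y,x+\ell]}}^{G_y}\lconj{t(\ell)}{\tau_1}$ and $\pi_{\ell,2}$ for every $\ell \in int(X_{x,y}^+)$, so the task is to handle the boundary indices $\ell \in \partial(X_{x,y}^+)$ and then assemble.

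The key new input is to strengthen Lemma~\ref{L:borelDL} from Borel subgroups to arbitrary proper parabolic subgroups $\Para$ of $\G_x$, using the extra hypothesis on $\T$. For semisimple $s \in \Para$, if $g s g^{-1} = t \in \T$ for some $g \in \G_x$, then $t \in \T \cap (g\Para g^{-1})$; since $g\Para g^{-1}$ is itself a proper parabolic, the hypothesis forces $t \in Z(\G_x)$, whence $s = t \in Z(\G_x)$ and $C_{\G_x}^\circ(s)=\G_x$. Applying this to the Deligne-Lusztig formula \eqref{E:charDL} collapses it exactly as in \eqref{E:DeligneLusztigrestrict}: for $h = su \in \Para$, the value $\varepsilon R^{\G_x}_\T(\theta_i)(h)$ equals $\varepsilon\theta_i(s)Q_\T^{\G_x}(u)$ when $s \in Z(\G_x)$ and is zero otherwise. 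Since $\theta_1$ and $\theta_2$ agree on $Z(\G_x)$, the characters of $\Res_\Para\tau_1$ and $\Res_\Para\tau_2$ coincide, and the representations are isomorphic.

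For each $\ell \in \partial(X_{x,y}^+)$ with $\ell \neq y-x$, the segment $[y-\ell,x]$ is nondegenerate, and the facet $F$ of $\A$ containing $x$ in its closure and meeting $[y-\ell,x]$ is distinct from $\{x\}$, so $\Para := G_F/G_{x,+}$ is a proper parabolic of $\G_x$. Lemma~\ref{L:resp} identifies the constituents of $\Res_{G_{[y-\ell,x]}}\tau_i$ with those of $\Res_\Para\tau_i$; the previous paragraph then gives $\Res_{G_{[y-\ell,x]}}\tau_1 \cong \Res_{G_{[y-\ell,x]}}\tau_2$, and the reduction used in the proof of Theorem~\ref{T:same} (conjugation by $t(\ell)^{-1}$) promotes this to $\pi_{\ell,1}\cong\pi_{\ell,2}$.

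To conclude, I separate the two stated cases. If $y$ and $x$ are not $G$-conjugate, then $y-x \notin X_\ast(S)$, so no $\ell \in X_{x,y}^+$ equals $y-x$; every Mackey component agrees for $i=1,2$, giving $\Res_{G_y}\pi_1 \cong \Res_{G_y}\pi_2$. If $y = x$, the only unaccounted-for index is $\ell = 0 = y-x$; since $[y,x+\ell]=\{y\}$ we have $\pi_{0,i} = \tau_i$, and setting $W := \bigoplus_{\ell \in X_+\setminus\{0\}}\pi_{\ell,i}$ (independent of $i$ by the above) gives $\Res_{G_y}\pi_i \cong \tau_i \oplus W$. The main obstacle is the upgraded character computation on proper parabolics, and specifically leveraging the fact that the hypothesis is quantified over all proper parabolics (hence also over each conjugate $g\Para g^{-1}$) in order to force $s \in Z(\G_x)$; once this is in hand, the remainder is a direct assembly of Lemma~\ref{L:resp}, Theorem~\ref{T:same}, and \eqref{E:decomp2}.
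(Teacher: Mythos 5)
Your proof is correct and follows the same structure as the paper's own argument: extend the character collapse of Lemma~\ref{L:borelDL} to all proper parabolics $\Para$ via the hypothesis $\T\cap\Para=Z(\G_x)$ (applied to the conjugates $g\Para g^{-1}$), feed this through Lemma~\ref{L:resp} and the Mackey decomposition \eqref{E:decomp2}, and isolate the exceptional index $\ell=y-x$. The paper leaves the conjugation step ``$gsg^{-1}\in\T\cap g\Para g^{-1}\Rightarrow s\in Z(\G_x)$'' implicit under ``the method of Lemma~\ref{L:borelDL} applies equally,'' whereas you spell it out; otherwise the arguments coincide.
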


\begin{proof}
Under the given hypotheses, the method of the proof of Lemma~\ref{L:borelDL} applies equally to the restriction of $\tau_i$ to any proper parabolic subgroup $\Para$, and we conclude that $\Res_{\Para}\tau_1 \cong \Res_{\Para}\tau_2$.  Therefore, for each $\ell \in X^+_{x,y}$ for which $y-\ell \neq x$, we may apply Lemma~\ref{L:resp} to conclude that $\Res_{G_{[y-\ell,x]}}\tau_1 \cong \Res_{G_{[y-\ell,x]}}\tau_2$ and hence that the corresponding Mackey components coincide, that is, for all such $\ell$, \eqref{E:same} holds.
If $x$ and $y$ lie in distinct orbits under $W$, then $y-\ell\neq x$ holds for all $\ell \in X_{x,y}^+$.  Otherwise, we may without loss of generality assume $y=x$, 
in which case the single non-shared Mackey component is simply
$\Ind_{G_{y}}^{G_{y}}\tau_i = \tau_i$.
\end{proof}

\begin{remark}
The tori satisfying the hypotheses of Corollary~\ref{C:allsame} are rare, as pointed out by the anonymous referee.  It is easy to prove that a minisotropic torus $\T$ satisfies the hypotheses of Corollary~\ref{C:allsame} if and only if each of its $\resk$-rational noncentral subtori are regular.  Recall (see for example \cite{Carter1985}) that one can parametrize all conjugacy classes of $\resk$-tori of $\G_x$ by conjugacy classes in its Weyl group.  Given $w\in W_x^{lin}$ with characteristic polynomial $\chi_w$ and corresponding torus $\T_w$, 
the $\resk$-rational subtori are parametrized by the $w$-invariant sublattices of $X_\ast(S)$, which in turn correspond to the polynomial factors of $\chi_w$ over $\mathbb{Q}$, and in fact $\vert \T_w \vert = \chi_w(q)$. 

 Thus in particular, if $\chi_w$ is irreducible then $\T$ has the desired property.  For classical groups (and also special and general linear groups), one may use the explicit description of these tori given in \cite[\S 1]{Morris1990} to see this condition is also necessary.  For exceptional groups, M.~Reeder has identified many properties of those tori $\T_w$ whose \emph{minimal} polynomial is irreducible; these $w$ he calls \emph{cyclotomic} \cite{Reeder2011}.
\end{remark}

We conclude with two examples.

\begin{example} \label{Example:SL3tori}
If $n$ is prime, the group $\G=\SL(n,\resk)$ has a unique maximal anisotropic torus $\T$ corresponding to the norm one elements of the field extension of $\resk$ of degree $n$, which has order $\Phi_n(q)$, where $\Phi_n$ is the $n$th cyclotomic polynomial.  Thus it has no proper subtori, as one can also deduce from the lack of intermediate extension fields in this case, and hence cannot meet any proper rational Levi subgroup outside $Z(\G)$.
\end{example}

\begin{example}
Let $\G=\mathrm{Sp}(4,\resk)$.  Then $\G$ has two maximal anisotropic tori up to conjugacy \cite[II.4--8]{Springer1970}.  The Coxeter torus $\T_{w_0}$ has order $q^2+1$ and one concludes that it cannot meet a proper parabolic subgroup except in the center of $\G$.  

On the other hand the anisotropic torus $\T_{-1}$ corresponds to the element $w=-1$ in the Weyl group, and has order $(q+1)^2$.  It is  isomorphic to $N_1(\extresk)^2$ where $N_1(\extresk)$ is the group of norm-one elements of a quadratic extension $\extresk$ of $\resk$.   In  \cite{Srinivasan1968} this torus is explicitly described as the subgroup $H_4 = \langle a_4\rangle\times \langle b_4 \rangle$ and one can see directly that neither generator is regular; for example $a_4$ lies in a parabolic subgroup with Levi component isomorphic to $\SL(2,\resk)\times \GL(1,\resk)$.
\end{example}

\section{Intertwining with principal series} \label{S:intertwining}

Let $\chi$ be a depth-zero character of $S$. 
Construct the parabolically induced representation $\Ind_B^G \chi$; this is a depth-zero (possibly reducible) principal series representation of $G$.   We denote by $V$ the space of $\Ind_B^G \chi$. 
For any special vertex $y$,  $G = BG_{y}$ so we have
\begin{equation} \label{E:principalseries}
\Res_{G_{y}}\Ind_{B}^G \chi 
\cong \Ind_{B \cap G_{y}}^{G_{y}} \chi_0
\end{equation}
where $\chi_0$ denotes the restriction of $\chi$ to $B \cap G_y$.  Note that twisting $\chi$ by any unramified character produces the same restriction to $G_y$; this holds in particular for the modular character which appears in our normalized induction  $\Ind_B^G \chi$.

\subsection{Some subrepresentations} \label{SS:filtration}

The nature of parabolic induction is such that it is easier to construct a filtration of $V$ by $G_y$-invariant subspaces than a direct sum decomposition.

\begin{lemma} \label{L:omega}
Let $\Omega$ be a bounded convex closed subset of $\A$ satisfying $\overline{C} \subseteq \Omega \subset \overline{D}$.  Then $\chi_0$ extends trivially to a character of $G_{y+\Omega}$ and $\Ind_{G_{y+\Omega}}^{G_{y}}\chi_0$ is a subrepresentation of $\Ind_{B \cap G_{y}}^{G_{y}} \chi_0$.  Let $V_{y+\Omega}$ denote the space of this representation; it is finite-dimensional.  If $\Omega' \supseteq \Omega$ is another such set, then $V_{y+\Omega'} \supseteq V_{y+\Omega}$.
\end{lemma}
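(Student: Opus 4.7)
My approach will be to sandwich $G_{y+\Omega}$ between the two natural subgroups $B \cap G_y$ and the Iwahori subgroup $I := G_{y+\overline{C}}$ attached to the alcove $y+C$, and to define the extension of $\chi_0$ as the restriction of a canonical character of $I$. The first task is to establish the chain $B \cap G_y \subseteq G_{y+\Omega} \subseteq I$. The right inclusion is immediate from $\overline{C} \subseteq \Omega$, since pointwise stabilizers of larger sets are smaller. For the left inclusion, I would apply Proposition~\ref{P:stabilizer}: the hypothesis $\Omega \subseteq \overline{D}$ combined with $0 \in \overline{C} \subseteq \Omega$ forces $f_{y+\Omega}(\alpha) = -\alpha(y)$ for each $\alpha \in \Phi^+$ (the maximum of $\lrc{-\alpha(x)}$ over $x \in y+\Omega$ is attained at $x=y$), so the positive-root contribution to $G_{y+\Omega}$ coincides with that of $B \cap G_y$; together with $S_0 \subseteq G_{y+\Omega}$ this yields $B \cap G_y \subseteq G_{y+\Omega}$.

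For the extension of $\chi_0$, I would exploit that both $B \cap G_y$ and $I$ have the same image in the reductive quotient $\G_y$: each surjects onto the Borel subgroup $\B$ of $\G_y$ with kernel equal to its intersection with $G_{y,+}$. Since $\chi$ has depth zero and is trivial on the unipotent radical of $B$, the restriction $\chi_0$ annihilates $B \cap G_{y,+}$ and so descends to a character $\overline{\chi}_0$ of $\B$. Inflating back through $I \twoheadrightarrow I/G_{y,+} \cong \B$ produces a character $\widetilde{\chi}_0$ of $I$ which, by naturality of the parahoric quotient, agrees with $\chi_0$ on $B \cap G_y$; its restriction to $G_{y+\Omega}$ is the required extension. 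It is \emph{trivial} in the sense that the negative-root contribution to $G_{y+\Omega}$ from Proposition~\ref{P:stabilizer} lies entirely in $G_{y,+}$, hence in $\ker(\widetilde{\chi}_0)$.

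The three remaining claims are then essentially formal. The subrepresentation inclusion holds because a function on $G_y$ transforming under the larger subgroup $G_{y+\Omega}$ by $\widetilde{\chi}_0$ automatically transforms under $B \cap G_y$ by $\chi_0$; this provides a $G_y$-equivariant embedding $\Ind_{G_{y+\Omega}}^{G_y}\widetilde{\chi}_0 \hookrightarrow \Ind_{B\cap G_y}^{G_y}\chi_0$. Finite-dimensionality is just $\dim V_{y+\Omega} = [G_y : G_{y+\Omega}] < \infty$, since $G_y$ is compact and $G_{y+\Omega}$ is an open subgroup. Finally, monotonicity follows from the reversal $G_{y+\Omega'} \subseteq G_{y+\Omega}$ whenever $\Omega \subseteq \Omega'$, under which the equivariance condition imposed on functions in $V_{y+\Omega}$ weakens, yielding $V_{y+\Omega} \subseteq V_{y+\Omega'}$.

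The only step requiring genuine care is the construction of the extension. The main obstacle I expect is verifying that the two identifications $(B \cap G_y)/(B \cap G_{y,+}) \cong \B \cong I/G_{y,+}$ are compatible, so that $\widetilde{\chi}_0$ really restricts to $\chi_0$; this should reduce to unwinding the pinning-dependent definitions of the root-group filtrations at $y$ reviewed in Section~\ref{SS:structuretheory}.
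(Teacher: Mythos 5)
Your proof is correct and structurally mirrors the paper's: both sandwich $G_{y+\Omega}$ between $B\cap G_y$ and the Iwahori $I = G_{y+\overline{C}}$ and then extend $\chi_0$ to $I$. The one genuine difference is in how the extension is built. The paper applies Proposition~\ref{P:stabilizer} directly to get the factorization $G_{y+C} = S_0 U_{y+C}$ with $S_0$ normalizing $U_{y+C}$ and $S_0\cap U_{y+C}=S_1\subseteq\ker\chi_0$, and defines the extension to be trivial on $U_{y+C}$; this makes the extension and its agreement with $\chi_0$ on $B\cap G_y$ immediate, with no compatibility of identifications to verify. You instead pass to the reductive quotient $\G_y$, descend $\chi_0$ to the Borel $\B$, and inflate through $I\twoheadrightarrow I/G_{y,+}\cong\B$. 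That also works, but — as you correctly flagged — it introduces the extra obligation of checking that $(B\cap G_y)/(B\cap G_{y,+})\cong\B\cong I/G_{y,+}$ are the \emph{same} identification (both are induced by the single reduction map $G_y\to\G_y$, so this is true, but it is an additional thing to say). The paper's route is slightly leaner precisely because it stays inside $G_y$ and never invokes the quotient; your route has the mild advantage of making the ``triviality'' of the extension more visibly a statement about $G_{y,+}$.
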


\begin{proof}
By Proposition~\ref{P:stabilizer}, we can write $G_{y+C}$ as $S_0U_{y+C}$ where $S_0$ normalizes $U_{y+C}$.  Since $\chi_0$ is trivial on $S_0 \cap U_{y+C} = S_1$, it extends to a character of $G_{y+C}$, trivial on $U_{y+C}$, which coincides with $\chi_0$ on $B\cap G_y$.  Denote again by $\chi_0$ the restriction of this character to any subgroup of $G_{y+C}$.   Since $\overline{C} \subseteq \Omega \subset \overline{D}$, we have $B\cap G_y \subset G_{y+\Omega} \subseteq G_{y+C}$.  The rest follows.
\end{proof}

We have the following estimates relating to the depth and degree of $V_{y+\Omega}$. 

\begin{proposition} \label{P:depthps}
Suppose $n$ is a positive integer.  If $y+\Omega \subseteq \Omega_y(\A,n)$ then $V_{y+\Omega} \subseteq V^{G_{y,n}}$ and the depth of any irreducible subrepresentation of $V_{y+\Omega}$ is strictly less than $n$.  
Moreover, $\dim(V^{G_{y,n}}) = \vert \G_y / \B \vert\;q^{(n-1)\vert \Phi^+\vert}$ for any Borel subgroup  $\B$ of $\G_y$.
\end{proposition}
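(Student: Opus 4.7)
The plan is to tackle the three assertions in order.

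For $V_{y+\Omega}\subseteq V^{G_{y,n}}$: since $G_{y,n}$ is normal in $G_y$, every vector in $V_{y+\Omega}=\Ind_{G_{y+\Omega}}^{G_y}\chi_0$ is $G_{y,n}$-fixed provided (i) $G_{y,n}\subseteq G_{y+\Omega}$ and (ii) the character $\chi_0$ (extended as in Lemma~\ref{L:omega}) is trivial on $G_{y,n}$. I would deduce (i) directly from Proposition~\ref{P:gxromega}: the hypothesis $y+\Omega\subseteq \Omega_y(\A,n)$ gives $G_{\Omega_y(\A,n)}\subseteq G_{y+\Omega}$, and the proposition places $S_0G_{y,n}\supseteq G_{y,n}$ inside this stabilizer. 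For (ii), recall from Lemma~\ref{L:omega} that $\chi_0$ is trivial on $U_{y+\Omega}$, and it is trivial on $S_n\subseteq S_1$ because $\chi$ has depth zero; it thus suffices to verify that each generator $\GG_\alpha(\PP^{\lrc{n-\alpha(y)}})$ of $G_{y,n}$ lies in $U_{y+\Omega}$, which reduces to the inequality $f_{y+\Omega}(\alpha)\leq \lrc{n-\alpha(y)}$ for every $\alpha\in\Phi$. This is a direct consequence of the defining bound $|\alpha(y)-\alpha(z)|\leq n$ on $y+\Omega$.

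The depth statement follows from the observation that $y$ is hyperspecial, so $\alpha(y)\in\mathbb{Z}$ for every $\alpha\in\Phi$; hence $G_{y,r}$ depends only on $\lrc{r}$ for $r>0$, and in particular $G_{y,(n-1)+}=G_{y,n}$. By the first part any irreducible subrepresentation of $V_{y+\Omega}$ admits a nonzero $G_{y,(n-1)+}$-fixed vector, and so has depth at most $n-1<n$.

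For the dimension formula, from \eqref{E:principalseries} I have $V^{G_{y,n}}\cong(\Ind_{B\cap G_y}^{G_y}\chi_0)^{G_{y,n}}$. The same reasoning as above, using that $\chi$ has depth zero and that $\chi_0$ is extended trivially across the unipotent part of $B\cap G_y$, shows that $\chi_0$ is trivial on $(B\cap G_y)\cap G_{y,n}$; combined with the normality of $G_{y,n}$ in $G_y$, a standard counting argument then yields
\[\dim V^{G_{y,n}} = [G_y:(B\cap G_y)G_{y,n}].\]
Since $y$ is hyperspecial I may identify $G_y\cong\GG(\R)$, $B\cap G_y\cong\BB(\R)$, and $G_{y,n}=\ker(\GG(\R)\to\GG(\R/\PP^n))$, so this index equals $|\GG(\R/\PP^n)|/|\BB(\R/\PP^n)|$. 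Smoothness of $\GG$ and $\BB$ gives $|\GG(\R/\PP^n)|=|\G_y|\,q^{(n-1)\dim\GG}$ and $|\BB(\R/\PP^n)|=|\B|\,q^{(n-1)\dim\BB}$; together with $\dim\GG-\dim\BB=|\Phi^+|$ this produces the stated formula. The main obstacle, I expect, will be this last step: carefully justifying the passage from the $p$-adic induced representation to a finite-group index and the identification of the relevant mod-$\PP^n$ group orders, so that no stray factor appears.
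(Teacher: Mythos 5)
Your argument follows the paper's proof closely: you invoke Proposition~\ref{P:gxromega} to place $G_{y,n}$ inside $G_{y+\Omega}$, check that $\chi_0$ is trivial on $G_{y,n}\subseteq S_1U_{y+\Omega}$, identify $V^{G_{y,n}}$ with $\Ind_{(B\cap G_y)G_{y,n}}^{G_y}\chi_0$, and use that $y$ is special (so $G_{y,(n-1)+}=G_{y,n}$) to get the depth bound. Your explicit index computation via smoothness and $\R/\PP^n$-points fills in a step the paper leaves implicit and is correct; an alternative staying within the Bruhat--Tits toolkit already at hand is to factor $[G_y:(B\cap G_y)G_{y,n}]=[G_y:G_\Gamma]\cdot[G_\Gamma:(B\cap G_y)G_{y,n}]$ with $G_\Gamma=(B\cap G_y)G_{y,+}$ the Iwahori subgroup, giving $|\G_y/\B|$ for the first factor and, by the product decomposition of Proposition~\ref{P:stabilizer} applied to the negative root groups, $q^{(n-1)|\Phi^+|}$ for the second.
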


Note that it follows that the GK-dimension of $V$ is $\vert \Phi^+\vert$, being half the dimension of the principal nilpotent orbit, as expected.

\begin{proof}
We may restrict to integral $n$ since $y$ is special.  If $y+\Omega \subseteq \Omega_y(\A,n)$ then $G_{y,n} \subseteq G_{y+\Omega}$ by Proposition~\ref{P:gxromega}.  Since $n>0$ we further have $G_{y,n} \subseteq S_1U_{y+\Omega} = \ker(\chi_0)$, so 
it acts trivially on the induced representation, yielding $V_{y+\Omega}^{G_{y,n}} = V_{y+\Omega}$.  In fact this argument defines an isomorphism $V^{G_{y,n}} \cong \Ind_{(B\cap G_y)G_{y,n}}^{G_y}\chi_0$, whence the dimension formula. 
\end{proof}

\begin{remark}
Let $r \in \real_{> 0}$.  If $\Phi$ does not contain an irreducible component of type $G_2$ then from Proposition~\ref{P:gxromega} we may deduce that $(B\cap G_y)G_{y,r}=G_{\Omega_r}$ where $\Omega_r = \overline{D}\cap \Omega_y(\A,r)$. Consequently $V^{G_{y,n}}=V_{\Omega_n}$ in this case.  In general, however, the partially ordered filtration of subrepresentations $V_{y+\Omega}$  does not necessarily include the subrepresentations $V^{G_{y,r}}$ of $G_{y,r}$-fixed vectors.  Although not needed here, note that one can obtain a much finer filtration (which in particular includes the $V^{G_{y,r}}$) by replacing the subgroups $G_{y+\Omega}$ with groups $G_f$ where $f$ is a concave function \cite[\S 6.4]{BruhatTits1972} satisfying $f(\alpha)=-\alpha(y)$ and $f(-\alpha)> \alpha(y)$ for all $\alpha\in \Phi^+$, as in \cite{CampbellNevins2009}. 
\end{remark}

\subsection{Calculations on intertwining}
Now let $\pi = \cind_{G_x}^G \tau$ be a depth-zero supercuspidal representation of $G$.  Let $\ell \in X_{x,y}^+$ and denote as before the corresponding Mackey component  by $\pi_{\ell} = \Ind_{G_{[y,x+\ell]}}^{G_y}\lconj{t(\ell)}{\tau}$.

Then for each set $\Omega$ as in Lemma~\ref{L:omega}, we have
\begin{align} \label{E:chain}
\Hom_{G_y}(\pi_{\ell}, \Ind_{G_{y+\Omega}}^{G_y} \chi_0) &\cong 
\Hom_{G_{[y,x+\ell]}}(\lconj{t(\ell)}{\tau}, \Res_{G_{[y,x+\ell]}}\Ind_{G_{y+\Omega}}^{G_{y}} \chi_0)\\
\notag&\cong \Hom_{G_{[y,x+\ell]}}(\lconj{t(\ell)}{\tau}, \oplus_{c \in \Psi_{x,y,\Omega}} 
\Ind_{G_{[y,x+\ell]} \cap \lconj{c}{G_{y+\Omega}}}^{G_{[y,x+\ell]}} \lconj{c}{\chi_0}) \\
\notag&\cong \oplus_{c \in \Psi_{x,y,\Omega}} \Hom_{G_{[y,x+\ell]} \cap \lconj{c}{G_{y+\Omega}}}(\lconj{t(\ell)}{\tau}, \lconj{c}{\chi_0})\\
\notag&\cong \oplus_{c \in \Psi_{x,y,\Omega}} \Hom_{G_{[x,y-\ell]}\cap G_{t(-\ell)c\cdot (y+\Omega)}}(\tau, \lconj{t(-\ell)c}{\chi_0})
\end{align}
where $\Psi_{x,y,\Omega} = G_{[y,x+\ell]}\backslash G_y / G_{y+\Omega}$.  

Determining a set of representatives for $\Psi_{x,y,\Omega}$ is a large subset of the problem of classifying $B\cap G_y$ double cosets in $G_y$, which for some groups is known to contain the matrix pair problem, that is, be wild \cite{OnnPrasadVaserstein2006}.  Furthermore, while $t(-\ell)c\cdot (y+\Omega)$ will be a convex closed subset of an apartment $\A'$, 
meeting $\A$ in at least the point $y-\ell$, it is not to be expected that there exists a choice of such $\A'$ which also contains $x$.  Thus in general the convex closure of $[x,y-\ell] \cup t(-\ell)c\cdot (y+\Omega)$ is not contained in any apartment of $\B$, and therefore its stabilizer is much more difficult to describe.

Nevertheless, there remain some tractable cases to consider, which suffice for proving the following theorem.
Let $Z_x \subseteq G_x$ denote the full preimage of $Z(\G_x) \subseteq \G_x$.  

\begin{theorem} \label{T:main}
Let $\tau$ be a Deligne-Lusztig cuspidal representation of $\G_x$ with central character $\theta$.  Let $\widehat{\theta}$ denote the inflation of $\theta$ to $Z_x$.
Let $\chi$ be a character of $S$ such that for some $w\in W_0$, $\Res_{Z_x}\lconj{w}{\chi} = \widehat{\theta}$.
Then the restrictions to $G_y$ of
$$
\pi^s=\cind_{G_x}^{G}\tau \quad \textrm{and} \quad \pi^p=\Ind_B^G \chi
$$
have infinitely many distinct irreducible representations in common, of arbitrarily large depth.
\end{theorem}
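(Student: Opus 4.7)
The strategy is to exhibit, for each $\ell$ in an infinite subfamily of $X_{x,y}^+$ along which $r_0(\ell) := \max\{\beta(x-y+\ell)\mid\beta\in \Delta_x\}$ grows without bound, at least one irreducible $G_y$-subrepresentation common to the Mackey component $\pi_\ell = \Ind_{G_{[y,x+\ell]}}^{G_y}\lconj{t(\ell)}{\tau}$ of $\Res_{G_y}\pi^s$ and to some subrepresentation $V_{y+\Omega}$ of $\Res_{G_y}\pi^p$ from Section~\ref{SS:filtration}.  Combined with Theorem~\ref{T:depth} (which gives $r_0(\ell)$ as a lower bound on the depth of every irreducible subrepresentation of $\pi_\ell$) and Corollary~\ref{C:disjoint} (disjointness of the Mackey components $\pi_{\ell_1}, \pi_{\ell_2}$ for sufficiently separated $\ell_i$), this yields infinitely many distinct shared irreducible constituents of arbitrarily large depth.

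The central computation is to establish nonzero intertwining between $\pi_\ell$ and $V_{y+\Omega}$ for a well-chosen $\Omega$.  From the chain \eqref{E:chain}, I would pick a lift $n_w \in N \cap G_y$ of the Weyl element $w$ supplied by the hypothesis (available because $y$ is special, so that $W_y^{lin}=W_0$) and use $c = n_w$ as the double coset representative.  Using $n_w\cdot y = y$, the corresponding summand of \eqref{E:chain} reads
\begin{equation*}
\Hom_{G_{[x,y-\ell]}\cap G_{y-\ell + w\Omega}}\bigl(\tau,\; \lconj{t(-\ell)n_w}{\chi_0}\bigr).
\end{equation*}
I would then take $\Omega$ to be any convex closed subset of $\overline{D}$ containing both $\overline{C}$ and the vector $w^{-1}(x-y+\ell)$, restricting $\ell$ to the infinite sub-cone of $X_{x,y}^+$ for which that vector lies in $\overline{D}$.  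Such a choice forces $x\in y-\ell+w\Omega$, so the intersection sits inside $G_x$, and by applying Proposition~\ref{P:stabilizer} to both $G_{[x,y-\ell]}$ and $G_{y-\ell+w\Omega}$ the intent is to arrange that every root subgroup contribution to the intersection is absorbed into $G_{x,+}$, so that its image in $\G_x = G_x/G_{x,+}$ reduces to the split torus $\Sres$ alone.

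With the intersection collapsed to $\Sres$, the remaining Hom reduces to $\Hom_{\Sres}(\tau|_{\Sres}, \lconj{w}{\chi}|_{\Sres})$.  The Deligne-Lusztig character formula \eqref{E:DeligneLusztigrestrict} specialised to $u = 1$ implies that $\tau|_{\Sres}$ vanishes outside the image of $Z_x$ and equals $\deg(\tau)\,\theta$ on $Z(\G_x)$; meanwhile, the hypothesis $\Res_{Z_x}\lconj{w}{\chi} = \widehat{\theta}$ gives $\lconj{w}{\chi}|_{Z(\G_x)} = \theta$.  Therefore $\lconj{w}{\chi}|_{\Sres}$ occurs in $\tau|_{\Sres}$ with positive multiplicity $\deg(\tau)\,|Z(\G_x)|/|\Sres|$, so the Hom is nonzero and $\pi_\ell$ shares an irreducible $G_y$-component with $V_{y+\Omega}\subseteq \Res_{G_y}\pi^p$.

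The principal obstacle is the geometric engineering of $\Omega$ that collapses $G_{[x,y-\ell]}\cap G_{y-\ell+w\Omega}$ to $\Sres$ modulo $G_{x,+}$: for each affine root $\alpha$ one must verify, using the explicit product decomposition of Proposition~\ref{P:stabilizer}, that the corresponding root subgroup contribution to both stabilizers lies in $G_{x,+}$, with particular care for non-simply-laced components (as signalled by Proposition~\ref{P:gxromega}).  For Weyl elements $w$ whose chamber $w\overline{D}$ meets $\overline{D_x}$ in too small a face, one may need to replace $n_w$ by its composition with a suitable translation in $X_\ast(S)$, chosen so as to preserve the needed restriction to $Z_x$.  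Once this construction is in place, iterating Corollary~\ref{C:disjoint} with the bound $s_0(\ell)\leq h\cdot r_0(\ell)$ (where $h$ is the height of the highest root of $\Phi$, valid when $x$ is special; an analogous bound is available in the non-special case) extracts a sparse subsequence of $\ell$ for which the resulting common irreducible components are pairwise distinct and of unbounded depth.
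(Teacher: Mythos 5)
The core computation you propose is the same as the paper's (Proposition~\ref{P:inttwine}): pick the double coset representative $c$ to be a lift of $w_y = t(y-wy)w \in W_y$, observe that the relevant intersection becomes $G_{\Omega'}$ for $\Omega' = y-\ell+w\Omega$, use Corollary~\ref{C:interior} to kill the root-group contributions modulo $G_{x,+}$, and reduce to $\Hom_{S_0}(\tau, \lconj{w}{\chi})$, which is nonzero by the Deligne--Lusztig character formula and the compatibility hypothesis. Your depth and disjointness bookkeeping via Theorem~\ref{T:depth} and Corollary~\ref{C:disjoint} also agrees with the paper's final corollary.

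There is, however, a genuine gap in how you obtain an infinite supply of $\ell$. You restrict $\ell$ to the set where $w^{-1}(x-y+\ell) \in \overline{D}$, i.e.\ $x-y+\ell \in w\overline{D} \cap \overline{D_x}$, for the \emph{given} Weyl element $w$ from the hypothesis. But the hypothesis places no constraint forcing $w$ to lie in $\Upsilon_x = \{w \in W_0 \mid wD \subseteq D_x\}$, and when $w \notin \Upsilon_x$ the cone $w\overline{D} \cap \overline{D_x}$ is a proper (possibly zero-dimensional) face, lying on the boundary rather than in $D_x$. Concretely, if $w$ is the long element $w_0$ then $w\overline{D} = -\overline{D}$ and $-\overline{D} \cap D_x = \emptyset$, so your sub-cone contains no elements of $int(X_{x,y}^+)$ at all. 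Your proposed repair — composing $n_w$ with a translation $t(v)$ — cannot help: translation leaves the linear part $w$ unchanged, so it cannot rotate $w\overline{D}$ into $D_x$. The paper circumvents this (Corollary~\ref{C:subreps}) by a different mechanism: for each $\ell \in int(X_{x,y}^+)$, let $w_0 \in \Upsilon_x$ be the element with $x-y+\ell \in w_0 D$, and replace $\chi$ by $\lconj{w_0^{-1}w}{\chi}$. This twisted character satisfies the compatibility condition for $w_0 \in \Upsilon_x$ since $\Res_{Z_x}\lconj{w_0}{(\lconj{w_0^{-1}w}{\chi})} = \Res_{Z_x}\lconj{w}{\chi} = \widehat{\theta}$, and crucially $\Ind_B^G(\lconj{w_0^{-1}w}{\chi}) \cong \Ind_B^G\chi$ as $G$-representations, so the restrictions to $G_y$ coincide. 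Without this $W_0$-twist step your argument only covers the $\ell$ in a sub-cone that can be empty, and the theorem's claim of infinitely many shared components of unbounded depth does not follow.
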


Note that when $x$ is special, $Z_x=ZS_1$ so the compatibility condition of the theorem is equivalent to simply requiring the representations to have the same central character.

\subsection{Proof of Theorem~\ref{T:main}}

We begin by proving that each Mackey component of a Deligne-Lusztig supercuspidal representation corresponding to an element of $int(X_{x,y}^+)$ intertwines with any compatible principal series representation.

\begin{proposition} \label{P:inttwine}
Let $\tau, \theta$ and $\widehat{\theta}$ be as above.
Let $\ell \in int(X_{x,y}^+)$ and define $w \in \Upsilon_x$ by $\ell+x-y \in wD$.  
Let $\chi$ be a character of $S$ such that $\Res_{Z_x}\lconj{w}{\chi} = \widehat{\theta}$ and denote by $\chi_0$ the trivial extension of $\chi$ to any subgroup of $G_{y+C}$.
Then the representations
$$
 \Ind_{G_{[y,x+\ell]}}^{G_{y}} \lconj{t(\ell)}{\tau} \quad \textrm{and} \quad 
\Ind_{G_{y+\Omega}}^{G_{y}} \chi_0
$$
intertwine, for all bounded convex closed subsets $\Omega$ with $\overline{C} \subseteq \Omega \subset \overline{D}$ for which $x-y+\ell \in w\Omega$.  
\end{proposition}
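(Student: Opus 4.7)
The plan is to apply the chain of isomorphisms \eqref{E:chain} and produce a single double-coset representative $c \in G_{[y,x+\ell]}\backslash G_y/G_{y+\Omega}$ whose corresponding summand
$$
\Hom_{G_{[x,y-\ell]}\cap G_{t(-\ell)c(y+\Omega)}}\bigl(\tau,\, \lconj{t(-\ell)c}{\chi_0}\bigr)
$$
is nonzero. My guiding principle is to force $t(-\ell)c \cdot (y+\Omega)$ to contain $x$, so that the relevant intersection of subgroups lies inside $G_x$ and the entire Hom reduces to a finite-group calculation in $\G_x$, amenable to the Deligne-Lusztig character formula.

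Since $y$ is special, the isomorphism \eqref{E:wlin} produces a lift $\tilde w \in W_y$ of $w \in W_0$, and I will take $c \in N \cap G_y$ to be a representative of $\tilde w$.  As an affine transformation $c$ fixes $y$ and acts linearly there as $w$, so a short computation gives $t(-\ell)c(y+\Omega) = y - \ell + w\Omega$.  The hypothesis $x - y + \ell \in w\Omega$ is exactly the assertion $x \in y - \ell + w\Omega$, and hence $G_{y-\ell+w\Omega} \subseteq G_x$; setting $H := G_{[x, y-\ell]} \cap G_{y-\ell+w\Omega} \subseteq G_x$, I need $\Hom_H(\tau, \lconj{t(-\ell)c}{\chi_0}) \neq 0$.

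By Lemma~\ref{L:alcove} the convex closure of $[x, y-\ell]$ contains a unique alcove $\Gamma$ adjacent to $x$.  Because the direction from $y-\ell$ along the geodesic lies in the cone $wD$ and $\Omega \supseteq \overline{C}$, a short check shows that $\Gamma$ is also contained in $y - \ell + w\Omega$.  Applying the reasoning of Lemma~\ref{L:resp} on both sides, I identify $\bar H := HG_{x,+}/G_{x,+}$ as a specific subgroup of the Borel $\B_x$ of $\G_x$ determined by $\Gamma$.  The depth-zero hypothesis on $\chi$ ensures that $\chi_0$ is trivial on $G_{y+\Omega,+}$, and this triviality is preserved under conjugation by $t(-\ell)c$, so both $\tau\vert_H$ and $(\lconj{t(-\ell)c}{\chi_0})\vert_H$ are inflated from representations on $\bar H$.

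It then remains to check nonvanishing of the Hom space over $\bar H$.  Using \eqref{E:DeligneLusztigrestrict}, the restriction of $\tau$ to $\bar H$ vanishes off $(Z(\G_x) \cdot \U_x) \cap \bar H$ and there takes the value $(su) \mapsto \theta(s) Q_\T^{\G_x}(u)$, while $(\lconj{t(-\ell)c}{\chi_0})\vert_{\bar H}$ restricts on the semisimple part to $\lconj{w}{\chi}\vert_{\Sres}$, which agrees with $\theta$ on $Z(\G_x)$ by hypothesis.  The hard part will be showing that the resulting sum $\sum_{u \in \U_x \cap \bar H} Q_\T^{\G_x}(u)$ is nonzero: over the \emph{full} Borel this sum vanishes, reflecting the cuspidality of $\tau$ (cf.\ Remark~\ref{R:self}), so the nonvanishing can only come from the fact that $\bar H$ is a \emph{proper} subgroup of $\B_x$, cut out by the direction $w$ and the set $\Omega$.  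A precise analysis of which root subgroups of $\B_x$ survive in $\bar H$, combined with standard properties of Green functions, is needed to conclude.
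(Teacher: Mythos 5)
Your choice of double-coset representative $c$ (a lift of the $W_y$-element covering $w\in W_0$) and the computation $t(-\ell)c\cdot(y+\Omega) = y-\ell+w\Omega =: \Omega'$ are exactly the paper's; so is the reduction to a Hom-space inside $\G_x$.  But you stop at what you call ``the hard part'' — showing $\sum_{u\in\U_x\cap\bar H} Q_\T^{\G_x}(u)\neq 0$ — and this is precisely where the proof actually closes, quickly, via two observations you have not made.

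First, since both $y-\ell$ (because $0\in\overline C\subseteq\Omega$) and $x$ (by the hypothesis $x-y+\ell\in w\Omega$) lie in the convex set $\Omega'$, the geodesic $[x,y-\ell]$ is contained in $\Omega'$, hence $H = G_{[x,y-\ell]}\cap G_{\Omega'} = G_{\Omega'}$; there is no genuine intersection to untangle.  Second, and decisively, $x$ lies in the \emph{interior} of $\Omega'$, so Corollary~\ref{C:interior} gives $U_{\Omega'}\subseteq G_{x,+}$.  Since $G_{x,+}\subseteq\ker\tau$, and since conjugating $\chi_0$ back by $w_y^{-1}t(\ell)$ sends $U_{\Omega'}$ to $U_{y+\Omega}\subseteq\ker\chi_0$, the entire unipotent part $U_{\Omega'}$ acts trivially on both sides.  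The Hom-space therefore reduces to $\Hom_{S_0}(\tau,\lconj{w}{\chi})$, i.e.\ your $\bar H$ is just $\Sres$, not a proper Borel-with-unipotent-radical.  The Green function is then only ever evaluated at $u=1$, where $Q_\T^{\G_x}(1)=\deg(\tau)\neq 0$, and the remaining sum over $Z(\G_x)$ is nonzero exactly under the hypothesis $\Res_{Z_x}\lconj{w}{\chi}=\widehat\theta$.  Your remark that the full-Borel sum vanishes by cuspidality is a correct warning sign, but the resolution is not a delicate Green-function estimate; it is the structural fact that the unipotent radical has already been absorbed into $G_{x,+}$, so there is no sum to estimate.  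Without Corollary~\ref{C:interior} (or an equivalent observation), your proposal does not constitute a proof.
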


\begin{proof}
Note that as $S_1 = \lconj{w^{-1}}{S_1} \subseteq Z_x$, the hypotheses imply that $\chi$ has depth zero.  It therefore suffices to show that there exists a nonzero summand in \eqref{E:chain}.  

The existence and uniqueness of $w \in \Upsilon_x \subseteq W_0$ follows from Lemma~\ref{L:upsilon}.  By \eqref{E:wlin}, $w_y := t(y-wy)w \in W_y$, which we lift to an element of $G_y$. 
Set $\Omega' = t(-\ell)w_y\cdot (y+\Omega) = y-\ell+w\Omega$.  When $x-y+\ell \in w\Omega$ both $x$ and $y-\ell$ lie in $\Omega'$, so $G_{[x,y-\ell]}\cap G_{\Omega'} = G_{\Omega'}$.  Defining $U_{\Omega'}$ as in Proposition~\ref{P:stabilizer}, we deduce that the summand corresponding to $c=w_y$ in \eqref{E:chain} is
\begin{equation}\label{E:hom2}
\Hom_{S_0U_{\Omega'}} (\tau, \lconj{t(-\ell)w_y}{\chi_0}).
\end{equation}
By hypothesis we have $x \in int(\Omega')$ so by Corollary~\ref{C:interior}, $U_{\Omega'} \subseteq G_{x,+}\subseteq \ker(\tau)$.  On the other hand, note that
$$
\lconj{w_y^{-1}t(\ell)}{(S_0U_{\Omega'})} = S_0U_{w_y^{-1}t(\ell)\cdot \Omega'} = S_0U_{y + \Omega} 
$$
and that $\chi_0$ was defined to be trivial on $U_{y+\Omega}$.  Therefore $\lconj{t(-\ell)w_y}{\chi_0}$ is trivial on $U_{\Omega'}$.  Moreover, on $S_0$ the character $\lconj{t(-\ell)w_y}{\chi_0}$ coincides with $\lconj{w}{\chi}$.  Thus \eqref{E:hom2} is isomorphic to
$$
\Hom_{S_0} (\tau, \lconj{w}{\chi}).
$$
Using the character formula from \eqref{E:DeligneLusztigrestrict},  the intertwining of the character $\varepsilon R_\T^{\G_x}(\theta)$ of $\tau$ with $\lconj{w}{\chi}$ is given on $S_0$ by
\begin{align*}
\langle \varepsilon R_\T^{\G_x}(\theta),\lconj{w}{\chi} \rangle_{S_0}  
&= \frac{1}{\vert S_0 \vert} \int_{S_0} \varepsilon R_\T^{\G_x}(\theta)(s)\overline{\lconj{w}{\chi}(s)}\; ds \\
&= \frac{1}{\vert S_0 \vert} \int_{Z(\G_x)}\int_{S_1} \deg(\tau)\theta(z)\overline{\lconj{w}{\chi}(zs_1)}\; dzds_1 \\
&= \begin{cases}
0 & \textrm{if $\Res_{Z_x}\lconj{w}{\chi} \neq \widehat{\theta}$}\\
\deg(\tau)\frac{\vert Z(\G_x)\vert}{\vert \Sres\vert} & \textrm{otherwise}.
\end{cases}
\end{align*}
Consequently $\Hom_{S_0} (\tau, \lconj{w}{\chi}) \neq \{0\}$ exactly when the restriction of $\lconj{w}{\chi}$ to $Z_x$ coincides with $\widehat{\theta}$.  The proposition follows.
\end{proof}

We now do away with the apparent dependence on $w$ in Proposition~\ref{P:inttwine}.

\begin{corollary} \label{C:subreps}
Let $\Ind_B^G \chi$ be a depth-zero principal series representation.  Suppose $\tau$ is a Deligne-Lusztig cuspidal representation of $\G_x$ with central character $\theta$ with inflation $\widehat{\theta}$ to $Z_x$.  Let $w\in W_0$ and suppose $\Res_{Z_x}\lconj{w}{\chi} = \widehat{\theta}$.
Then for every $\ell \in int(X_{x,y}^+)$, there exists a subrepresentation of the Mackey component $\pi_\ell$ of $\Res_{G_y} \cind_{G_x}^G\tau$ which is isomorphic to a subrepresentation of $\Res_{G_y}\Ind_B^G \chi$.
\end{corollary}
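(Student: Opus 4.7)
The plan is to reduce Corollary~\ref{C:subreps} to Proposition~\ref{P:inttwine} by replacing the given character $\chi$ with a suitable $W_0$-conjugate $\chi'$ for which the specific element of $\Upsilon_x$ associated with $\ell$ plays the role of $w$ in Proposition~\ref{P:inttwine}, and then using a Weyl-invariance property of depth-zero principal series to transport the conclusion back from $\chi'$ to the original $\chi$.

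Fix $\ell \in int(X_{x,y}^+)$.  By Lemma~\ref{L:upsilon} there is a unique $w_\ell \in \Upsilon_x$ with $x-y+\ell \in w_\ell D$, so in particular $w_\ell^{-1}(x-y+\ell) \in D \subseteq \overline{D}$.  Set $\chi' := \lconj{w_\ell^{-1}w}{\chi}$, so that $\lconj{w_\ell}{\chi'} = \lconj{w}{\chi}$ and therefore $\Res_{Z_x}\lconj{w_\ell}{\chi'} = \widehat{\theta}$ by hypothesis.  Choose any bounded convex closed $\Omega$ with $\overline{C} \subseteq \Omega \subseteq \overline{D}$ and $w_\ell^{-1}(x-y+\ell) \in \Omega$, for example the convex hull of $\overline{C} \cup \{w_\ell^{-1}(x-y+\ell)\}$, which lies in $\overline{D}$ by convexity.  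Applying Proposition~\ref{P:inttwine} to the triple $(\chi',\, w_\ell,\, \Omega)$ yields a nonzero element of $\Hom_{G_y}(\pi_\ell, \Ind_{G_{y+\Omega}}^{G_y}\chi'_0)$, and by Lemma~\ref{L:omega} the target embeds as a subrepresentation of $\Ind_{B\cap G_y}^{G_y}\chi'_0 \cong \Res_{G_y}\Ind_B^G\chi'$.  Since $\pi_\ell$ is a finite-dimensional smooth representation of the compact group $G_y$ and is therefore completely reducible, this nonzero intertwining produces an irreducible $G_y$-subrepresentation $V$ common to $\pi_\ell$ and $\Res_{G_y}\Ind_B^G\chi'$.

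It remains to transfer $V$ from $\Res_{G_y}\Ind_B^G\chi'$ to $\Res_{G_y}\Ind_B^G\chi$.  By the depth-zero hypothesis on $\chi$ and equation~\eqref{E:principalseries}, both restrictions factor as inflations to $G_y$ of the Harish-Chandra induced representations $\Ind_{\B_y}^{\G_y}\overline{\chi}$ and $\Ind_{\B_y}^{\G_y}\overline{\chi'}$ of the split finite reductive group $\G_y$ from $W_0$-conjugate characters of $\Sres$.  The classical fact that Harish-Chandra induction from the standard Borel of a split finite reductive group depends only on the Weyl orbit of the inducing character (the resulting modules have equal characters, hence are isomorphic over $\mathbb{C}$) gives $\Res_{G_y}\Ind_B^G\chi \cong \Res_{G_y}\Ind_B^G\chi'$, so $V$ is a subrepresentation of both sides and the corollary follows.

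The main obstacle is precisely this last step.  The combinatorial adjustment producing $\chi'$ and the appeal to Proposition~\ref{P:inttwine} and Lemma~\ref{L:omega} are all straightforward, but the Weyl-invariance argument requires that the restriction to $G_y$ truly descend to the finite quotient $\G_y$ --- this is why the depth-zero hypothesis on $\chi$ is essential --- after which one invokes standard finite Harish-Chandra theory to decouple the conclusion from the specific choice of $w$ in the hypothesis of the corollary.
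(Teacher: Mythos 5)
Your first five steps track the paper's proof faithfully: you use Lemma~\ref{L:upsilon} to find $w_\ell\in\Upsilon_x$ (the paper's $w_0$), set $\chi':=\lconj{w_\ell^{-1}w}{\chi}$ so that the compatibility hypothesis of Proposition~\ref{P:inttwine} holds with $w_\ell$ in place of $w$, choose $\Omega$ accordingly, apply Proposition~\ref{P:inttwine} together with Lemma~\ref{L:omega}, and extract a common irreducible $G_y$-subrepresentation by complete reducibility. This is essentially identical to the paper's argument.

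The gap is in the final step, where you assert that $\Res_{G_y}\Ind_B^G\chi$ and $\Res_{G_y}\Ind_B^G\chi'$ ``factor as inflations to $G_y$ of the Harish-Chandra induced representations $\Ind_{\B_y}^{\G_y}\overline{\chi}$ and $\Ind_{\B_y}^{\G_y}\overline{\chi'}$.''  That claim is false: by Iwasawa decomposition $\Res_{G_y}\Ind_B^G\chi \cong \Ind_{B\cap G_y}^{G_y}\chi_0$, which is an infinite-dimensional representation of $G_y$ containing subrepresentations of arbitrarily large depth (see Proposition~\ref{P:depthps}); it therefore cannot be the inflation of any representation of the finite quotient $\G_y=G_y/G_{y,+}$. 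Only the $G_{y,+}$-invariant part $V^{G_{y,+}}$ descends to $\G_y$. Since the subrepresentation $V$ you constructed in the earlier steps is generally of positive depth (this is, after all, the whole point of Theorem~\ref{T:main}), your finite Harish-Chandra argument cannot reach it, so the transfer from $\chi'$ back to $\chi$ is not established. The paper instead invokes the fact that $\Ind_B^G(\lconj{w_0^{-1}w}{\chi})\cong\Ind_B^G\chi$ \emph{as representations of $G$} whenever $w_0^{-1}w\in W_0$, from which the isomorphism of the two restrictions to $G_y$ follows immediately; this is the step your proof should use in place of the finite-quotient reduction.
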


\begin{proof}
For any $\ell \in int(X_{x,y}^+)$, we define $w_0 \in \Upsilon_x$ as in Proposition~\ref{P:inttwine}.  Thus $x-y+\ell \in w_0D$.  Choose a bounded closed convex set $\Omega$ satisfying 
$C \cup \{w_0^{-1}(x-y+\ell)\} \subset \Omega \subset \overline{D}$.
 Since $\Res_{Z_x}(\lconj{w_0}{(\lconj{w_0^{-1}w}{\chi})}) = \Res_{Z_x}\lconj{w}{\chi} = \widehat{\theta}$,  Proposition~\ref{P:inttwine} implies that
 $\pi_\ell$ intertwines with the subrepresentation of $\Res_{G_y}\Ind_B^G (\lconj{w_0^{-1}w}{\chi})$ induced from $G_{y+\Omega}$.   Consequently $\Res_{G_y}\Ind_B^G (\lconj{w_0^{-1}w}{\chi})$ contains a subrepresentation of $G_y$ which is isomorphic to a subrepresentation of $\pi_\ell$.  Finally, since $w_0^{-1}w \in W_0$,  $\Ind_B^G (\lconj{w_0^{-1}w}{\chi}) \cong \Ind_B^G \chi$ as representations of $G$ and therefore their restrictions to $G_y$ must also be isomorphic.  
\end{proof}

Although the subrepresentations arising in Corollary~\ref{C:subreps} are not necessarily distinct, we have the following result. 

\begin{corollary}
Let $\pi^s$ be a Deligne-Lusztig supercuspidal representation and $\pi^p$ a depth-zero principal series representation, which are compatible in the sense of Corollary~\ref{C:subreps}.  Then  $\Res_{G_y}\pi^s$ and $\Res_{G_y}\pi^p$ have infinitely many distinct components in common, and the set of depths of these components is unbounded.
\end{corollary}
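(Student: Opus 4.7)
The plan is to iterate Corollary~\ref{C:subreps} along a ray in $int(X_{x,y}^+)$, and to use Corollary~\ref{C:disjoint} to force distinctness of the resulting common constituents and Theorem~\ref{T:depth} to force their depths to go to infinity. The key observation is that Corollary~\ref{C:subreps} already furnishes, for every $\ell \in int(X_{x,y}^+)$, a nonzero subrepresentation of the Mackey component $\pi_\ell$ which embeds in $\Res_{G_y}\pi^p$; all that remains is to pick a sequence of $\ell$'s that spreads these subrepresentations across disjoint Mackey components of $\Res_{G_y}\pi^s$ and pushes their depths to infinity.

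Concretely, I would first fix a lattice direction $v \in X_\ast(S)$ lying in the open cone $D_x$; such a $v$ exists because $D_x$ is a nonempty open convex cone in $E$ and $X_\ast(S)$ is a lattice spanning $E$. Fixing any $\ell_0 \in int(X_{x,y}^+)$, set $\ell_n := \ell_0 + nv$ for $n \geq 0$; each $\ell_n$ lies in $int(X_{x,y}^+)$ since $\beta(v)>0$ for every $\beta \in \Phi_x^{lin,+}$. Writing
$$
r_0(\ell_n) = \max_{\beta \in \Delta_x}\beta(x-y+\ell_n), \qquad s_0'(\ell_n) = \max_{\alpha \in \Phi}\alpha(x-y+\ell_n),
$$
both are affine functions of $n$; the slope of $r_0(\ell_n)$ is strictly positive because $\beta(v)>0$ for each $\beta \in \Delta_x$, whereas $s_0'(\ell_n)$ grows at a bounded linear rate. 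Hence I can extract a subsequence $n_1 < n_2 < \cdots$ satisfying $r_0(\ell_{n_{k+1}}) > s_0'(\ell_{n_k})$ for every $k$. This is exactly the disjointness hypothesis of Corollary~\ref{C:disjoint} applied with $x_1=x_2=x$ and $\tau_1=\tau_2=\tau$, so the Mackey components $\pi_{\ell_{n_k}}$ appearing in the decomposition \eqref{E:decomp2} of $\Res_{G_y}\pi^s$ are pairwise disjoint representations of $G_y$.

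Corollary~\ref{C:subreps} then produces, for each $k$, a nonzero subrepresentation of $\pi_{\ell_{n_k}}$ which embeds in $\Res_{G_y}\pi^p$; since $G_y$ is compact I may select an irreducible constituent $\sigma_k$ of this subrepresentation. The $\sigma_k$ are pairwise nonisomorphic because they sit inside mutually disjoint Mackey components of $\Res_{G_y}\pi^s$, and Theorem~\ref{T:depth} bounds $\mathrm{depth}(\sigma_k)$ below by $r_0(\ell_{n_k})$, which tends to infinity. This yields infinitely many distinct irreducible components common to $\Res_{G_y}\pi^s$ and $\Res_{G_y}\pi^p$, of unbounded depth. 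The only technical point requiring care is finding the lattice direction $v$ in $D_x$ and controlling the linear growth rates of $r_0$ and $s_0'$, but both are immediate consequences of the structure of the root system and of $X_\ast(S)$; no substantive obstacle remains once Corollaries~\ref{C:subreps} and~\ref{C:disjoint} are in hand.
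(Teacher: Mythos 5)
Your proof is correct and takes essentially the same route as the paper's explicit argument: both restrict to a sequence of $\ell$'s in $int(X_{x,y}^+)$ whose Mackey components are forced apart by Corollary~\ref{C:disjoint}, then invoke Corollary~\ref{C:subreps} for the common constituents and Theorem~\ref{T:depth} for the depth bound (the paper also notes a quicker Pigeonhole argument for infinitude using admissibility of $\pi^s$). One minor point: pairwise disjointness requires $s_0'(\ell_{n_j}) < r_0(\ell_{n_k})$ for all $j<k$, not merely consecutive indices, but this is readily arranged recursively since $r_0(\ell_n)\to\infty$; note also that $r_0$ and $s_0'$ are maxima of affine functions of $n$, hence only piecewise affine, which does not affect the argument.
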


\begin{proof}
The first part follows from Corollary~\ref{C:subreps} by the Pigeonhole Principle since there are infinitely many $\ell \in int(X_{x,y}^+)$ and the admissibility of each supercuspidal representation implies each $G_y$-subrepresentation occurs with finite multiplicity.  

More explicitly, we may restrict $\ell$ to an infinite subset of $X_{x,y}^+ \cap (y-x+D)$ in which every pair of elements satisfy the conditions of Corollary~\ref{C:disjoint} thereby ensuring that their components are distinct.  By Theorem~\ref{T:depth}, the set of depths of these representations is unbounded above.
\end{proof}

\begin{remark}
Given a depth-zero principal series representation, one may ask if for each vertex $x$ and minisotropic maximal torus $\T \subseteq \G_x$ there exists a Deligne-Lusztig cuspidal character $R_\T^{\G_x}(\theta)$ such that the corresponding supercuspidal representation is compatible with $\chi$.  This is equivalent to the question of the existence of a character $\theta$ of $\T$, coinciding with $\chi$ on $Z(\G_x)$, which is in general position, that is, not fixed by any nontrivial element of $W_x$.  For $q$ sufficiently large, this follows from the arguments in \cite[Lemma 8.4.2]{Carter1985} with minor modification.
\end{remark}

\section{An example}\label{S:example}

We now illustrate the use of the results of Sections~\ref{S:restriction} to \ref{S:intertwining} with an example.

Let $G=\SL(3,\ratk)$.  Suppose that $p\neq 3$ and $3 \nmid (q-1)$, whence we have simply  $\GL(3,\R) = Z(\GL(3,\R))\SL(3,\R)$ and the irreducible representations of $\GL(3,\R)$ and $\SL(3,\R)$ coincide up to scalars.  Moreover, all irreducible cuspidal representations of $\SL(3,\resk)$ arise as Deligne-Lusztig cuspidal representations.  The group $GL(3,\ratk)$ acts on $\B=\B(G,\ratk)=\B^{red}(\GL(3),\ratk)$, and this action is transitive on the set of vertices, which are all special.  Note that if $y\in \B$ and $h\in  \GL(3,\ratk)$ we also have $\lconj{h}{G_y} = G_{h\cdot y}$.  It follows that we may without loss of generality fix a single choice of vertex $y$.

Let $\A\subseteq \B$ denote the apartment corresponding to the diagonal split torus, set $\Delta = \{\alpha,\beta\}$, and let $y=0$ be the vertex at which these roots vanish; then $\G_y = \SL(3,\R)$.  The element  $g = \smat{0&0&1\\\p &0&0\\0&\p&0} \in \GL(3,\ratk)$ has the property that $x_0=y, x_1=g\cdot y$ and $x_2=g^2\cdot y$ are the three vertices of the fundamental positive alcove in $\A$, representing the three $\SL(3,\ratk)$-conjugacy classes of vertices in $\B$.  Concretely, we have $\alpha(x_1)=1$ and $\beta(x_1)=0$, so that $x_1 = \frac13(2\alpha^\vee+\beta^\vee)$; $x_2$ is characterized by swapping the roles of $\alpha$ and $\beta$.

For $i\in \{0,1,2\}$, the inflation to $G_{g^i\cdot y}$ of a cuspidal representation of $G_{g^i\cdot y}/G_{g^i\cdot y, +}$ is given by  $\lconj{g^i}{\tau}$, for $\tau$ the inflation of some cuspidal representation of $G_y/G_{y,+}$.
Since $Z(\SL(3,\resk)) = \{1\}$ and there is a unique anisotropic torus in $\SL(3,\resk)$, any two Deligne-Lusztig cuspidal representations of $\SL(3,\resk)$ are compatible, in the sense of Theorem~\ref{T:same}.
Furthermore, as noted in Example~\ref{Example:SL3tori}, the hypotheses of Corollary~\ref{C:allsame} hold for $G$.

Thus for the purposes of exploring all the positive-depth components of the restriction to $\SL(3,\R)$ of all depth-zero supercuspidal representations of $\SL(3,\ratk)$, it suffices to fix an anisotropic torus $\T$ of $\SL(3,\resk)$ and a 
Deligne-Lusztig cuspidal representation $\tau = R_{\T}^{\G}(\theta)$, and 
 consider for $i\in \{0,1,2\}$ the corresponding supercuspidal representations 
$$\pi^i= \cind_{G_{g^i\cdot y}}^G \lconj{g^i}{\tau}.$$
We will denote the various Mackey components of $\Res_{G_y}\pi^i$ by  $\pi^i_\ell$.

When $i=0$, $X_{x_0,y}^+ = X_+$.   The Mackey component corresponding to $\ell=0$ in $\pi^0$ is simply $\pi_0^0=\tau$, which has depth zero. Applying Theorem~\ref{T:depth}, we deduce that for $\ell \in X_+ \setminus\{0,\alpha^\vee+\beta^\vee\}$, the depth of any irreducible subrepresentation of $\pi^0_\ell$ is at least $2$.  Furthermore, when $\ell = \alpha^\vee+\beta^\vee$, the depths of the irreducible subrepresentations of $\pi^0_\ell$ are confined between $1$ and $2$.

When $i\in \{1,2\}$, deciphering the definition given in Proposition~\ref{P:doublecosetreps} yields 
$X_{x_1,y}^+ = X_+ \cup \{\beta^\vee + 3nx_2 \mid n\geq 0\}$ and $X_{x_2,y}^+ = X_+ \cup \{\alpha^\vee + 3nx_1 \mid n\geq 0\}$.  Theorem~\ref{T:depth} implies that the irreducible subrepresentations of $\pi^i_0$ all have depth exactly $1$, and for any other $\ell \in X_{x_i,y}^+$, the depth of any irreducible subrepresentation of $\pi^i_\ell$ is at least $2$.   

In the following paragraphs, we determine the decomposition into irreducible subrepresentations of $\pi^i_\ell$, for $(i,\ell)\in \{(0,\alpha^\vee+\beta^\vee), (1,0), (2,0)\}$.  We obtain three distinct irreducible depth-one representations of $\SL(3,\R)$; we show they occur among (but do not exhaust) the depth-one irreducible representations occuring in the branching rules of the unramified principal series.  

Let $G_{abc}$, for $0\leq a,b \leq c \leq a+b$, denote the subgroup which is the intersection with $G$ of the set of matrices of the form $\smat{\R & \R & \R \\ \PP^a & \R & \R \\ \PP^c & \PP^b & \R}$.  Then following \cite{CampbellNevins2009}, $\Ind_{G_{abc}}^{G_y}1$ is a $G_y$-subrepresentation of the unramified principal series, which we denote $V_{abc}$.  The quotient of $V_{abc}$ by all the $V_{a'b'c'}$ it properly contains is denoted $W_{abc}$, which may or may not be irreducible; its decomposition into irreducible subrepresentations is given in \cite{CampbellNevins2009} and \cite{OnnSingla2014}.  Finally, recall that Green's function $Q_\T^\G$ is given by \cite{Green1955} $Q_\T^\G(1)= (q-1)(q^2-1) = \deg(\tau)$ and
$$
Q_{\T}^{\G}(u) = \begin{cases}
1-q & \textrm{if $\textrm{rank}(u-1) =1$;}\\
1 & \textrm{if $\textrm{rank}(u-1) = 2$.}
\end{cases}
$$

First let $(i,\ell)=(1,0)$; note that $0\in \partial(X_{x_1,y}^+)$.  We compute that $G_{[0,x_1]} = G_{101}$, which has index $q^2+q+1$ in $G_0$.
We deduce from Proposition~\ref{P:degree} that the degree of $\pi^1_0$ is $\deg(\lconj{g}{\tau})[G_0:G_{101}]=(q^2-1)(q^3-1)$.   

Using the character formula \eqref{E:charDL} for $\tau$, one can check that 
$\dim(\Hom_{G_{101}}(\lconj{g}{\tau},\lconj{g}{\tau}))=1$.  
To show further that $\pi^1_0=\Ind_{G_{101}}^{G_y}\lconj{g}{\tau}$ is irreducible,  we consider
$$
\Hom_{G_y}(\pi^1_0,\pi^1_0)\cong \oplus_{\gamma \in G_{101}\backslash G_y/G_{101}}\Hom_{G_{101}\cap\lconj{\gamma}{G_{101}}}(\lconj{g}{\tau},\lconj{\gamma g}{\tau}).
$$
Lists of representatives for such double cosets are given in \cite{CampbellNevins2009}; in this case there are two.  Applying character formulas we may again compute directly that this space of self-intertwining operators
is one-dimensional.  

Now let us consider the intertwining of $\pi^1_0$ with a principal series representation.  Since $Z(\G_0)=\{1\}$, the compatibility condition of Theorem~\ref{T:main} holds for any depth zero character of $S_0$, so without loss of generality let $\chi=1$ and consider the unramified principal series $V=\Ind_B^G 1$.  Using \eqref{E:charDL} we can determine that 
$$
\dim(\Hom_{G_{101}\cap G_{212}}(\lconj{g}{\tau},1))=1
$$
and deduce that $\pi^1_0$ intertwines nontrivially with $V_{212} = \Ind_{G_{212}}^{G_0} 1$.  By dimension arguments, or else a direct computation to rule out intertwining with any subquotient $V_{abc}$ of $V_{212}$, we conclude that $\pi^1_0 \cong W_{212}$, which is irreducible \cite{CampbellNevins2009}.  
A symmetric analysis yields $\pi^2_0 \cong W_{122}$, which is distinct from $W_{212}$.

The case for $(i,\ell)=(0,\alpha^\vee+\beta^\vee)$ is more interesting.
Note that $G_{[0,x_0+\ell]}=G_{112}$.  
As above, we determine that $\deg(\pi^0_\ell)=q(q+1)(q^2-1)(q^3-1)$ using Proposition~\ref{P:degree}.

By Remark~\ref{R:self} and as computed in Example~\ref{Example:SL3self}, the intertwining number of $\Res_{\B^{op}}\tau$ with itself is $q$.  To decompose $\Res_{\B^{op}}\tau$ into irreducible subrepresentations, we begin by restricting $\tau$ to the unipotent radical $\U^{op}$ of $\B^{op}$,  which is simply a Heisenberg group over $\resk$ with center $\GG_{-\alpha-\beta}(\resk)$.  Using character computations, one determines that the restriction of $\tau$ to $\U^{op}$ consists of $(q-1)$ copies of each of the $(q-1)$ distinct Stone-Von Neumann representations $H_{\psi}$ (corresponding to the nontrivial characters $\psi$ of $\resk$) together with $(q-1)^2$ characters of $\U^{op}$.  (These characters correspond to the characters $\psi_{-\alpha}\otimes \psi_{-\beta}\otimes 1$ of $\GG_{-\alpha}(\resk)\times \GG_{-\beta}(\resk)\times \GG_{-\alpha-\beta}(\resk)$ where neither $\psi_{-\alpha}$ nor $\psi_{-\beta}$ is trivial.)  

It is then straightforward to determine that $\Res_{\B^{op}}\tau$ decomposes as $q$ distinct irreducible representations: for each of the $q-1$ nontrivial characters $\psi$, the representation $\rho_{\psi} = \Ind_{\U^{op}}^{\B^{op}} H_{\psi}$ of degree $q(q-1)$;  and the representation $\rho_1 = \Ind_{\U^{op}}^{\B^{op}} \psi_{-\alpha}\otimes \psi_{-\beta}\otimes 1$, of degree $(q-1)^2$, where $\psi_{-\alpha}$ and $\psi_{-\beta}$ are any pair of nontrivial characters.

We obtain a corresponding decomposition 
$\pi^0_\ell = \oplus_{\psi}\rho_{\psi}'$, where $\rho_{\psi}':=\Ind_{G_{112}}^{G_y}\lconj{t(\ell)}{\rho_\psi}$.  One can show that each $\rho_{\psi}'$ is irreducible by computing directly that of the seven double cosets of $G_{112}$ in $G_y$, only the trivial one supports nonzero intertwining operators.

Now let us consider the intertwining of $\pi^0_\ell$ with a principal series representation.  Set ${\Omega_3} = \overline{D} \cap \Omega_0(\A,3)$.  By Proposition~\ref{P:depthps} and the remarks following, $V_{\Omega_3}=V^{G_{0,3}}$.  Since $V_{\Omega_3}=V_{333}$, and since all $G_y$-subrepresentations of $V$ of depth at most $2$ lie in $V^{G_{0,3}}$, we deduce that all intertwining of $\pi^0_\ell$ with $V$ occurs with the $q^6(q^2+q+1)(q+1)$-dimensional subrepresentation $V_{333}$.

By the proof of Proposition~\ref{P:inttwine}, the intertwining number of $\pi^0_\ell$ with $V_{333}$ is at least $\deg(\tau)/\vert \Sres \vert = q+1$, which suggests the possibility that $\pi^0_\ell$ can be embedded into $V$ as a subrepresentation.  This is in fact the case, as follows.

Using the same arguments as in the proof of Proposition~\ref{P:inttwine}, one can show directly that the irreducible representation $\rho_1'$ intertwines already with $V_{222}$; in particular this implies that $\rho_1'$ has depth $1$, which follows readily from its construction.  Furthermore, this intertwining occurs for no larger subgroup than $G_{222}$, whence $\rho_1'$ lives in the highest-dimensional quotient $W_{222}$.  By \cite{CampbellNevins2009}, this component is irreducible of dimension $q(q^2-1)(q^3-1)$ (whence another proof of the irreducibility of $\rho_1'$) and occurs in $V$ with multiplicity $1$.

This leaves $q$ independent intertwining maps of $\rho'=\oplus_{\psi \neq 1}\rho_{\psi}'$ with $V_{333}$.  By computing the intertwining of $\rho'$ with the subrepresentations $V_{223}$ and $V_{222}$ of $V_{333}$, we isolate their image to  $V_{223}/V_{222} = W_{123}\oplus W_{223} \oplus W_{213}$.  In \cite{CampbellNevins2009} it is shown that $W_{123}\cong W_{213}$ are isomorphic and irreducible of dimension $q^2(q+1)(q^3-1)$ and in \cite{OnnSingla2014} it is shown that $W_{223}$ decomposes into a direct sum of $q-2$ distinct irreducible representations of this same degree, each distinct from $W_{123}$.  
We deduce that $\rho'$ coincides with $W_{123}\oplus W_{223} \cong W_{213}\oplus W_{223}\subseteq V$.  

Consequently (\emph{cf.} Corollary~\ref{C:subreps}) $\pi^0_\ell$ embeds in $V$ (nonuniquely!); in fact, we have the stronger result that
$$
\pi^0_\ell \oplus \pi^1_0 \oplus \pi^2_0 \cong (W_{222}\oplus W_{123}\oplus W_{223})\oplus W_{212} \oplus W_{122} \subseteq V.
$$
However, $V$ contains additional $G_y$-subrepresentations of depth $1$, namely $W_{202}\cong W_{112} \cong W_{022}$, which by our analysis do not occur in the restriction of any depth-zero supercuspidal representation of $G$.

\providecommand{\bysame}{\leavevmode\hbox to3em{\hrulefill}\thinspace}
\providecommand{\MR}{\relax\ifhmode\unskip\space\fi MR }
\providecommand{\MRhref}[2]{%
  \href{http://www.ams.org/mathscinet-getitem?mr=#1}{#2}
}
\providecommand{\href}[2]{#2}


\begin{thebibliography}{10}

\bibitem{BarbaschMoy1997}
Dan Barbasch and Allen Moy, \emph{Local character expansions}, Ann. scient. \'Ec. Norm. Sup. (1997), no.~30, 553--567.

\bibitem{BruhatTits1972}
F.~Bruhat and J.~Tits, \emph{Groupes r\'eductifs sur un corps local}, Inst.
  Hautes \'Etudes Sci. Publ. Math. (1972), no.~41, 5--251.

\bibitem{CampbellNevins2009}
Peter~S. Campbell and Monica Nevins, \emph{Branching rules for unramified
  principal series representations of {${\rm GL}(3)$} over a {$p$}-adic field},
  J. Algebra \textbf{321} (2009), no.~9, 2422--2444.

\bibitem{CampbellNevins2010}
\bysame, \emph{Branching rules for ramified principal series representations of
  {GL}(3) over a {$p$}-adic field}, Canad. J. Math. \textbf{62} (2010), no.~1,
  34--51.

\bibitem{Carter1985}
Roger~W. Carter, \emph{Finite groups of {L}ie type}, Wiley Classics Library,
  John Wiley \& Sons Ltd., Chichester, 1993, Conjugacy classes and complex
  characters, Reprint of the 1985 original, A Wiley-Interscience Publication.

\bibitem{CHEVIE}
Meinolf Geck, Gerhard Hiss, Frank L{\"u}beck, Gunter Malle, and G{\"o}tz
  Pfeiffer, \emph{C{HEVIE}---a system for computing and processing generic
  character tables}, Appl. Algebra Engrg. Comm. Comput. \textbf{7} (1996),
  no.~3, 175--210, Computational methods in Lie theory (Essen, 1994).

\bibitem{Green1955}
J.A.~Green, \emph{The characters of the finite general linear groups}, Trans. Amer. Math. Soc. \textbf{80} (1955), 402--447.

\bibitem{Kutzko1977}
P.~C. Kutzko, \emph{Mackey's theorem for nonunitary representations}, Proc.
  Amer. Math. Soc. \textbf{64} (1977), no.~1, 173--175.



\bibitem{Morris1990}
Lawrence Morris, \emph{Some tamely ramified supercuspidal representations of symplectic groups}, Proc. London Math. Soc. (3) \textbf{63} (1991), 519--551.

\bibitem{Morris1999}
Lawrence Morris, \emph{Level zero {$\bf G$}-types}, Compositio Math.
  \textbf{118} (1999), no.~2, 135--157.

\bibitem{MoyPrasad1994}
Allen Moy and Gopal Prasad, \emph{Unrefined minimal {$K$}-types for {$p$}-adic
  groups}, Invent. Math. \textbf{116} (1994), no.~1-3, 393--408.

\bibitem{MoyPrasad1996}
\bysame, \emph{Jacquet functors and unrefined minimal {$K$}-types}, Comment.
  Math. Helv. \textbf{71} (1996), no.~1, 98--121.

\bibitem{Nevins2005}
Monica Nevins, \emph{Branching rules for principal series representations of
  {${\rm SL}(2)$} over a {$p$}-adic field}, Canad. J. Math. \textbf{57} (2005),
  no.~3, 648--672.

\bibitem{Nevins2011}
\bysame, \emph{Patterns in branching rules for irreducible representations of
  {${\rm SL}_2(k)$}, for {$k$} a {$p$}-adic field}, Harmonic analysis on
  reductive, {$p$}-adic groups, Contemp. Math., vol. 543, Amer. Math. Soc.,
  Providence, RI, 2011, pp.~185--199.

\bibitem{Nevins2013}
\bysame, \emph{Branching rules for supercuspidal representations of
  {$SL_2(k)$}, for {$k$} a {$p$}-adic field}, J. Algebra \textbf{377} (2013),
  204--231.

\bibitem{OnnPrasadVaserstein2006}
Uri Onn, Amritanshu Prasad, and Leonid Vaserstein, \emph{A note on {B}ruhat
  decomposition of {${\rm GL}(n)$} over local principal ideal rings}, Comm.
  Algebra \textbf{34} (2006), no.~11, 4119--4130.

\bibitem{OnnSingla2014}
Uri Onn and Pooja Singla, \emph{On the unramified principal series of
  $\mathrm{GL}(3)$ over non-archimedean local fields}, J. Algebra \textbf{397} (2014), 1--17.



\bibitem{Reeder2011}
Mark Reeder, \emph{Elliptic centralizers in Weyl groups and their coinvariant representations}, Represent. Theory \textbf{15} (2011), 63--111.

\bibitem{Savin1996}
Gordan Savin, \emph{$K$-types of minimal representations ($p$-adic case)}, Glasnik Matematicki, \textbf{31}(\textbf{51}) (1996), 93--99.

\bibitem{Springer1970}
T.~A. Springer, \emph{Characters of special groups}, Seminar on {A}lgebraic
  {G}roups and {R}elated {F}inite {G}roups ({T}he {I}nstitute for {A}dvanced
  {S}tudy, {P}rinceton, {N}.{J}., 1968/69), Lecture Notes in Mathematics, Vol.
  131, Springer, Berlin, 1970, pp.~121--166.

\bibitem{SpringerSteinberg1970}
T.~A. Springer and R.~Steinberg, \emph{Conjugacy classes}, Seminar on
  {A}lgebraic {G}roups and {R}elated {F}inite {G}roups ({T}he {I}nstitute for
  {A}dvanced {S}tudy, {P}rinceton, {N}.{J}., 1968/69), Lecture Notes in
  Mathematics, Vol. 131, Springer, Berlin, 1970, pp.~167--266.

\bibitem{Srinivasan1968}
Bhama Srinivasan, \emph{The characters of the finite symplectic group {${\rm
  Sp}(4,\,q)$}}, Trans. Amer. Math. Soc. \textbf{131} (1968), 488--525.

\bibitem{Tits1979}
J.~Tits, \emph{Reductive groups over local fields}, Automorphic forms,
  representations and {$L$}-functions ({O}regon {S}tate {U}niv., {C}orvallis,
  {O}re., 1977), {P}art 1, Proc. Sympos. Pure Math., XXXIII, Amer. Math. Soc.,
  Providence, R.I., 1979, pp.~29--69.

\end{thebibliography}
\end{document}